\documentclass[11pt,letterpaper]{amsart}
\usepackage{amssymb}
\usepackage{amsfonts}
\usepackage{amsmath}
\usepackage{graphicx}
\usepackage{xcolor}
\setcounter{MaxMatrixCols}{30}
\setcounter{page}{1}
\usepackage{amsmath}
\usepackage{mathrsfs}
\usepackage{stmaryrd}
\usepackage{epsfig,color}
\usepackage{blindtext}
\usepackage{enumerate}
\usepackage[citecolor=blue,colorlinks]{hyperref}
\usepackage{tikz-cd}

\usepackage{url}
\usepackage{bbm}
\usepackage{nicefrac,mathtools}
\usepackage{bm}   
\DeclareGraphicsExtensions{.pdf,.jpeg,.png}
\usepackage{epstopdf}
\usepackage{cancel} %for editing
\usepackage[normalem]{ulem} %for editing
\usepackage{verbatim} %for comment
\usepackage{enumitem} % for alph* enumerate item
\pagestyle{plain}
\usepackage{soul}
\usepackage{color}

\usepackage{footnote}
\usepackage[msc-links, lite]{amsrefs}

\usepackage{geometry}
\geometry{left=2.80cm,right=2.8cm,top=3.5cm,bottom=3.2cm}

\setcounter{section}{0}

\newtheorem{theorem}{Theorem}[section]

\newtheorem{proposition}[theorem]{Proposition}
\newtheorem{lemma}[theorem]{Lemma}
\newtheorem{corollary}[theorem]{Corollary}
\newtheorem{question}[theorem]{Question}
\newtheorem{example}[theorem]{Example}

\theoremstyle{definition}
\newtheorem{definition}[theorem]{Definition}
\newtheorem{remark}[theorem]{Remark}
\numberwithin{equation}{section}

\newcommand{\mb}{\mathbb}
\newcommand{\mc}{\mathcal}

\newcommand{\eps}{\varepsilon}

\newcommand{\Sc}{\mathrm{Sc}}
\newcommand{\mr}{\mathrm}

\DeclareMathOperator{\Ric}{Ric}

\newcommand{\vol}{\mathrm{vol}}

% sets

\newcommand{\N}{\mathbb{N}}

\newcommand{\R}{\mathbb{R}}
\renewcommand{\subset}{\subseteq}
\newcommand{\defeq}{\mathrel{\mathop:}=}

% measures
\newcommand{\haus}{\mathcal{H}}
\newcommand{\leb}{\mathcal{L}}

\newcommand{\dist}{\mathsf{d}}

\newcommand{\meas}{\mathfrak{m}}

% Spaces of functions and derivations

\DeclareMathOperator{\RCD}{RCD}
\DeclareMathOperator{\ncRCD}{ncRCD}

%horizontal dash integral 
\def\Xint#1{\mathchoice
{\XXint\displaystyle\textstyle{#1}}%
{\XXint\textstyle\scriptstyle{#1}}%
{\XXint\scriptstyle\scriptscriptstyle{#1}}%
{\XXint\scriptscriptstyle\scriptscriptstyle{#1}}%
\!\int}
\def\XXint#1#2#3{{\setbox0=\hbox{$#1{#2#3}{\int}$ }
\vcenter{\hbox{$#2#3$ }}\kern-.6\wd0}}

\def\dashint{\Xint-}

\title{ Two-dimension vanishing, splitting and positive scalar curvature}
\author{Xingyu Zhu}\address{Institut f\"ur Agewandte Mathematik, Universit\"at Bonn, Bonn, Germany}\email{zhu@iam.uni-bonn.de}
%\date{April 2023}

 \subjclass[2020]{Primary 53C21, 53C23.\\
\indent key words: positive scalar curvature, splitting, first Betti number, dimension}

\begin{document}
\maketitle

\begin{abstract}
We prove several analogs of Gromov's macroscopic dimension conjecture with extra curvature assumptions. More explicitly, we show that for an open Riemannian $n$-manifold $(M,g)$ of nonnegative Ricci (resp. sectional) curvature, if it has uniformly positive scalar curvature and it is uniformly volume noncollapsed, then the essential (resp. Hausdorff) dimension of an asymptotic cone, as a notion of largeness, has a sharp upper bound $n-2$, which is $2$ less than the upper bound for an open Riemannian manifold with only nonnegative Ricci curvature. As a consequence, the dimension of space of linear growth harmonic functions of $M$ has upper bound $n-1$ which is also $2$ less than the sharp bound $n+1$ when $M$ only has nonnegative Ricci curvature. We also prove the first Betti number upper bound is $n-2$ if $M$ is compact, and $n-3$ if $M$ is non-compact. When $M$ is compact we show a fibration theorem over torus, and a rigidity theorem for the fiber when the first Betti number upper bound is achieved.

\end{abstract}

\section{Introduction}\label{sec:intro}
Gromov's macroscopic dimension conjecture roughly states that an open manifold of dimension $n$, $n\ge 3$, with uniformly positive scalar curvature has ``dimension at large'' at most $n-2$. Here the largeness can be interpreted precisely in several equivalent ways, see \cites{Cai_Largemanifold,Shen_Largemanifold}. We refer the readers to \cite{Gromov_four_lectures} for a comprehensive exposition of the geometry of scalar curvature.

The principle or heuristic behind Gormov's conjecture can be described as that uniformly positive scalar curvature implies the vanishing of $2$ dimensions in large scales, where the dimension is a vague notion to be interpreted properly. Instead of the macroscopic dimension, some other types of quantities serving as notions of largeness can be considered. Then we can ask 
\begin{question}\label{ques:main}
    What are the quantities as notions of largeness whose upper bound become $2$ less when imposing a uniform positive scalar curvature lower bound?
\end{question}
 We consider this question under a priori Ricci or sectional curvature lower bound $0$. In such a situation, polynomial growth order of the volume of geodesic balls is a candidate, which is a conjecture of Gromov and is considered in \cites{Zhu_Geometryofpsc, MW_geometryofpsc, Chodosh_Li_Soapbubble} when the dimension is $3$. We follow this line of thoughts and include also Ricci limit spaces into our considerations, since we have nice stability of lower Ricci or sectional curvature bounds under pointed (measured) Gromov-Hausdorff convergence. Inspired and guided by Gromov's principle, in a recent preprint \cite{WZZZ_PSC_RLS}, Wang-Xie-Zhu and the author proved a theorem for Ricci limit spaces as another possible answer to the above Question \ref{ques:main}, where the notion of largeness is chosen to be the number of lines split from a Ricci limit space. In particular, with the crucial use of the torical band estimates \cite{Gromov_metric_inequality}, we showed 

\begin{theorem}[\cite{WZZZ_PSC_RLS}*{Theorem 1.1}]\label{thm:n-2splitting}
    Let $(M_i,g_i,p_i)$ be a sequence of pointed open Riemannian $n$-manifolds of nonnegative Ricci curvature. If $\Sc_{g_i}\ge K>0$ and $\vol_{g_i}(B_1(p_i))>v>0$, then any pointed Gromov-Hausdorff limit of this sequence can split at most $\R^{n-2}$. Moreover, if the maximal splitting happens, the non-splitting factor is either $\mb S^2$ or $\R P^2$ with a metric of nonnegative curvature in the sense of Alexandrov, which is compact. 
\end{theorem}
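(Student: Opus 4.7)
The plan is to argue by contradiction and reduce, via Cheeger--Colding's almost-splitting theorem, to a situation ruled out by Gromov's torical band inequality (the main technical input flagged in the introduction). Assume that some pointed GH limit $(X,\dist,p_\infty)$ of the sequence splits $\R^{n-1}$. Since the sequence is non-collapsed with $\Ric\ge 0$, the limit $X$ is an $n$-dimensional non-collapsed Ricci limit, so necessarily $X=\R^{n-1}\times Y$ with $Y$ a complete $1$-dimensional length space; the classification forces $Y\in\{\R,\,[0,\infty),\,[0,\ell],\,\mb S^1\}$, and in every case $Y$ contains a genuine interval of some length $\ell_0>0$. By Cheeger--Colding's almost-splitting theorem, for every $\eps>0$ and every $R>0$, for $i$ large enough there exist harmonic $\eps$-splitting maps $\mf u^{(i)}\colon B_R(p_i)\to\R^{n-1}$ whose gradients form a nearly orthonormal frame and whose Hessians are small in $L^2$, together with an almost-distance coordinate along the $Y$-direction. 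This realizes the splitting analytically on balls of arbitrarily large radius.

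The heart of the argument is the next step, where the analytic splitting must be converted into a torical band in $M_i$. Using $\mf u^{(i)}$ together with the uniform volume non-collapsing, one extracts, inside a suitable preimage of a large lattice cube in $\R^{n-1}$ crossed with the $Y$-direction, a compact $n$-dimensional submanifold $W_i\subset M_i$ diffeomorphic to $N_i\times[0,L_i]$ for some closed orientable $(n-1)$-manifold $N_i$ admitting a degree-one map to $T^{n-1}$, with $L_i\to\infty$ as $R\to\infty$. Once such a torical band is in place, Gromov's torical band inequality \cite{Gromov_metric_inequality} yields $L_i\le C(n)/\sqrt{K}$, contradicting $L_i\to\infty$, and proving that no GH-limit can split $\R^{n-1}$. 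This toricization step is the chief obstacle: one must combine quantitative level-set regularity for $\eps$-splitting maps with a topological argument (using the almost-orthonormal frame to identify opposite faces of the cube on the level of cobordism) to manufacture a genuine torical band with the required degree.

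For the rigidity statement, assume the maximal splitting $X=\R^{n-2}\times Y$ is attained, so $Y$ is a non-collapsed $2$-dimensional nonnegative Ricci limit. In dimension two nonnegative Ricci coincides with nonnegative sectional curvature, so $Y$ is a $2$-dimensional Alexandrov space of nonnegative curvature. If $Y$ contained a line, Toponogov splitting would give $Y=\R\times Y'$, hence a splitting of $\R^{n-1}$ in $X$, contradicting the first part already established; so $Y$ has no line. Combining this with the $2$-dimensional Alexandrov classification and the fact that the scalar curvature lower bound $\Sc\ge K$ is inherited on the Cheeger--Colding regular part of $Y$, all non-compact possibilities (planes, half-planes, infinite cylinders, strips, and flat cones---the latter being flat away from the apex) are excluded, leaving only closed Alexandrov surfaces. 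Finally, among $\mb S^2$, $\R P^2$, $T^2$, and the Klein bottle, the last two have $\chi=0$, so the Gauss--Bonnet theorem applied in the Alexandrov sense gives $\int_Y \Sc\,dA=4\pi\chi(Y)=0$, which is incompatible with $\Sc\ge K>0$ on a set of positive measure; hence $Y\in\{\mb S^2,\R P^2\}$ as claimed.
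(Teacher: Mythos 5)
First, note that this theorem is imported from \cite{WZZZ_PSC_RLS} and is not reproved in the present paper, so the only comparison available is with the strategy the introduction attributes to that work (contradiction, splitting maps, and Gromov's band estimates). Your high-level plan matches that description, but the write-up has two genuine gaps. The decisive one is the ``toricization'' step, which you yourself flag as the chief obstacle and then do not carry out; moreover, as formulated it cannot work uniformly. If the limit is $\R^{n-1}\times Y$, the classification of the one-dimensional factor gives $Y\in\{\R,[0,\infty),[0,\ell],\mb S^1\}$, and the band you posit --- a region $W_i$ diffeomorphic to $N_i\times[0,L_i]$ with $N_i$ \emph{closed} and admitting a degree-one map to $\mb T^{n-1}$, with the width $L_i\to\infty$ along a split direction --- cannot exist in most of these cases. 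Since the split directions are noncompact in $M_i$, a closed ``fiber'' must wrap the $Y$-direction; this is only conceivable when $Y=\mb S^1$ (and even then one needs a Reifenberg/fibration-type argument to produce the circle-valued coordinate, not just level-set regularity of $\eps$-splitting maps). When $Y=\R$ the limit is $\R^n$ and no torical band with large width is available at all --- there one must use a different estimate from \cite{Gromov_metric_inequality} (the cube-type inequality applied to the full $n$-tuple of splitting coordinates). When $Y=[0,\ell]$ or $[0,\infty)$ the $Y$-direction is an arc and cannot be closed up either; these cases have to be excluded separately, e.g.\ because points over $\partial Y$ would be codimension-one singular points, which noncollapsed Ricci limits of boundaryless manifolds do not have. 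None of this case analysis appears in your proposal, so the contradiction is not actually reached.

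The rigidity part has a second gap of a different nature: you exclude noncompact and flat cross-sections by asserting that $\Sc\ge K$ is ``inherited on the regular part of $Y$'' and then invoking flatness or Gauss--Bonnet. Pointwise scalar curvature lower bounds are not known to pass to Gromov--Hausdorff limits (the limit's regular set is not smooth, and even the theorem's statement carefully claims only nonnegative Alexandrov curvature for the factor, not $\Sc\ge K$). In addition, your list of noncompact possibilities is incomplete: a nonnegatively curved surface with no line need not be flat or conical --- paraboloids and capped half-cylinders are not excluded by the no-line argument nor by any flatness argument, so compactness of $Y$ is not established. The exclusions of noncompact $Y$ and of $\mb T^2$/Klein bottle must again be run inside the manifolds $M_i$ (long cylindrical or flat regions of $\R^{n-2}\times Y$ produce arbitrarily wide torical bands, contradicting the width inequality), not on the limit via Gauss--Bonnet. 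As it stands, both halves of the argument rest on constructions or curvature-inheritance claims that are not proved and, in several cases, not true in the stated form.
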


 As a by product of Theorem \ref{thm:n-2splitting} above, it is also shown in \cite{WZZZ_PSC_RLS}*{Theorem 1.9} that if $(M,g)$ is an open $n$-manifold of nonnegative Ricci curvature and $\Sc_g \ge K>0$, then for any $p\in M$, its asymptotic volume ratio is zero, i.e.,
\begin{equation}\label{eq:AVR0}
  \lim_{r\to\infty}  \frac{\vol_g(B_r(p))}{r^n}=0.
\end{equation}
 This means that any asymptotic cone must collapse in the presence of a positive scalar curvature. It is then natural to wonder if it must collapse at least $2$ dimensions, since the dimension of an asymptotic cone also describes the largeness of a manifold. Having this idea in mind, we proceed to the main results. 
 
 In this note, our focus is to provide some quantities as candidates of answers to Question \ref{ques:main} and also as applications of Theorem \ref{thm:n-2splitting}. More explicitly, we prove that some dimension upper bound for an open manifold of nonnegative Ricci curvature becomes $2$ less if this manifold also has positive scalar curvature. Here, the dimension is interpreted as the dimension of the space of linear growth harmonic functions in Theorem \ref{thm:dim_n-1}, as the essential or Hausdorff dimension of an asymptotic cone as a Ricci limit space or an Alexandrov space in Theorem \ref{thm:codim2collapseRic} and Theorem \ref{thm:codim2collapse}. The new challenge is that we need to tackle the collapsed asymptotic cones where Theorem \ref{thm:n-2splitting} is not directly available. 
 
 To state our first main theorem we fix some notations. For a Riemannian manifold $(M,g)$ let $h_1(M)$ be the dimension of the space of linear growth harmonic functions and $b_1(M)$ be the first Betti number of $M$. %Our first main theorem is an application of singular space theory to manifolds.

 \begin{theorem}\label{thm:dim_n-1}
     Let $(M,g)$ be an open Riemannian $n$-manifold of nonnegative Ricci curvature. If $\Sc_g \ge K>0$ and $\vol_{g}(B_1(x))\ge v>0$ for every $x\in M$, then $h_1+b_1\le n-1$ and $b_1(M)\le n-3$. %then the dimension of the space of linear growth harmonic functions is at most $n-1$.
 \end{theorem}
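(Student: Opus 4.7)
The plan is to pass to the maximal torsion-free abelian cover $\pi:M'\to M$ and analyze the asymptotic cone of this cover by combining the $\R^{b_1}$-splitting induced by the rescaled deck action with the $\R^{h_1-1}$-splitting coming from linear-growth harmonic functions via the Cheeger--Colding--Minicozzi correspondence. The total splitting is then controlled by the essential dimension bound for asymptotic cones established earlier in Theorem \ref{thm:codim2collapseRic}, which is the key input that lets us work past the obstruction that the asymptotic cone here is collapsed (recall $\mathrm{AVR}=0$ by \eqref{eq:AVR0}).

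Let $\pi:M'\to M$ be the Galois cover with deck group $\mathbb{Z}^{b_1(M)}$ corresponding to the torsion-free quotient of $H_1(M;\mathbb{Z})$. Since $\pi$ is a local isometry with $\pi(B_1(x))=B_1(\pi(x))$ for every $x\in M'$, the cover inherits nonnegative Ricci curvature, $\Sc_{g'}\ge K$, and $\vol_{g'}(B_1(x))\ge v$, so Theorem \ref{thm:codim2collapseRic} also applies to $M'$. I would pick, via a diagonal argument, a single sequence of rescalings $\epsilon_i\to 0$ that produces simultaneously a pointed asymptotic cone $M'_\infty$ of $M'$ and a compatible asymptotic cone $M_\infty$ of $M$ in which the Colding--Minicozzi correspondence upgrades the $h_1(M)-1$ non-constant linear-growth harmonic functions on $M$ to $h_1(M)-1$ independent parallel affine functions on $M_\infty$. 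By equivariant Gromov--Hausdorff convergence, the $\mathbb{Z}^{b_1}$-action on $M'$ rescales to an isometric $\R^{b_1}$-action on $M'_\infty$ whose orbits are lines (since $M'_\infty$ is nonnegatively Ricci-curved in the RCD sense), and iterated application of the Cheeger--Gromoll--Gigli splitting theorem yields $M'_\infty=\R^{b_1(M)}\times Z$ with the action being translation on the first factor and $Z=M'_\infty/\R^{b_1(M)}=M_\infty$.

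Applying the splitting theorem again to the $h_1(M)-1$ parallel functions on $Z=M_\infty$ decomposes $Z=\R^{h_1(M)-1}\times W$, so $M'_\infty=\R^{h_1(M)+b_1(M)-1}\times W$. Theorem \ref{thm:codim2collapseRic} gives $\dim_{\mathrm{ess}}M'_\infty\le n-2$, and since essential dimension is additive across Euclidean splittings of RCD spaces, $h_1(M)+b_1(M)-1\le n-2$, which is the first inequality. For the second, suppose for contradiction that $b_1(M)=n-2$; the first inequality then forces $h_1(M)=1$, so the decomposition reduces to $M'_\infty=\R^{n-2}\times W$ with $\dim_{\mathrm{ess}}W\le 0$. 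A connected RCD$(0,N)$ limit space of essential dimension zero is a single point, so $M_\infty=W=\{\mathrm{pt}\}$. On the other hand, $M$ is complete and noncompact, hence contains points at arbitrarily large distance from the base point, and rescaling these by $\epsilon_i$ produces points at every positive distance from the limit base point, forcing $M_\infty$ to be unbounded---a contradiction. Hence $b_1(M)\le n-3$.

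The hardest step, I expect, is justifying the identification $M'_\infty=\R^{b_1(M)}\times M_\infty$ rigorously when the deck action on $M'$ is \emph{not} cocompact (the relevant case here, since $M$ is noncompact): one must show that the discrete $\mathbb{Z}^{b_1}$-equivariance passes to a genuine isometric $\R^{b_1}$-action on the limit, that each generator of this limit action produces a line in $M'_\infty$ so that the splitting theorem applies, and that the pointed GH limit commutes with the group quotient so that the transverse factor is precisely $M_\infty$. The simultaneous realization of the equivariant splitting of $M'_\infty$ and the Colding--Minicozzi splitting of $M_\infty$ along a common sequence of scales is a secondary technical point handled by a standard diagonal argument.
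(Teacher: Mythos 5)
Your overall strategy (pass to the abelian cover, combine the $\R^{b_1}$ coming from the rescaled deck action with the $\R^{h_1-1}$ from Theorem \ref{thm:AsymSplitting}, and play the total against the essential-dimension bound of Theorem \ref{thm:codim2collapseRic}) is the same skeleton as the paper's argument, but the step you yourself flag as the hardest is a genuine gap, and your proposed justification for it fails. You claim that the isometric $\R^{b_1}$-action on $M'_\infty$ obtained by equivariant convergence has orbits that are lines ``since $M'_\infty$ is nonnegatively Ricci-curved,'' and that iterating the splitting theorem then yields $M'_\infty=\R^{b_1}\times Z$ with translational action. Nonnegative curvature does not make orbits of a one-parameter isometry group geodesic (screw motions on $\R\times Y$ already have helical orbits), and there is no splitting theorem for an $\RCD(0,N)$ space merely admitting a closed $\R^{b}$ subgroup of isometries; the asymptotic cone of the cover need not split $\R^{b_1}$ at all. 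Since both inequalities in your write-up are derived from this splitting (the second also via the identification $Z=M_\infty$, which in addition quotients by $\R^{b_1}$ rather than by the full limit group), the argument as written does not close.

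The paper circumvents exactly this obstruction: from Pan--Ye (Propositions \ref{prop:blowdown} and \ref{prop:blowup}) one only extracts that the limit group contains a \emph{closed} $\R^{b}$ subgroup and that its \emph{orbit has topological dimension at least} $b$ --- no splitting of the blow-down is asserted. One then blows up at a regular point of the blow-down $\R^{h-1}\times\overline Y$ (the $\R^{h-1}$ factor is present because the linear-growth harmonic functions are $\Gamma$-invariant and lift), obtaining a tangent cone $\R^{h-1}\times\R^{k}$ on which the limit group still contains a closed $\R^{b}$ acting only in the $\R^{k}$ directions; the orbit through the basepoint then forces $k\ge b$, and lower semicontinuity of the essential dimension together with Theorem \ref{thm:codim2collapseRic} gives $h-1+b\le n-2$. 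The bound $b_1\le n-3$ is proved by a second equivariant blow-down/blow-up of the same kind (or, alternatively, by Anderson's orbit-counting argument combined with the volume growth estimate of \cite{WZZZ_PSC_RLS}), not by the ``$W$ is a point versus $M_\infty$ unbounded'' dichotomy, which again presupposes the unproved splitting. If you want to salvage your outline, replace the claimed splitting of $M'_\infty$ by this orbit-dimension-at-a-regular-point mechanism.
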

 
 It is well-known (\cites{LiTamLinear,hua2016harmonic}) that for an open Riemannian $n$-manifold of nonnegative Ricci curvature, the dimension of linear growth harmonic functions is bounded above by $n+1$, which is closely tied to the splitting in asymptotic cones, discovered by Cheeger--Colding--Minicozzi \cite{CheegerColdingMinicozzi}, see section \ref{subsec:AsymCone}. We see that $2$ dimensions vanish because of the uniformly positive scalar curvature. The equality can be achieved by $\R^{n-2}\times\mb S^2$. %This is also an example where the theory of singular spaces can be utilized to prove sharp results for manifolds that are not obvious by smooth techniques. 

 \begin{remark}
     Theorem \ref{thm:dim_n-1} is strengthened thanks to Pan--Ye that appears when this note is under review.
 \end{remark}
 
 \begin{question}
 Is Theorem \ref{thm:dim_n-1} true without the uniformly volume noncollapsed assumption,\ i.e. $\vol_g(B_1(x))>v>0$ for any $x\in M$? Is $\R^{n-2}\times\mb S^2$ the only space, up to diffeomorphism, that achieves the upper bound $n-1$? 
 \end{question}
 
  Theorem \ref{thm:dim_n-1} will actually be a corollary of the next theorem, which concerns the collapsing of asymptotic cones.

\begin{theorem}\label{thm:codim2collapseRic}
    Let $(M,g)$ be an open Riemannian $n$-manifold of nonnegative Ricci curvature. If $\Sc_{g}\ge K>0$ and $\vol_{g}(B_1(x))\ge v>0$ for every $x\in M$, then the essential dimension of any asymptotic cone of $M$ is at most $n-2$. 
\end{theorem}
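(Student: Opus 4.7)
The plan is to argue by contradiction, assuming some asymptotic cone $Y$ of $M$ has essential dimension $k\ge n-1$. Since the asymptotic volume ratio of $M$ vanishes by \eqref{eq:AVR0}, $Y$ cannot be a non-collapsed Ricci limit of $n$-manifolds, so $k\le n-1$, and it suffices to exclude $k=n-1$. By the Mondino--Naber rectifiability of Ricci limits, some point $y\in Y$ is $(n-1)$-regular with tangent cone $\R^{n-1}$. A standard diagonal argument applied to the defining convergence $(M,r_i^{-2}g,p)\to Y$ together with further rescalings of $Y$ at $y$ produces sequences $t_m\to\infty$ and $q_m\in M$ with $(M,t_m^{-2}g,q_m)\to(\R^{n-1},0)$ in the pointed Gromov--Hausdorff topology.

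The key device is to replace this collapsed blow-up by a non-collapsed one that still remembers the $\R^{n-1}$ structure at infinity. For each $m$, let $\alpha_m\in[1,\infty)$ be the largest scale satisfying $\vol_g(B_{\alpha_m}(q_m))/\alpha_m^n\ge v$; its existence and finiteness follow from the scale-$1$ noncollapsing hypothesis, Bishop--Gromov monotonicity, and \eqref{eq:AVR0}. The rescaled sequence $(M,\alpha_m^{-2}g,q_m)$ has $\Ric\ge 0$, $\Sc\ge\alpha_m^2K\ge K$, and $\vol(B_1(q_m))=v$, so by Theorem \ref{thm:n-2splitting} a subsequential pGH limit $Z$ is a Ricci limit splitting at most $\R^{n-2}$. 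The collapsed limit $\R^{n-1}$ forces $\vol_g(B_{t_m}(q_m))/t_m^n\to 0$, hence $\alpha_m<t_m$ for $m$ large; moreover $t_m/\alpha_m\to\infty$, for otherwise a bounded rescaling would yield $Z\cong\R^{n-1}$, contradicting that the non-collapsed $\RCD(0,n)$ limit $Z$ has essential dimension $n$. Writing $R_m:=t_m/\alpha_m\to\infty$, a diagonal argument identifies $\R^{n-1}$ as an asymptotic cone of $Z$ based at $q_\infty:=\lim q_m$, since $R_m^{-2}\alpha_m^{-2}g=t_m^{-2}g$.

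To conclude, each of the $n-1$ orthogonal coordinate lines through the origin of $\R^{n-1}$, viewed as an asymptotic cone of $Z$, lifts to a line through $q_\infty$ in $Z$ via a standard Arzela--Ascoli argument applied to minimizing geodesics of diverging length in $Z$ whose midpoints accumulate at $q_\infty$. Iterating the splitting theorem for $\RCD(0,n)$ spaces, $Z$ splits isometrically as $\R^{n-1}\times W$ with $W$ compact, contradicting the bound on the splitting factor from Theorem \ref{thm:n-2splitting}. The crux of the argument, and the mechanism for handling the collapsed blow-up to which Theorem \ref{thm:n-2splitting} does not directly apply, is the two-scale bookkeeping $(\alpha_m,t_m)$ producing a non-collapsed Ricci limit whose asymptotic cone retains the $\R^{n-1}$ factor from the original blow-up; checking that this bookkeeping behaves as claimed, and in particular that the ratio $t_m/\alpha_m$ diverges, is the most delicate part of the argument.
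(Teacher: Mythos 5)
Your overall strategy (blow up the collapsed $\R^{n-1}$ picture at a different scale to get a noncollapsed limit with $\Sc\ge K$ preserved, then contradict Theorem \ref{thm:n-2splitting}) is the same as the paper's, and several of your intermediate steps are sound: the exclusion of essential dimension $n$ via \eqref{eq:AVR0}, the definition and finiteness of the noncollapsing scale $\alpha_m$ (Bishop--Gromov plus base-point independence of the asymptotic volume ratio), the bound $\Sc_{\alpha_m^{-2}g}\ge\alpha_m^2K\ge K$, and the argument that $R_m=t_m/\alpha_m\to\infty$. But the core of your proof has a genuine gap, in exactly the place you flag as ``the most delicate part.'' First, from $(M,\alpha_m^{-2}g,q_m)\to Z$ and $(M,t_m^{-2}g,q_m)\to\R^{n-1}$ with $R_m\to\infty$ you cannot conclude that $\R^{n-1}$ is an asymptotic cone of $Z$: a diagonal argument runs in the opposite direction (given scales $\rho_j\to\infty$ for $Z$ one chooses indices $m_j$ large \emph{afterwards}), whereas here the scale $R_m$ is coupled to $m$ and may far exceed the radius on which $(M,\alpha_m^{-2}g,q_m)$ approximates $Z$; the geometry of the approximating spaces at scale $R_m$ need not reflect the large-scale geometry of $Z$ at all. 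Second, even granting that some blow-down of $Z$ were $\R^{n-1}$, your final step --- lifting the $n-1$ coordinate lines of the blow-down to lines through $q_\infty$ in $Z$ by Arzel\`a--Ascoli --- is false under only $\Ric\ge0$: there is no supply of minimizing geodesics in $Z$ of diverging length passing near $q_\infty$; passing splitting information from large scales down to the space itself is a sectional-curvature phenomenon (angle monotonicity, Section \ref{sec:line}), and the paper's Remark \ref{rem:ric} records precisely this obstruction. A concrete counterexample to the implication you need is the Taub--NUT metric on $\R^4$: Ricci-flat, noncollapsed, with asymptotic cone $\R^3$, yet it splits no line.

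This missing ingredient is exactly what the paper's proof supplies through the Kapovitch--Wilking rescaling theorem (Theorem \ref{thm:rescaling}): applied to the sequence $(M,s_i^{-2}g,p_i)\to\R^{n-1}$, it produces rescaling factors $\lambda_i\to\infty$ and good points $q_i$ for which the blown-up limit \emph{provably} remains of the form $\R^{n-1}\times D$ with $D$ compact and nontrivial (the $\R^{n-1}$-splitting propagates because the harmonic almost-splitting maps are controlled at the new points and scales), hence noncollapsed. The uniform volume lower bound then enters only to show that the effective scale factor $\lambda_i/s_i$ stays bounded (inequality \eqref{eq:limitfinite}, via the $O(R^{n-1})$ versus $O(R^n)$ volume growth comparison), so the scalar curvature lower bound persists and Theorem \ref{thm:n-2splitting} gives the contradiction. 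Your choice of the volume scale $\alpha_m$ makes ``noncollapsed with $\Sc\ge K$'' automatic but gives no control whatsoever on whether the $\R^{n-1}$ structure survives at that scale and point, which is the harder half of the problem; without an argument of Kapovitch--Wilking type (or an equivalent propagation-of-splitting statement), the proof does not close.
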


Compared to manifolds with only nonnegative Ricci curvature, whose asymptotic cones can have essential dimension $n$ if at some point the asymptotic volume ratio is positive, $2$ dimensions vanish because of the uniformly positive scalar curvature.

 The next main theorem is to claim that the first betti number is also a candidate of the answers to Question \ref{ques:main}. We present here an upper bound of the first Betti number with positive scalar curvature and then discuss its rigidity. Recall that we denote by $b_1(M)$ the first Betti number of $M$. Let us relax the curvature bound in \cite{WZZZ_PSC_RLS}*{Corollary 1.4} and prove a rigidity theorem. 

\begin{theorem}\label{thm:betti}
     Let $(M,g)$ be a complete compact Riemannian $n$-manifold, with $\Sc_g\ge K>0$, $\mr{diam}(M,\dist_{g})< D<\infty$, and $\vol_g(M)>v>0$. There exists $\eps\defeq \eps(n,D,v,K)>0$, so that if $\Ric_{g}\ge -\eps$, then $b_1(M)\le n-2$. 
     
     If $b_1(M)=n-2$, and in addition %$\Ric_g\ge 0$
     $\Ric_g$ is bounded from above, then $M$ is homeomorphic to a fiber bundle over $\mb T^{n-2}$, with the fiber $F$ being $\mb S^2$ or $\R P^2$. 
     
     If $M$ is orientable and a $\mb S^2$ bundle over $\mb T^{n-2}$, where $3\le n\le 7$, then we have that the minimal area of any isometrically embedded $\mb S^2$ is bounded above by $\frac{8\pi}{K}$. %The equality holds if and only if $M$ has universal cover $\mb S^2\times \mb R^{n-2}$.
\end{theorem}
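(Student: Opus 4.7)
The three assertions of the theorem will be established by three separate arguments, all ultimately feeding off Theorem~\ref{thm:n-2splitting}.

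For the Betti number bound I argue by contradiction. Suppose a sequence of manifolds $(M_i,g_i)$ satisfies the standing hypotheses with $\Ric_{g_i}\ge -\eps_i\to 0$ yet $b_1(M_i)\ge n-1$. Pass to the rank-$(n-1)$ abelian covers $\widetilde M_i\to M_i$; these still have $\Sc\ge K$ and $\Ric\ge -\eps_i$, are noncollapsed on unit balls thanks to $\vol_g(M)>v$, and carry a $\mb Z^{n-1}$ deck action whose generators have translation lengths controlled by the diameter $D$. By the classical Colding--Cheeger argument (the one behind the almost-rigidity statement that $b_1=n$ forces a homeomorphism to a torus under almost-nonnegative Ricci and noncollapsing), any pointed Gromov--Hausdorff subsequential limit of $(\widetilde M_i,\tilde p_i)$, rescaled appropriately, splits off $\R^{n-1}$ isometrically. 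This contradicts the natural almost-nonnegative Ricci extension of Theorem~\ref{thm:n-2splitting}, which allows at most an $\R^{n-2}$ splitting in such a limit; this extension requires no new input, since both Cheeger--Colding theory and the torical band estimates underlying Theorem~\ref{thm:n-2splitting} are stable under the relaxation $\Ric\ge -\eps_i\to 0$.

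For the fiber bundle rigidity at equality $b_1(M)=n-2$, the same covering argument combined with the rigidity clause of Theorem~\ref{thm:n-2splitting} forces the rescaled limit of the rank-$(n-2)$ abelian cover to be isometric to $\R^{n-2}\times\Sigma$ with $\Sigma$ one of $\mb S^2$ or $\R P^2$ equipped with a nonnegatively curved Alexandrov metric. To turn this limit picture into a topological bundle structure on $M$ itself, I integrate the $n-2$ linearly independent harmonic $1$-forms granted by $b_1(M)=n-2$ into a harmonic map $h\colon M\to \mb T^{n-2}$. Cheeger--Colding's $\eps$-splitting theorem then shows that $dh$ is almost orthonormal and of full rank on a set of almost full measure at small scales, and its regular fibers are almost $2$-dimensional surfaces Gromov--Hausdorff close to $\Sigma$. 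The additional hypothesis that $\Ric_g$ is bounded from above supplies a two-sided Ricci bound and hence Colding--Naber $\eps$-regularity, upgrading the approximate splitting to a smooth submersion on a regular set of full measure; a Fukaya-type fibration argument adapted to the Ricci setting then promotes this to a topological fiber bundle over $\mb T^{n-2}$, whose fibers are homeomorphic to $\mb S^2$ or $\R P^2$ by the classification of closed surfaces together with their Gromov--Hausdorff proximity to $\Sigma$. The main obstacle is producing the topological fibration from the approximate splitting in the absence of sectional bounds, which is exactly where the two-sided Ricci hypothesis is used.

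For the area bound, since $M$ is an orientable $\mb S^2$-bundle over the aspherical base $\mb T^{n-2}$, the long exact sequence of the fibration yields $\pi_2(M)\cong\mb Z$ generated by the fiber class. When $n=3$, Sacks--Uhlenbeck (or min-max) produces a stable minimal $\mb S^2$, call it $\Sigma$, representing this class; stability with test function $f\equiv 1$ gives $0\ge \int_\Sigma(|A|^2+\Ric(\nu,\nu))$, and combining this with the Gauss equation $K^{\mathrm{amb}}_{12}=K^\Sigma_{\mathrm{intr}}+\tfrac{1}{2}|A|^2$, the identity $\Sc_M=2K_{12}+2\Ric(\nu,\nu)$, and the Gauss--Bonnet bound $\int_\Sigma K^\Sigma_{\mathrm{intr}}\le 4\pi$ yields $\tfrac{K}{2}\mathrm{Area}(\Sigma)\le 4\pi$. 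For $3\le n\le 7$ I reduce to the three-dimensional case via Schoen--Yau iterative descent: use $n-3$ of the cohomology classes pulled back from $H^1(\mb T^{n-2},\mb Z)$ to produce a nested chain $M\supset N_{n-1}\supset\cdots\supset N_3$ of smooth stable minimal hypersurfaces (smoothness ensured by $n\le 7$), where the conformal rescaling built into each descent step preserves a positive scalar curvature lower bound; the fiber class of $M$ restricts to a nontrivial element of $\pi_2(N_3)$, and applying the three-dimensional argument inside $N_3$ produces an embedded $\mb S^2$ of area at most $8\pi/K$.
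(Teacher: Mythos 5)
Your first part (the bound $b_1\le n-2$) is in the right spirit and close to the paper's argument, which, however, does not work on the abelian covers of the $M_i$ directly: it passes to the noncollapsed limit $X$, uses stability of the first Betti number (Proposition \ref{prop:stableBetti}) and the Mondino--Wei splitting of the universal cover of $X$, together with equivariant convergence, and in particular \emph{no rescaling} is needed. This matters, because your phrase ``rescaled appropriately'' is exactly where the contradiction could evaporate: blowing up destroys the uniform bound $\Sc\ge K$ that is needed to contradict Theorem \ref{thm:n-2splitting}. The genuine gap, though, is the bundle-structure step. You propose to integrate the harmonic $1$-forms into an Albanese-type map $h\colon M\to\mb T^{n-2}$ and to promote it to a topological fibration by ``Colding--Naber $\eps$-regularity'' and a ``Fukaya-type argument'' under a two-sided Ricci bound. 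Cheeger--Colding $\eps$-splitting controls $dh$ only in an integral sense on sets of large measure; there is no off-the-shelf $\eps$-regularity under two-sided Ricci bounds alone that makes $dh$ of full rank everywhere, and this is precisely the difficulty behind Anderson's counterexamples to the fibration question that the paper recalls. The paper circumvents it by proving a separate fibration theorem (Theorem \ref{thm:fibration}): equivariant convergence of the universal covers, identification of the deck group with $\pi_1$ (Theorem \ref{thm:Deck=fund}) together with Pan--Wang's description of the universal cover of the limit, the Cheeger--Naber codimension-$4$ theorem to show the isotropy is trivial so that the noncollapsed limit $X$ is a manifold, an explicit argument that $H_1(X)$ acts by translations on the $\R^{n-2}$ factor, and finally the intrinsic Reifenberg theorem to transfer the bundle structure to $M$. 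Note also that your identification of the fiber as $\mb S^2$ or $\R P^2$ rests on Gromov--Hausdorff closeness of the (unconstructed) fibers to the rigidity factor $\Sigma$; the paper instead excludes aspherical fibers via the homotopy exact sequence, finiteness of $\mr{asdim}(\pi_1(M))$ (Proposition \ref{prop:asdim}), and Dranishnikov's theorem (Theorem \ref{thm:asdimpsc}).

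The area bound also has a quantitative gap. In the reduction for $3\le n\le 7$ you assert that ``the conformal rescaling built into each descent step preserves a positive scalar curvature lower bound.'' The conformal Schoen--Yau descent only yields positivity of a conformally modified scalar curvature; it does not preserve the constant $K$, and it changes the induced metric on the slices and hence their areas, so the sharp bound $\tfrac{8\pi}{K}$ cannot be propagated through your chain $N_{n-1}\supset\cdots\supset N_3$. The paper instead (i) uses Gromov's theorem to produce an embedded surface $Y\subset M$ homologous to the fiber (hence a sphere), and (ii) performs the warped-product torical symmetrization, obtaining $(Y\times\mb T^{n-2},\,g_Y+\sum_{i=1}^{n-2}\phi_i^2\,\mr{d}u_i^2)$ with $\Sc\ge K$ while the induced metric $g_Y$ is left unchanged; the stability-plus-Gauss--Bonnet computation (your $n=3$ argument, in the form of Zhu's area lemma) then applies in the symmetrized space and bounds the area of an honestly embedded sphere in $M$. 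Finally, your claim that the fiber class restricts to a nontrivial element of $\pi_2(N_3)$ is asserted without proof; producing a sphere homologous to the fiber inside the descent is essentially the content of the Gromov result the paper invokes, not a formality.
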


Theorem \ref{thm:betti} is a compact counterpart of Theorem \ref{thm:n-2splitting}. In particular, the following corollary will be clear from the proof of Theorem \ref{thm:fibration} and Theorem \ref{thm:betti}.

\begin{corollary}
    Let $(M_i,g_i)$ be a sequence of complete compact Riemannian $n$-manifolds, with $0\le\Ric_{g_i}\le K_1$, $\Sc_{g_i}\ge K_2>0$, $\mr{diam}(M,\dist_{g_i})< D<\infty$, and $\vol_{g_i}(M)>v>0$. If $(M_i,g_i)$ GH converges to a noncollapsed limit space $(X,\dist)$, then $b_1(X)\le n-2$, if $b_1(X)=n-2$, then $X$ is a topological fiber bundle over $\mb T^{n-2}$, and the fiber is of topological type $\mb S^2$ or $\R P^2$.
\end{corollary}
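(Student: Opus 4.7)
The plan is to combine Theorem~\ref{thm:betti} applied to each $M_i$ with Anderson's $C^{1,\alpha}$-compactness theorem for manifolds with two-sided Ricci bounds, which is available here because $0\le\Ric_{g_i}\le K_1$ and the limit is noncollapsed.

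First I would fix $\eps = \eps(n,D,v,K_2)>0$ as in Theorem~\ref{thm:betti}. Since $\Ric_{g_i}\ge 0\ge -\eps$, $\Sc_{g_i}\ge K_2>0$, $\mr{diam}(M_i,\dist_{g_i})<D$, and $\vol_{g_i}(M_i)>v$, Theorem~\ref{thm:betti} immediately gives $b_1(M_i)\le n-2$ for every $i$. Next, under the two-sided bound $|\Ric_{g_i}|\le K_1$ together with bounded diameter and uniform volume lower bound, Anderson's compactness theorem guarantees that, after passing to a subsequence, $(M_i,g_i)$ converges in the $C^{1,\alpha}$-topology to a $C^{1,\alpha}$-Riemannian manifold; by the uniqueness of the Gromov-Hausdorff limit, this manifold must coincide with $X$, and $M_i$ is diffeomorphic to $X$ for all sufficiently large $i$. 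In particular $b_1(X)=b_1(M_i)\le n-2$.

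For the rigidity assertion, suppose $b_1(X)=n-2$. Then $b_1(M_i)=n-2$ for all large $i$, and since $\Ric_{g_i}$ is uniformly bounded above by $K_1$, the rigidity part of Theorem~\ref{thm:betti} yields that each such $M_i$ is homeomorphic to a fiber bundle over $\mb T^{n-2}$ whose fiber is $\mb S^2$ or $\R P^2$. Since $M_i$ is homeomorphic to $X$, the space $X$ inherits the same topological fiber bundle structure.

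The main (and essentially the only) obstacle is to transfer topological data from the approximating sequence $M_i$ to the limit $X$. Under a Ricci \emph{lower} bound alone, Cheeger-Colding theory provides only a bi-H\"older homeomorphism on the regular set of $X$, which is insufficient either to identify $b_1(X)$ with $b_1(M_i)$ or to recover a fibration structure in the limit. The upper Ricci bound $K_1$ in the hypothesis is precisely what upgrades the convergence to $C^{1,\alpha}$ via Anderson's theorem, and this upgrade is what allows $b_1$ and the fiber bundle structure to pass from $M_i$ to $X$.
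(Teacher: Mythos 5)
There is a genuine gap: the step that upgrades the convergence to $C^{1,\alpha}$ is a misapplication of Anderson's compactness theorem. Anderson's $C^{1,\alpha}$ compactness requires a two-sided Ricci bound \emph{together with a lower bound on the injectivity radius} (equivalently, harmonic radius control); the hypotheses here only give $0\le\Ric_{g_i}\le K_1$, $\mr{diam}<D$ and $\vol\ge v$, and a volume lower bound does not imply an injectivity radius lower bound in this class. Noncollapsed limits of manifolds with two-sided Ricci bounds may have a singular set of codimension $4$ (Cheeger--Naber), and the convergence need not be $C^{1,\alpha}$ nor produce diffeomorphisms $M_i\cong X$: degenerations of Ricci-flat metrics on K3 to orbifolds $\mb T^4/\mb Z_2$ satisfy all the bounds you invoke and converge to a non-manifold limit. (The positive scalar curvature assumption does not by itself repair this.) Consequently, both the identification $b_1(X)=b_1(M_i)$ and the transfer of the fiber bundle structure from $M_i$ to $X$ are unsupported as written. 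Your first step, $b_1(M_i)\le n-2$ via Theorem \ref{thm:betti}, is fine, but to pass this to $X$ you would at least need Proposition \ref{prop:stableBetti} (lower/upper semicontinuity of $b_1$ under noncollapsed convergence), not a diffeomorphism.

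The paper's own logic runs in the opposite direction and avoids this problem: it works directly on the limit. From the noncollapsed convergence one gets $b_1(X)=\liminf b_1(M_i)$ by Proposition \ref{prop:stableBetti}; if $b_1(X)\ge n-1$ the equivariant convergence of universal covers would force the universal cover of $X$ to split $\R^{n-1}$ and (being noncollapsed with $\Sc\ge K_2>0$ along the sequence) contradict Theorem \ref{thm:n-2splitting}. When $b_1(X)=n-2$, the proof of Theorem \ref{thm:fibration} shows the universal cover of $X$ is $\R^n$ or $\R^{n-2}\times\mb S^2$, uses the codimension-$4$ theorem to rule out singular points and nontrivial isotropy, concludes that $X$ itself is a manifold fibering over $\mb T^{n-2}$ with fiber $\mb S^2$ or $\R P^2$, and only afterwards transfers the structure to $M_i$ via the intrinsic Reifenberg theorem. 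If you want to salvage your outline, you must replace the Anderson step by this kind of argument establishing regularity of $X$ first; there is no compactness theorem under the stated hypotheses that makes $M_i$ and $X$ diffeomorphic a priori.
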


Although Theorem \ref{thm:codim2collapseRic} implies Theorem \ref{thm:codim2collapse} below, we state it separately and give a proof with only Alexandrov geometry tools. Because the argument is more elementary and encodes a nice relation between the asymptotic cones and the limit space at infinity.

\begin{theorem}\label{thm:codim2collapse} 
Let $(M,g)$ be an open Riemannian $n$-manifold of nonnegative sectional curvature. If $\Sc_{g}\ge K>0$ and $\vol_{g}(B_1(x))\ge v>0$ for every $x\in M$. Then the Hausdorff dimension of the asymptotic cone of $M$ at any point is at most $n-2$.
\end{theorem}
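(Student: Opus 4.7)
The plan is to argue by contradiction, producing an unrescaled pointed Gromov--Hausdorff (pGH) limit of $M$ ``at infinity'' that splits $\R^{n-1}$ and so contradicts Theorem \ref{thm:n-2splitting}. Fix $p \in M$ and let $(C, o)$ be an asymptotic cone realized as a pGH limit of $(M, r_i^{-1} g, p)$ for some $r_i \to \infty$. Since $M$ has nonnegative sectional curvature, $C$ is a nonnegatively curved Alexandrov space with a well-defined integer Hausdorff dimension, and since it is an asymptotic cone, it is in fact a metric cone over an Alexandrov space $\Sigma$ of curvature $\ge 1$. The AVR$=0$ conclusion \eqref{eq:AVR0} forces $\dim_{\mathcal H} C \le n-1$, so suppose for contradiction that $\dim_{\mathcal H} C = n-1$.

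By Burago--Gromov--Perelman regularity, the regular set of $C$ is dense, and at every regular point $q \in C$ the tangent cone $T_q C$ equals $\R^{n-1}$. Pick such a $q$ and choose $q_i \in M$ with $r_i^{-1} d_g(p, q_i) \to d_C(o, q)$, so that $(M, r_i^{-1} g, q_i) \to (C, q)$ in pGH. Multiplying metrics by $r_i$, the sequence $(M, g, q_i)$ is close at any bounded radius to $(C, r_i d_C, q)$, and the latter converges to $T_q C = \R^{n-1}$ by the definition of tangent cone. A standard diagonal argument, passing to a subsequence along which the pGH approximation error at scale $1/r_i$ is $o(1/r_i)$, yields $(M, g, q_i) \to (\R^{n-1}, 0)$ in pGH along this subsequence. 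In this precise sense the asymptotic cone at $p$ is encoded by an unrescaled ``limit space at infinity'' of $M$, which is the viewpoint highlighted in the introduction.

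Each $(M, g, q_i)$ along this subsequence has $\Ric \ge 0$, $\Sc_g \ge K > 0$, and $\vol_g(B_1(q_i)) \ge v > 0$. Hence Theorem \ref{thm:n-2splitting} applies and shows that its pGH limit can split at most $\R^{n-2}$. However the limit $\R^{n-1}$ splits $n-1$ Euclidean factors, giving the required contradiction and forcing $\dim_{\mathcal H} C \le n-2$. The main obstacle is the diagonal rescaling step in the previous paragraph: a pGH convergence at a fixed radius does not automatically survive multiplication of the metric by an unbounded factor $r_i$, since any GH error gets amplified by $r_i$. One resolves this either by choosing the defining sequence $r_i$ to grow slowly from the outset (using that asymptotic cones of nonnegatively curved Alexandrov spaces are independent of the defining subsequence), or by extracting a suitable subsequence a posteriori. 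Once this rescaling bookkeeping is handled, the remainder is a clean application of Theorem \ref{thm:n-2splitting} together with the elementary Alexandrov fact that tangent cones at regular points are Euclidean.
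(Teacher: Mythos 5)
There is a genuine gap at the heart of your argument: the step where you pass from $(M,r_i^{-1}g,q_i)\to (C,q)$ to the claim that the unrescaled sequence $(M,g,q_i)$ pGH converges to $T_qC=\R^{n-1}$. Gromov--Hausdorff closeness of the rescaled balls $B_R^{r_i^{-1}g}(q_i)$ to $B_R^{\dist_C}(q)$ gives no information at scale $1/r_i$, and no subsequence extraction can make the approximation error $o(1/r_i)$: since the asymptotic cone of a nonnegatively curved manifold is unique, the error at scale $r$ is a fixed function $\eps(r)\to 0$, and demanding $r_i\,\eps(r_i)\to 0$ along some sequence (or ``choosing $r_i$ to grow slowly'') would essentially force $M$ to be conical at large scales, which is not implied by the hypotheses. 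Worse, the claimed conclusion is simply false in general: the elliptic paraboloid example following Theorem \ref{thm:splitting} has one-dimensional asymptotic cone (a half line), yet its unrescaled limits at infinity are $\R^n$; the unrescaled limit at infinity is \emph{not} the tangent cone of the asymptotic cone at a regular point, it can only be said to \emph{split} at least that many lines. So the ``rescaling bookkeeping'' you defer to at the end is not bookkeeping at all---it is exactly the substantive geometric content that is missing.

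What you need for the contradiction is weaker than your claim (the limit need only split $\R^{n-1}$, not equal it), but establishing even that splitting is precisely where nonnegative \emph{sectional} curvature enters: one transports geodesics seen at the large scale down to the unrescaled scale via the monotonicity of comparison angles (Lemma \ref{lem:splittingatinfinity}), and then shows the resulting $n-1$ lines through the limit point are mutually perpendicular, hence split distinct $\R$ factors (Theorem \ref{thm:splitting}); Remark \ref{rem:ric} explains why such a transfer is delicate and unavailable under only Ricci lower bounds. The paper's proof of the statement is exactly this: apply Theorem \ref{thm:splitting} at a regular point of the asymptotic cone to get an unrescaled limit at infinity splitting $\R^{n-1}$, whose points satisfy $\Ric\ge 0$, $\Sc\ge K$, $\vol(B_1)\ge v$, contradicting Theorem \ref{thm:n-2splitting} (alternatively, the statement follows from Theorem \ref{thm:codim2collapseRic}, where the passage to a noncollapsed limit is done by the Kapovitch--Wilking rescaling theorem and one must separately verify, as in \eqref{eq:limitfinite}, that the rescaling factors stay bounded so that uniform positivity of the scalar curvature survives). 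Your overall contradiction scheme and the use of regular points of the Alexandrov asymptotic cone match the paper's strategy, but the bridge from the cone's tangent space to the unrescaled limit must be repaired along one of these two lines.
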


%An limiting argument along with the stability of the scalar curvature measure $\Sc_g\mr{d}\vol_g$ \cite{petrunin_curvature_tensor} immediately yields a generalization of Theorem \ref{thm:codim2collapse} to noncollapsingly smoothable Alexandrov spaces. Here an Alexandrov space $A$ is noncollapsingly smoothable (smoothable in short) if it is a noncollapsed pGH limit of a sequence of manifolds with the same sectional curvature lower bound as $A$.

%\begin{corollary}\label{cor:Alex}
   % Let $A$ be an $n$-dimensional smoothable Alexandrov space of nonnegative curvature. If the scalar curvature measure of $(A,\dist_A)$ is uniformly positive, i.e., $\mathbf{Sc}\ge K\haus^n$ for some $K>0$ %and it is uniformly noncollapsed, i.e., $\haus^n(B_1(x))>v>0$ for all $x\in A$ , then the Hausdorff dimension of the asymptotic cone of $A$ at any point is at most $n-2$.
%\end{corollary}

 In the sequel, for a metric space $(X,\dist)$ we say a sequence of points $\{p_i\}_{i\in N}$ in $X$ diverges to infinity, denoted by $p_i\to \infty$, if $\dist(x,p_i)\to \infty$ for any fixed $x\in X$ as $i\to \infty$. We call a pGH limit of $(M,g, p_i)$ a limit space at infinity of $M$ if $p_i\to \infty$.

 All of the proofs originate from the very same general idea, which is to find a large enough number of splitting factors in the limit space at infinity from the information of a collapsed asymptotic cone if the statements to be proved were not true, resulting in a contradiction to \ref{thm:n-2splitting}. 
 
 The relation between an asymptotic cone and a limit space at infinity is straightforward in the presence of nonnegative sectional curvature, as can be seen from Theorem \ref{thm:splitting}. The very reason is the monotonicity of comparison angles, which helps passing information of geodesics on a large scale to smaller ones. This is the content of section \ref{sec:line}.

However, for nonnegative Ricci curvature, the asymptotic cones at a point can be non-unique, even non-homeomorphic \cite{CN11}. For collapsed asymptotic cones, they are not necessarily metric cones, and they can even be nonpolar \cite{MenguyNonpolar}. Moreover, the singular set can have larger Hausdorff dimension than that of the regular set \cite{PanWeiHaus}. As opposed to nonnegative sectional curvature case, the line splitting argument as in section \ref{sec:line} seems to be hard to proceed, see Remark \ref{rem:ric}.

For our purpose, we only need to deal with the simplest possible codimension $1$ collapsing. In this case, a theorem of Kapovitch-Wilking \cite{KapovitchWilking}*{Theorem 5} can be applied to obtain a noncollapsed limit space from a collapsed asymptotic cone by a rescaling, but there is no a priori information for the rescaling sequence. The new observation is that this rescaling argument can be done so that the uniformly positive scalar curvature is preserved.
We also use this idea to study the first Betti number for compact manifolds. These all have the $2$-dimension vanishing behavior hence are put together in section \ref{sec:collapse}. 

In order to to study the rigidity of the first Betti number we also establish a fibration theorem for large first Betti number that maybe interesting in its own right. The fibration theorem is based on the observation that if the limit space is coming from a sequence of manifolds with Ricci curvature bounded above as well as almost nonnegative and the first Betti number almost reach its upper bound, then it is homeomorphic to an manifold. In fact, with the help of splitting theorem we can show that if the first Betti number is large then the universal cover of the limit space is a manifold and the limit space itself is an orbit space of a nice isometric group action on its universal cover, hence also a manifold. This is built on the identification between the deck transform group and the fundamental group for Ricci limit spaces \cite{Wang_RicciSemilocal}, the equivariant convergence of the universal cover \cite{PanWang_universal} and codimension 4 theorem \cite{CN15_codim4}. We separate it in Section \ref{sec:Betti}.

\textbf{Acknowledgements.} The author would like to thank Bo Zhu for fruitful discussions, from which stems the main topics of this work. The author is also grateful to Shouhei Honda for helpful suggestions on an early draft of this work, to Jikang Wang for explaining \cites{PanWang_universal,Wang_RicciSemilocal} as well as for several insights about the proof of Theorem \ref{thm:fibration}, to Igor Belegradek for communicating to the author a part of proof in Theorem \ref{thm:betti}, to Zetian Yan for useful disscusions over \cite{gromov2020NoPcs5D} and \cite{ZhuJT_area}.

\section{Preliminaries and notations}\label{sec:prelim}

\subsection{Structure of metric measure spaces with lower Ricci curvature bounds}
When we speak of an $\RCD(K,N)$ space, we always assume $K\in \R$ and $N\in [1,\infty)$. When speaking of an Aleksandrov space, we always assume it is complete and finite dimensional. It is shown in \cite{PetruninCD} that an Alexandrov space along with its Hausdorff measure is an $\RCD$ space. So what we say about $\RCD$ spaces is also true for Alexandrov spaces. We recall here some basic definitions and facts.
%A metric measure space is complete separable metric space with a locally finite Borel measure having full support. 

 We start with $\RCD$ spaces. Given an $\RCD(K,N)$ space $(X,\dist,\meas)$, let $\mathcal{R}_k(X)$ be the set of points at which the tangent cone is $(\R^k,|\cdot|,\leb^k)$, for $k\in [1,N]\cap \N$, called the $k$-regular set of $X$, and $\mathcal{R}(X)\defeq \cup_k \mathcal{R}_k$ is called the regular set of $X$.
 
 \begin{definition}[\cite{BrueSemola20Constancy}]
 there is a unique $n\in [1,N]\cap \N$ such that $\meas(X\setminus\mathcal{R}_n)=0$. This $n$ is called the essential dimension of $(X,\dist,\meas)$, denoted by $\mr{essdim}(X)$. 
 \end{definition}
 
 The essential dimension can also be defined as the maximum integer $k$ so that $\R^k$ can be split off in a tangent cone, as a result of \cite{kitabeppu2017sufficient}. Meanwhile, the essential dimension is lower semicontinuous under pGH convergence.
 
\begin{theorem}[\cite{kitabeppu2017sufficient}]\label{thm:lsc}
    Let $(X_i,\dist_i,\meas_i,x_i)$ be a convergent pGH sequence of $\RCD(K,N)$ spaces with limit $(X,\dist,\meas,x)$. Then
    \[
    \mr{essdim}(X)\le \liminf_{i\to\infty} \mr{essdim}(X_i).
    \]
\end{theorem}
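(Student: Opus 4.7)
The plan is to prove the contrapositive: assuming, after passing to a subsequence, that $\mr{essdim}(X_i)\le m$ for all $i$, I would derive $\mr{essdim}(X)\le m$. By Kitabeppu's characterization from \cite{kitabeppu2017sufficient}---$\mr{essdim}(Y)\ge k$ iff $\R^k$ appears as a splitting factor in some tangent cone of $Y$---it suffices to rule out any tangent cone of $X$ splitting off $\R^{m+1}$.

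Suppose for contradiction some tangent cone at $p\in X$ has the form $\R^{m+1}\times Z$ for a pointed $\RCD(0,N-m-1)$ space $(Z,z)$, realized by scales $s_k\to 0$ with
\[
(X,s_k^{-1}\dist,p)\xrightarrow{\mr{pmGH}}(\R^{m+1}\times Z,(0,z)).
\]
Approximating $p$ by $p_i\in X_i$ and applying a diagonal extraction to the double-indexed family $(i,k)$, I would obtain $r_i\to 0$ and a subsequence in $i$ with
\[
(X_i,r_i^{-1}\dist_i,p_i)\xrightarrow{\mr{pmGH}}(\R^{m+1}\times Z,(0,z)).
\]
The rescaled spaces are $\RCD(r_i^2 K,N)$ with $r_i^2 K\to 0$, so the limit sits in the $\RCD(0,N)$ category. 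The $m+1$ coordinate lines in $\R^{m+1}\times Z$ through $(0,z)$ pull back to $m+1$ approximate lines through $p_i$ in the rescaled $X_i$. A further blow-up at $p_i$---which is scale-invariant and so produces actual tangent cones of $X_i$---combined with one more diagonal extraction, turns these almost-lines into $m+1$ honest mutually orthogonal lines in a tangent cone of $X_i$ at $p_i$. Iterating the RCD splitting theorem $m+1$ times then yields a tangent cone of $X_i$ of the form $\R^{m+1}\times Z_i$, so $\mr{essdim}(X_i)\ge m+1$ for all large $i$ by Kitabeppu's characterization, contradicting the assumption.

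The main obstacle is justifying the upgrade from almost-lines at scale $r_i$ to actual lines in a tangent cone of $X_i$. I would handle this via harmonic splitting functions: the $\R^{m+1}$ factor produces on each rescaled $X_i$ near $p_i$ a family of $m+1$ approximately linear, approximately harmonic functions with nearly orthonormal gradients; by the Cheeger--Colding almost-splitting theory extended to the RCD setting in Gigli's splitting framework, these converge---after a further diagonal in the blow-up scales at $p_i$---to genuine Busemann functions of $m+1$ orthogonal lines in a suitable tangent cone of $X_i$, at which point the splitting theorem applies directly to produce the desired $\R^{m+1}$ splitting factor.
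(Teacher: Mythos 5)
There is a genuine gap, and it sits exactly where you flagged ``the main obstacle.'' (Note also that the paper does not prove this statement; it quotes it from Kitabeppu, whose actual argument goes through the sufficient-condition theorem plus a propagation-of-splitting step, not through blow-ups at the chosen base point.) Your plan is to pass from the fact that $(X_i,r_i^{-1}\dist_i,p_i)$ is pmGH-close to $\R^{m+1}\times Z$ to the claim that a \emph{tangent cone of $X_i$ at the same point $p_i$} splits $\R^{m+1}$, by blowing up further at $p_i$ and arguing that the ``almost-lines'' (or the almost-splitting harmonic maps) survive. This step fails: closeness to a splitting space at one definite scale $r_i$ gives no control whatsoever at scales much smaller than $r_i$ at the fixed point $p_i$. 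Concretely, let $X_i$ be the two-dimensional metric cone with total angle $2\pi-1/i$ and $p_i$ its vertex. For large $i$ the unit ball around $p_i$ is GH-close to the Euclidean disk, so it carries two almost-orthogonal almost-lines through $p_i$, yet the tangent cone at $p_i$ is the cone itself and splits no line at all (no minimizing geodesic passes through the vertex). The same phenomenon defeats the harmonic-function version of your argument: the $\eps$-splitting map is controlled only by integral (Hessian) bounds at scale $r_i$, and these do not pass to blow-up scales at the specific point $p_i$; the near-geodesics realizing the almost-lines only pass within $\eps_i r_i$ of $p_i$, a distance which diverges after rescaling, so no lines appear in the tangent cone.

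The missing idea is the maximal-function/propagation argument: from the $\eps_i$-splitting map $u_i\colon B_{r_i}(p_i)\to\R^{m+1}$ one must select \emph{nearby good points} $y_i$ (a set of large measure in $B_{r_i}(p_i)$, obtained by a weak-type estimate on the maximal function of $|\mathrm{Hess}\,u_i|^2$, as in Cheeger--Colding and Kapovitch--Wilking, available in the $\RCD$ setting, cf.\ the propagation of splitting invoked in the paper before Theorem \ref{thm:rescaling}) at which the almost-splitting persists at \emph{all} scales below $r_i$; at such $y_i$ every tangent cone of $X_i$ splits $\R^{m+1}$, and only then does Kitabeppu's sufficient condition yield $\mr{essdim}(X_i)\ge m+1$, giving the contradiction. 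With $p_i$ replaced by such $y_i$ your outline becomes correct; without that replacement the key step is false as stated.
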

 
 An $\RCD(K,N)$ space $(X,\dist,\meas)$ is called noncollapsed, denoted by $\ncRCD(K,N)$, if $\meas=\haus^N$, otherwise it is called collapsed. It follows that $N\in \N$, and $\mr{essdim}(X)=N$, see \cite{DPG17}. A typical example of a $\ncRCD(0,n)$ space is an Riemannian $n$-manifold $(M,\dist_g,\vol_g)$ of nonnegative Ricci curvature. 
 For a noncollapsed space, the regular points can be characterized by the volume density \cite{DPG17}*{Corollary 1.7}:
\begin{equation}\label{eq:density}
    \Theta_N(x)=1 \Leftrightarrow x\in \mathcal{R}_N=\mathcal{R}.
\end{equation}

 Also for a noncollapsed space, the singular set $\mc{S}\defeq X\setminus \mc{R}(X)$ is stratified into 
\[
\mathcal{S}_0\subset \mathcal{S}_1\subset \cdots\subset \mathcal{S}_{N-1},
\]
where for $0\le k\le N-1$, $k\in \mathbb{Z}$, $\mathcal{S}_k=\{x\in \mathcal{S}: \text{no tangent cone at $x$ is isometric to } \R^{k+1}\times C(Z)\text{ for any metric space } Z\}$, where $C(Z)$ is the metric measure cone over a metric space $Z$.
 
The main theorem in \cite{BrenaGigliHondaZhu} can be rephrased as a essential dimension gap theorem, which is proved in \cite{CN12} for Ricci limit spaces.
 
 \begin{theorem}[\cite{BrenaGigliHondaZhu}*{Theorem 1.5}]\label{thm:essdimgap}
     If an $\RCD(K,N)$ space $(X,\dist,\meas)$ is collapsed then its essential dimension is at most $[N]-1$.
 \end{theorem}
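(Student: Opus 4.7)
The plan is to prove the contrapositive: assuming $\mr{essdim}(X)\ge[N]$, show that $(X,\dist,\meas)$ is necessarily noncollapsed, thereby contradicting the collapsed hypothesis. Writing $n\defeq\mr{essdim}(X)$, since $n$ is an integer with $n\le N$ and $n\ge[N]$, one must have $n=[N]$, and the task reduces to proving $N=n$ (hence $N$ integer) and $\meas=\haus^n$ up to a multiplicative constant.

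The first step is a tangent cone analysis. By the constancy-of-dimension theorem \cite{BrueSemola20Constancy} together with Mondino--Naber rectifiability, $\meas$-a.e.\ point $x\in X$ is $n$-regular: the tangent cone at $x$ is (up to rescaling of the reference measure) the Euclidean model $(\R^n,|\cdot|,\leb^n,0)$, and one has the disintegration $\meas=\theta\cdot\haus^n$ on $\mathcal R_n$ for a positive Borel density $\theta$. In particular, the rescaled spaces $(X,r_i^{-1}\dist,c_i\meas,x)$ pmGH-converge to $(\R^n,\leb^n,0)$ for an appropriate normalization.

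The crux is to upgrade the RCD$(K,N)$ assumption to RCD$(K,n)$ by exploiting the presence of Euclidean tangents almost everywhere. Heuristically, the $\meas$-a.e.\ appearance of $(\R^n,\leb^n)$ as tangent reflects an exhaustion of available synthetic dimensions, so the $\Gamma_2$/Bochner inequality that encodes the dimension $N$ should actually hold at the sharper level $n$. Technically, one performs a blow-up at a generic $n$-regular point, transports the $N$-dimensional Bochner estimate to the Euclidean tangent using stability of RCD under pmGH convergence, and then reverses the argument via heat-semigroup regularisation and the gradient-flow machinery to read off an improved inequality on $X$ itself. Once RCD$(K,n)$ has been established, the essential dimension is forced to equal the synthetic dimension $n=N$ (so $N$ is an integer), the Bishop inequality yields $\theta\le 1$ with equality on $\mathcal R_n$, and De Philippis--Gigli's criterion \cite{DPG17} delivers $\meas=\haus^n$, the desired noncollapsedness.

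The principal obstacle is the dimension-reduction step. Bishop--Gromov monotonicity alone yields only $N\ge n$, with no intrinsic mechanism forcing equality; $(\R^n,\leb^n)$ is a valid CD$(0,M)$ space for every $M\ge n$, so appearance of the Euclidean tangent does not by itself improve $N$. Overcoming this requires a careful interplay between the rectifiable structure of $X$ at $\meas$-a.e.\ point and refined functional/infinitesimal inequalities, allowing the lower-dimensional structure of the tangent cones to be pulled back to the ambient space via a suitable localization and a second-order analysis around regular points.
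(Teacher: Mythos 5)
This statement is not proved in the paper at all: it is quoted verbatim from Brena--Gigli--Honda--Zhu, where it is the resolution of the De Philippis--Gigli ``weakly non-collapsed implies non-collapsed'' conjecture, so the only argument the paper offers is the citation. Your proposal correctly reduces to the contrapositive (essential dimension $n=[N]$ should force $\meas=c\haus^{n}$) and correctly assembles the standard structure theory (Mondino--Naber rectifiability, Bru\`e--Semola constancy of dimension, $\meas$-a.e.\ Euclidean tangents $\R^{n}$, $\meas=\theta\haus^{n}$ on $\mathcal R_{n}$), but the step that carries all the content is exactly the one you flag as ``the principal obstacle'' and then describe only heuristically: upgrading the dimensional parameter (equivalently, proving constancy of the density $\theta$). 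Blow-up plus pmGH stability only transports information \emph{to} the tangent cone, which you already know is Euclidean; there is no general mechanism to ``reverse'' this and improve the Bochner/$\mathrm{BE}(K,N)$ inequality on $X$ itself, and indeed your own remark that $(\R^{n},\leb^{n})$ is $\CD(0,M)$ for every $M\ge n$ shows why a.e.\ Euclidean tangents alone cannot do it. So the sketch does not close the gap; what is missing is precisely the hard analytic core of the cited theorem.

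There is also a circularity at the end of the chain. Even if one granted the (unjustified) upgrade to $\RCD(K,n)$, the conclusion $\meas=c\haus^{n}$ from ``$\RCD(K,n)$ with $\mr{essdim}=n$'' is again exactly the statement being proved, and it is not supplied by \cite{DPG17}: De Philippis--Gigli \emph{conjectured} that weak non-collapsing implies $\meas=c\haus^{N}$, and what their paper provides are the converse-type facts used elsewhere in this paper (the volume-density characterization \eqref{eq:density}, Bishop-type inequalities for spaces already known to be noncollapsed). Likewise ``the Bishop inequality yields $\theta\le 1$ with equality on $\mathcal R_{n}$'' presupposes the noncollapsed structure you are trying to establish; the actual difficulty is to show $\theta$ is (locally) constant, which Brena--Gigli--Honda--Zhu achieve by a genuinely different and substantially more delicate argument than blow-up/stability. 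As written, then, the proposal is a plausible roadmap but not a proof; for the purposes of this paper the correct move is simply to invoke \cite{BrenaGigliHondaZhu}, as the author does.
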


 It follows from the stability of $\RCD(K,N)$ condition, see for example \cite{GMS13}, that Ricci limit spaces are $\RCD$ spaces. Furthermore, noncollapsed Ricci limit spaces are $\ncRCD$ spaces, because in \cite{DPG17}, the notion of noncollapsed Ricci limit spaces defined by Cheeger-Colding \cite{Cheeger-Colding97I} and notion of noncollapsed $\RCD$ spaces are unified in the following sense.

 \begin{theorem}\label{thm:ncRCD}
     Let $(X_i,\dist_i,\haus^N,x_i)$ be a sequence of pointed $\ncRCD(K,N)$ spaces. If $(X_i,\dist_i,x_i)$ pGH converges to $(X,\dist, x)$. Then exactly one the following happens.
     \begin{itemize}
         \item $\liminf_{i\to\infty}\haus^N(B_1(p))>0$, and $(X,\dist,\haus^N, x)$ is $\ncRCD(K,N)$. $(X_i,\dist_i,\haus^N, x_i)$ pmGH converges to $(X,\dist,\haus^N, x)$. In particular, the volume converges, i.e.,
         \[
        \lim_{i\to\infty}\haus^N(B_R(x_i))=\haus^N(B_R(x)), \quad \forall R>0.
         \]
         \item  $\liminf_{i\to\infty}\haus^N(B_1(p))=0$, and the Hausdorff dimension of $(X_i,\dist_i)$ is at most $N-1$.
     \end{itemize}
 \end{theorem}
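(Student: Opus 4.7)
The plan is to assemble pmGH precompactness, stability of $\RCD(K,N)$ under pmGH convergence, and the noncollapsed characterization of De~Philippis--Gigli. First, Bishop--Gromov applied to each $\ncRCD(K,N)$ space $(X_i,\dist_i,\haus^N)$ gives a uniform doubling bound on balls of bounded radius, so $(X_i,\dist_i,\haus^N,x_i)$ is pmGH-precompact: up to a subsequence it converges to some $(X,\dist,\meas,x)$ with $\meas$ the weak-$*$ limit of $\haus^N$ on bounded sets. Stability of $\RCD(K,N)$ guarantees $(X,\dist,\meas)$ is $\RCD(K,N)$, so the only remaining question is the relationship between $\meas$ and $\haus^N$, governed by the total mass of the unit ball.

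In the noncollapsing case $\liminf_i \haus^N(B_1(x_i))>0$, I would pass Bishop--Gromov's density upper bound to the limit to get $\Theta_N(\meas)(y)\le 1$ for every $y\in X$; combined with $\meas(B_1(x))>0$, the density-one characterization \eqref{eq:density} together with the noncollapsed criterion identifies $\meas$ with $\haus^N$, so $(X,\dist,\haus^N,x)$ is $\ncRCD(K,N)$. The volume continuity $\haus^N(B_R(x_i))\to \haus^N(B_R(x))$ then follows from weak-$*$ convergence applied to a continuous cutoff approximating $\mathbbm{1}_{B_R(x)}$, using that $\haus^N(\partial B_R(x))=0$ for a.e.~$R$ in an $\ncRCD$ space.

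In the collapsing case $\liminf_i \haus^N(B_1(x_i))=0$, the same weak-$*$ convergence yields $\meas(B_1(x))=0$, so $(X,\dist,\meas)$ is a collapsed $\RCD(K,N)$ space. Then Theorem \ref{thm:essdimgap} immediately gives $\mr{essdim}(X)\le N-1$. To upgrade to the Hausdorff dimension bound, I would use the Bishop--Gromov monotonicity of $r\mapsto \haus^N(B_r(y))/r^N$ on each $X_i$: a zero limit of volume ratios at the base forces zero $\haus^N$-density at every point in the limit, and the uniform doubling together with a standard covering argument (using balls of radius $\eps$ whose $\haus^N$-measure is controlled by $\eps^N$ times a vanishing constant) pushes the Hausdorff dimension of $(X,\dist)$ strictly below $N$, giving $\dim_{\haus}(X)\le N-1$. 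The main obstacle is the noncollapsing step, where identifying $\meas$ with $\haus^N$ rests on the nontrivial density-one characterization; by comparison, the collapsing case is largely bookkeeping once the essential dimension gap is invoked.
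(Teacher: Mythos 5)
This theorem is not proved in the paper at all: it is quoted as the De Philippis--Gigli unification \cite{DPG17} of Cheeger--Colding's noncollapsed limit theory with $\ncRCD$ spaces, so the real comparison is between your sketch and the actual content of \cite{DPG17}, and there your proposal has two genuine gaps. In the noncollapsing case, the step ``$\Theta_N(\meas)\le 1$ everywhere plus $\meas(B_1(x))>0$, hence $\meas=\haus^N$ by \eqref{eq:density}'' does not work: \eqref{eq:density} presupposes that the space is already known to be noncollapsed (its reference measure \emph{is} $\haus^N$) and then characterizes regular points; it cannot identify an abstract weak-$*$ limit measure with $\haus^N$. That identification is precisely the hard content being cited: either one proves volume convergence \`a la Colding/Cheeger--Colding (extended to the $\RCD$ setting in \cite{DPG17}), or one invokes the weakly-noncollapsed-implies-noncollapsed theorem of \cite{BrenaGigliHondaZhu}, which only yields $\meas=c\,\haus^N$ and still leaves you needing volume convergence to pin down $c=1$ and to get the stated continuity $\haus^N(B_R(x_i))\to\haus^N(B_R(x))$. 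Your Portmanteau/cutoff argument gives the easy comparison between $\meas$ and limits of volumes, not the equality of $\meas$ with the limit space's own Hausdorff measure.

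In the collapsing case the difficulties are worse. If you keep the unnormalized measures, their weak-$*$ limit is the zero measure on bounded sets, which is not an admissible reference measure, so ``$(X,\dist,\meas)$ is a collapsed $\RCD(K,N)$ space'' is not meaningful; stability requires renormalizing by $\haus^N(B_1(x_i))$, and then showing the renormalized limit is collapsed is not automatic. Even granting that, Theorem \ref{thm:essdimgap} bounds the \emph{essential} dimension by $N-1$, which does not bound the Hausdorff dimension: the paper itself recalls \cite{PanWeiHaus} that the singular set of a collapsed limit can have strictly larger Hausdorff dimension than the regular set. Finally, the covering argument you propose cannot yield $\dim_{\haus}(X)\le N-1$: Bishop--Gromov bounds small balls from below in terms of large balls, so when you estimate the number of $\eps$-balls needed to cover $B_R(x_i)$ the vanishing factor $\haus^N(B_1(x_i))$ cancels between numerator and denominator, and you only recover the doubling bound, i.e.\ $\dim\le N$. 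Worse, collapsing is not scale-invariant --- rescaling can un-collapse the sequence (this is exactly the Kapovitch--Wilking phenomenon \cite{KapovitchWilking} exploited elsewhere in this paper) --- so pointwise density arguments at small scales do not inherit the hypothesis $\haus^N(B_1(x_i))\to 0$. The dimension drop $\dim_{\haus}(X)\le N-1$ is a substantive theorem of \cite{DPG17} (after Cheeger--Colding), not bookkeeping, and your sketch does not supply its proof.
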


 \subsection{Monotonicity of Angles} 
See for example \cite{BBI01} or \cite{antonelli2023isoperimetric}*{section 2.1}. Let $(A,\dist_A)$ be an Alexandrov space of nonnegative curvature. Recall that given $2$ unit speed rays $\sigma, \gamma: [0,\infty)\to A$ emanating from the same point $o\in A$, we define the comparison angle between $\sigma(t)$, $\gamma(s)$, for $t,s>0$ at $o$, denoted $\angle^0 o_{\sigma(t)}^{\gamma(s)}$, as 
 \begin{equation}
     \angle^0 o_{\sigma(t)}^{\gamma(s)}\defeq\arccos \frac{\dist_A^2(o,\sigma(t))+\dist_A^2(o,\gamma(s))-\dist_A^2(\sigma(t),\gamma(s))}{2\dist_A(o,\sigma(t))\dist_A(o,\gamma(s))}.
 \end{equation}
$ (t,s)\mapsto \angle^0 o_{\sigma(t)}^{\gamma(s)}$ is monotone nonincresing w.r.t. both variables $t,s$ when fix the other. It follows that $t\mapsto\angle^0 o_{\sigma(t)}^{\gamma(t)}$ is also monotone nonincresing. The angle between $\sigma$ and $\gamma$, denoted $\angle(\sigma,\gamma)$, is defined as $\lim_{t,s\to 0} \angle^0 o_{\sigma(t)}^{\gamma(s)}$ and the angle between $\sigma$ and $\gamma$ at infinity, denoted by $\angle_{\infty}(\sigma,\gamma)$, is defined as $\lim_{t\to \infty} \angle^0 o_{\sigma(t)}^{\gamma(t)}$. Both $\angle$ and $\angle_{\infty}$ are distance functions on the set of all rays emanating from $o$. The (equivalence classes of) rays emanating from $o$ equipped with distance $\angle_{\infty}$ is an Alexandrov space of curvature lower bound $1$, which is called the ideal boundary of $(A,\dist_A)$ at $o$, when equipped with distance $\angle$ it is the space of directions at $o$ which is also an Alexandrov space of curvature lower bound $1$.

\subsection{Asymptotic cones and splitting for large scales}\label{subsec:AsymCone}
For a pointed metric space $(X,\dist, x)$, an asymptotic cone at $x$, whenever exists, is defined as a pGH limit of $(X,r_i^{-1}\dist, x)$ for some sequence of scales $r_i\to\infty$. Recall that if $(X,\dist,\meas)$ is an $\RCD(K,N)$ space, then for any $r>0$, $(X,r^{-1}\dist,\meas)$ is an $\RCD(r^2K,N)$ space. With the stability of $\RCD(K,N)$ condition it immeditely follows that by passing to a subsequence if necessary, asymptotic cones always exist at every point of an $\RCD(0,N)$ space and they are also $\RCD(0,N)$ spaces. An asymptotic cone of a $\ncRCD(0,N)$ space is noncollapsed if and only if the asymptotic volume ratio is not zero, due to Theorem \ref{thm:ncRCD} and a simple scaling. Recall \eqref{eq:AVR0}.

For Alexandrov spaces of nonnegative curvature, the asymptotic cone at any point is unique and it is a metric cone over its ideal boundary. See \cite{antonelli2023isoperimetric}*{Theorem 2.11} and references therein.

However, for $\RCD(0,N)$ spaces, as pointed out in the last paragraph of section \ref{sec:intro}, very little structure theory of their asymptotic cones is known. However, it is known that every nonconstant linear growth harmonic function induces a splitting factor $\R$ in the asymptotic cone. We state the original version for manifolds \cite{CheegerColdingMinicozzi}, nevertheless an extension to $\RCD$ setting is essentially done in \cite{honda2021sobolev}*{Theorem 4.8}.
 
\begin{theorem}\label{thm:AsymSplitting}
    Let $(M,g)$ be an Riemannian open $n$-manifold. If the dimension of the space of linear growth harmonic functions is $k+1$, $k\in \N^+$, then any asymptotic cone splits $\R^k$. 
\end{theorem}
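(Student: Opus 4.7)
The plan is to extract, from $k$ linearly independent nonconstant linear growth harmonic functions on $M$, the same number of independent harmonic functions with vanishing Hessian on any asymptotic cone $Y$, and then to apply the Cheeger--Colding splitting theorem to peel off an $\R^k$ factor.

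First I would fix a basis $u_1,\dots,u_k$ of the quotient of linear growth harmonic functions by constants, normalized so that $u_j(p)=0$. Yau's gradient estimate, combined with the linear growth bound $|u_j(x)|\le C_j(1+\dist_g(x,p))$, gives $\sup_M|\nabla u_j|<\infty$. For a sequence $r_i\to\infty$ realizing an asymptotic cone $Y=\lim(M,r_i^{-1}\dist_g,p)$, define $\tilde u_j^{(i)}\defeq r_i^{-1}u_j$ on the rescaled manifold. Then $\tilde u_j^{(i)}$ is harmonic with uniformly bounded gradient, and Cheeger--Colding convergence theory for harmonic functions on mGH-converging $\RCD$ sequences yields, after passing to a subsequence, a locally uniform limit $\bar u_j\colon Y\to\R$ which is harmonic.

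Second, I would show $\Hess\bar u_j\equiv 0$. The analytic input is that for any linear growth harmonic function $u$ on an $n$-manifold with $\Ric\ge 0$, a Bochner--Caccioppoli argument gives $\dashint_{B_R(p)}|\Hess u|^2\to 0$ as $R\to\infty$. Rescaled, this reads $\dashint_{B_1(p)}|\Hess\tilde u_j^{(i)}|^2\to 0$, so the limiting measure-valued Hessian of $\bar u_j$ on $Y$ vanishes. A single nonconstant such function forces a splitting $Y=\R\times Y'$ by the Cheeger--Colding splitting theorem.

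Finally, to produce $k$ independent splitting factors I would iterate this construction using a Gram--Schmidt procedure relative to the asymptotic inner product
\[
\langle u,v\rangle_\infty\defeq \lim_{R\to\infty}\dashint_{B_R(p)}\langle\nabla u,\nabla v\rangle\,d\vol_g.
\]
The main obstacle is verifying that $\langle\cdot,\cdot\rangle_\infty$ is well defined and positive definite on the quotient of linear growth harmonic functions by constants, so that the linear independence of the $u_j$'s on $M$ passes to linear independence of the limits $\bar u_j$ on $Y$. Equivalently, one has to exclude that a nontrivial linear combination of the $u_j$'s has asymptotically vanishing gradient average; this is the technical heart of the Colding--Minicozzi argument and, in the $\RCD$ setting, is encoded in the harmonic/affine decomposition of \cite{honda2021sobolev}*{Theorem 4.8}.
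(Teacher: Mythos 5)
First, note that the paper does not prove Theorem \ref{thm:AsymSplitting} at all: it is quoted from Cheeger--Colding--Minicozzi \cite{CheegerColdingMinicozzi} (with \cite{honda2021sobolev} for the $\RCD$ extension), so you are in effect re-deriving that theorem, and your outline follows its standard strategy (rescale, pass harmonic functions to the asymptotic cone, show the limits are splitting functions, iterate). Note also that the statement implicitly requires $\Ric_g\ge 0$, which you correctly build in through the gradient estimate, Bochner, and the splitting theorem.

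The genuine gap is in your second step. The Bochner--Caccioppoli argument (testing $\Delta|\nabla u|^2\ge 2|\Hess u|^2$ against a cutoff with $|\Delta\phi|\le CR^{-2}$) gives $\dashint_{B_R(p)}|\Hess u|^2\le CR^{-2}$, so your claim that this average tends to $0$ is true, but it does not rescale to what you assert. Under $g\mapsto r_i^{-2}g$, $u\mapsto r_i^{-1}u$ one has $|\Hess\tilde u_j^{(i)}|^2=r_i^{2}|\Hess u_j|^2$ and $B_1^{(i)}(p)=B_{r_i}(p)$, so $\dashint_{B_1(p)}|\Hess\tilde u_j^{(i)}|^2=r_i^{2}\dashint_{B_{r_i}(p)}|\Hess u_j|^2$, which the Caccioppoli bound only shows is \emph{bounded}, not vanishing. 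What you need is the scale-invariant decay $R^{2}\dashint_{B_R}|\Hess u|^2\to 0$, and the standard way to get it is to exploit $\int\Delta\phi\,\mr{d}\vol_g=0$ and write $2\int\phi|\Hess u|^2\le\int\bigl(|\nabla u|^2-\alpha\bigr)\Delta\phi$ with $\alpha\defeq\lim_{R\to\infty}\dashint_{B_R}|\nabla u|^2$, and then prove $\dashint_{B_{2R}}\bigl||\nabla u|^2-\alpha\bigr|\to 0$; the existence of $\alpha$ and this $L^1$-average convergence rest on $|\nabla u|^2$ being bounded and subharmonic, via mean-value and monotonicity arguments. This is exactly the same asymptotic-average machinery whose well-definedness and positive-definiteness you defer in your third step as ``the technical heart'': both the Hessian-vanishing step and the linear independence of the limits $\bar u_j$ hinge on it, and it is nowhere established in your proposal. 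As written, the argument therefore reduces the theorem to precisely the part of Cheeger--Colding--Minicozzi's proof that carries the difficulty; the remaining steps (passage of harmonicity to the limit, splitting from a nonconstant limit with vanishing Hessian, iterating with orthonormal gradients) are fine.
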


To close this subsection we recall the following splitting theorem for large scales and varying points by Kapovitch-Wilking \cite{KapovitchWilking}*{Lemma 2.1}, which serves as a basis of the rescaling theorem. We slightly extend it to the $\RCD$ setting.

\begin{lemma}\label{lem:splitLarge}
Let $(X_i,\dist_i,\meas_i,x_i)$ be a sequence of $\RCD(-\eps_i,N)$ spaces, where $\eps_i\to 0^+$. Given $k\in \N^+$, and $r_i\to \infty$, if there exists $(b^i_1,\ldots, b^i_k): B_{r_i}(x_i)\to \R^k$ so that the following is satisfied
\begin{itemize}
    \item For each $i\in \N$ and $j=1,2\ldots, k$ the function $b^i_j$ is in $D(\Delta, B_{r_i}(x_i))$, the domain of the local Laplacian on $ B_{r_i}(x_i)$. %harmonic and $C(N)$-Lipschitz for some $C(N)>0$ on $B_{r_i}(x_i)$.
    \item For any fixed $R>0$,
    \begin{equation}\label{eq:intzero}
         \dashint_{B_R(x_i)} \sum_{j,l=1}^k|\nabla b^i_j\cdot\nabla b^i_l-\delta_{j,l}|+R^2\sum_{j=1}^k(\Delta b^i_j)^2\mr{d}\meas_i\to 0, \quad\text{as $i\to \infty.$}
    \end{equation}
\end{itemize}
Then $(X_i, \dist_i,(\meas_i(B_1(x_i)))^{-1}\meas_i,x_i)$ pmGH subconverges to an $\RCD(0,N)$ space $(\R^k\times Y,(0,y))$ with product distance and some limit measure.
\end{lemma}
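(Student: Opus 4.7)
The plan is to adapt the Cheeger--Colding functional splitting argument to the $\RCD$ setting, using the stability of $\RCD$ under pmGH convergence, the Ambrosio--Honda theory of $H^{1,2}$-convergence of Sobolev functions, and Gigli's splitting theorem.

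First I would invoke the compactness theorem for $\RCD(-\eps_i,N)$ spaces with $\eps_i\to 0^+$ to extract, after passing to a subsequence, a pmGH limit $(X_\infty,\dist_\infty,\meas_\infty,x_\infty)$ of the renormalized sequence $(X_i,\dist_i,(\meas_i(B_1(x_i)))^{-1}\meas_i,x_i)$. The normalization guarantees $\meas_\infty(B_1(x_\infty))=1$ so that the limit measure is nontrivial, and stability yields that the limit is $\RCD(0,N)$. Since $r_i\to\infty$, the functions $b^i_j$ are eventually defined on balls of arbitrarily large radius around $x_i$, which is crucial for passing to a global limit.

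Next I would show convergence of the $b^i_j$ to limit functions on $X_\infty$. After subtracting the constants $b^i_j(x_i)$, the energy and Laplacian bounds in \eqref{eq:intzero} together with the Cheng--Yau / Jiang-type local gradient estimate on $\RCD(-\eps_i,N)$ spaces yield equi-Lipschitz bounds on compact sets; an Arzela--Ascoli / Mosco-type argument in the sense of Ambrosio--Honda then gives, along a diagonal subsequence, functions $b^\infty_j\in D(\Delta,X_\infty)$ with $b^i_j\to b^\infty_j$ locally uniformly and in $H^{1,2}$. Passing \eqref{eq:intzero} to the limit with the convergence of Cheeger energies and Laplacians gives $\Delta b^\infty_j=0$ and $\nabla b^\infty_j\cdot\nabla b^\infty_l=\delta_{j,l}$ $\meas_\infty$-a.e.\ on every ball, hence globally on $X_\infty$.

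Finally I would apply Gigli's splitting theorem. Each $b^\infty_j$ is harmonic on $X_\infty$ with $|\nabla b^\infty_j|\equiv 1$, so the Bochner inequality forces $\Hess b^\infty_j=0$; the integral curves of $\nabla b^\infty_j$ are then lines, and Gigli's splitting theorem produces an isometric and measure-preserving factor $\R$. The orthogonality $\nabla b^\infty_j\cdot\nabla b^\infty_l=\delta_{j,l}$ guarantees that the $k$ splittings are mutually independent, so inductively $(X_\infty,\dist_\infty,\meas_\infty)=(\R^k,|\cdot|,\leb^k)\otimes (Y,\dist_Y,\meas_Y)$ with product distance and product measure, and $x_\infty=(0,y)$ for some $y\in Y$. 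The main technical obstacle is step two: one must upgrade the average integral condition \eqref{eq:intzero} to pointwise relations $\Delta b^\infty_j=0$ and $|\nabla b^\infty_j|=1$ in the limit, which requires the full strength of Ambrosio--Honda's results on convergence of Laplacians and Cheeger energies under pmGH convergence together with a careful choice of a diagonal subsequence as $R\to\infty$.
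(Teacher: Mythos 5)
Your proposal is correct and follows essentially the same route as the paper: uniform $L^2$ bounds on gradients and Laplacians from \eqref{eq:intzero}, local $H^{1,2}$-convergence in the Ambrosio--Honda sense to harmonic limit functions with orthonormal gradients, and then Gigli's splitting theorem applied $k$ times. The only cosmetic difference is that you obtain Lipschitz control via gradient estimates before the limit, while the paper invokes the Sobolev-to-Lipschitz property afterwards to choose $1$-Lipschitz representatives.
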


We refer the readers to \cite{AHlocal} for relevant definitions.

\begin{proof}
    It follows from \eqref{eq:intzero} that for each $j$, $\|\nabla b^i_j\|_{L^2(B_R)}$ and $\|\Delta b^i_j\|_{L^2(B_R)}$ are uniformly bounded, so $b^i_j$ converges in $H^{1,2}_{\mr{loc}}$ \cite{AHlocal}*{Theorem 4.4} as $i\to\infty$ to a $b_j\in D(\Delta,Y)$. It again follows from \eqref{eq:intzero} that $b_j$ is harmonic and $|\nabla b_j|=1$, $\nabla b_j\cdot \nabla b_l=0$\ a.e. in the limit measure for $j\neq l$. The Sobolev-to-Lipschitz property shows that $b_j$ can also be chosen to be $1$-Lipschitz. The conclusion follows from the splittiing theorem \cite{Gigli_splitting}.
\end{proof}

%It extends to $\RCD$ setting because of \cite{BNS20}*{Theorem 3.4}. Indeed, it is observed in \cite{BNS20}*{Remark 3.1} that the Lipschitz bound whence exists is improved to almost $1$. Then $b^i_j$ converges (in $H^{1,2}_{\mr{loc}}$, see \cite{AHlocal}*{Theorem 4.4} ) to a $1$-Lipschitz harmonic function $b_j$ with $\nabla b_j\cdot \nabla b_l=0$\ a.e. in the limit measure.

\subsection{Aspherical manifolds and asymptotic dimension}
 A manifold is asperical if all of its higher homotopy groups $\pi_k$, $k\ge2$, vanish. For dimension $2$, $\mb S^2$, $\R P^2$ are only closed surfaces that are not aspherical. It is conjectured by Gromov-Lawson that closed aspherical manifolds do not admit any metric of positive scalar curvature. Some partial progresses are known \cites{Chodosh_Li_Soapbubble, gromov2020NoPcs5D}. With an additional assumption on the fundamental group, the following is proved.

\begin{theorem}[\cite{hypersphericityDra}*{p.157 Corollary}]\label{thm:asdimpsc}
    Let $M$ be a closed aspherical manifold. If the fundamental group $\pi_1(M)$ as a metric space equipped with the word metric has finite asymptotic dimension. Then $M$ does not admit a metric of positive scalar curvature.
\end{theorem}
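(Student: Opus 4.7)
The plan is to show that the universal cover $\tilde M$ is \emph{hypereuclidean} in the sense of Gromov---that is, it admits proper Lipschitz maps to Euclidean space of arbitrarily small Lipschitz constant and nonzero degree at infinity---and then invoke the Gromov--Lawson/Rosenberg obstruction to positive scalar curvature. Since $M$ is closed aspherical, it is a finite-dimensional model for $B\Gamma$ and $\tilde M$ is a contractible $n$-manifold on which $\Gamma$ acts freely, properly and cocompactly; in particular, with a $\Gamma$-invariant lift of any reference metric, $\tilde M$ is coarsely equivalent to $\Gamma$ with a word metric and serves as a uniformly contractible model of $E\Gamma$.

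The key construction exploits $d := \mathrm{asdim}(\Gamma) < \infty$. For each $R > 0$, I would choose a $\Gamma$-invariant cover $\mathcal U_R$ of $\tilde M$ by uniformly bounded sets of multiplicity $\le d+1$ and Lebesgue number $\ge R$; an equivariant partition of unity subordinate to $\mathcal U_R$ produces an equivariant $O(1/R)$-Lipschitz map $\phi_R \colon \tilde M \to |\mathcal N_R|$ to the nerve, a uniformly locally finite simplicial complex of dimension $\le d$. Asphericity and the uniform contractibility of $|\mathcal N_R|$ together yield an equivariant Lipschitz map $\psi_R\colon |\mathcal N_R| \to \tilde M$ with $\psi_R \circ \phi_R$ equivariantly and properly homotopic to $\mathrm{id}_{\tilde M}$ through a bounded-displacement homotopy. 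One then concludes by one of two routes: either isometrically embed $|\mathcal N_R|$ into some $\R^N$ to exhibit $\tilde M$ as hypereuclidean (the direct Gromov--Lawson route, ruling out PSC via the Dirac index), or, more functorially, invoke Yu's theorem that finite asymptotic dimension implies injectivity of the analytic assembly map $\mu\colon K_n(B\Gamma) \to K_n(C^*_r\Gamma)$, so that the Rosenberg index $\alpha(M) = \mu([M]_K)$ is nonzero and obstructs PSC via Lichnerowicz.

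The main obstacle is twofold. First, one must bridge the dimensional mismatch between the $d$-dimensional nerve and the $n$-dimensional fundamental class $[M]_K \in K_n(B\Gamma)$; the equivariant section $\psi_R$ and the bounded-displacement homotopy are tailored so that their composition preserves $[M]_K$ through the coarse assembly, but verifying this cleanly requires the machinery of controlled/coarse algebraic topology and a careful tracking of the degree through the nerve. Second, and more seriously, the Lichnerowicz/Rosenberg step requires a spin structure, whereas a general closed aspherical $M$ need not be spin; closing this gap demands either passing to a spin finite cover (when available) or invoking Stolz's surgery-theoretic reduction together with the $KO$-valued index. This non-spin reduction, rather than the asymptotic-dimension input itself, is the technically most intricate piece of the argument.
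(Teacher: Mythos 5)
The paper does not prove this statement at all: it is quoted verbatim from Dranishnikov (the cited p.~157 Corollary), whose main theorem is precisely that a uniformly contractible manifold of bounded geometry with finite asymptotic dimension is hyperspherical, combined with the Gromov--Lawson Dirac obstruction on the universal cover. Your overall strategy (finite $\mr{asdim}\Rightarrow$ the universal cover admits arbitrarily small-Lipschitz proper maps of nonzero degree to Euclidean space $\Rightarrow$ no PSC, or alternatively Yu-type injectivity of the assembly map) is the same one used in the literature, but as written your proposal has two genuine gaps. The first is the crucial one: the passage from the nerve construction to hypersphericity is exactly the hard content of Dranishnikov's theorem, and you do not supply it. The nerves $|\mc N_R|$ have dimension $\le d=\mr{asdim}(\Gamma)$, which in general is unrelated to $n$, and composing $\phi_R$ with a section $\psi_R$ gives you back maps $\tilde M\to\tilde M$, not proper small-Lipschitz maps $\tilde M\to\R^n$ of nonzero degree; ``careful tracking of the degree through the nerve'' is precisely what needs a new idea (Dranishnikov's argument goes through cohomological-dimension/coarse-topological input, not merely an equivariant partition of unity), so acknowledging the mismatch does not close it.

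The second issue is that your ``more serious'' obstacle is not an obstacle, and the fix you propose would not work. Since $M$ is aspherical, $\tilde M$ is contractible, so $H^1(\tilde M;\mb Z/2)=H^2(\tilde M;\mb Z/2)=0$ and $\tilde M$ is spin; a PSC metric on the closed manifold $M$ lifts to a complete, bounded-geometry metric on $\tilde M$ with uniformly positive scalar curvature, and the Gromov--Lawson (equivalently, coarse index) obstruction is applied to the Dirac operator on $\tilde M$, never on $M$. Hence no spin hypothesis on $M$ is needed, whereas your suggested remedies are either unavailable (a spin finite cover need not exist, since $w_2$ can survive in every finite cover) or misapplied (Stolz's surgery reduction concerns when the index is the only obstruction for spin manifolds, not how to remove the spin hypothesis). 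The same remark repairs your second route: rather than the Rosenberg index $\alpha(M)\in K_n(C^*_r\Gamma)$, which presupposes $M$ spin, one should use Yu's theorem that finite asymptotic dimension implies the coarse Baum--Connes conjecture and apply it to the uniformly contractible spin manifold $\tilde M$, where nonvanishing of the coarse fundamental class rules out uniformly positive scalar curvature. With those two repairs your outline matches the cited proof, but as it stands the decisive step (hypersphericity) is assumed rather than proved, and the spin discussion points in the wrong direction.
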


We introduce the notion of asympototic dimension following closely the survey \cite{asdimsurvey}, see also the references therein for the motivations and history of this definition. 

\begin{definition}[\cite{asdimsurvey}*{p.1270 Definition}]\label{def:asdim}
    Let $X$ be a metric space. For $n\in N$, we say the asymptotic dimension of $X$ does not exceed $n$, if for every uniformly bounded open cover $\mc V$, there exist a uniformly bounded open cover $\mc U$ with multiplicity at most $n+1$ so that $\mc V$ is a refinement of $\mc U$. We define the asymptotic dimension of $X$ as $\mr{asdim}(X)\defeq\min\{n\in \N:\text{asymptotic dimension of $X$ does not exceed $n$}\}$.
\end{definition}

Here the multiplicity of an open cover $\mc U$ is the maximum integer $k$ so that every $x\in X$ is contained in the intersection of at most $k$ open sets in $\mc U$. We say an open cover $\mc U$ is uniformly bounded if $\sup_{U\in \mc U}\mr{diam}(U)<\infty$. When it comes to the asymptotic dimension of a group, then it is viewed as a metric space endowed with the word metric.

%\begin{example}
   % Any bound metric space $X$ has $\mr{asdim}(X)=0$, by taking $\mc U=\{X\}$. %$\mr{asdim}(\mb Z)=1$. First, $\mr{asim}(\mb Z)\neq 0$, take for example $\mc V=\{\{i,i+1\}_{i\in \mb Z}\}$. If there is a $\mc U$ with multiplicity $1$ so that $\mc V$ refines $\mc U$. Take $V_0=\{0,1\}$, $V_1=\{1,2\}$, then there exists $U_1$, $U_2$ so that $0,1\in U_1$, $1,2\in U_2$, $1\in U_1\cap U_2$ implies $U_1=U_2$, by induction we find that $U_1$ has unbounded diameter. 
%\end{example}

 We collect the following properties of the asymptotic dimension that will be used in the proof of Theorem \ref{thm:betti}.

\begin{proposition}\label{prop:asdim}
    The following are true.
\begin{enumerate}
    \rm{\item\label{item:Z}(\cite{asdimsurvey}*{p.1271 Example})} $\mr{asdim}(\mb Z)=1$;
    \rm{\item\label{item:product}(\cite{asdimsurvey}*{Theorem 32})} If $X,Y$ are metric spaces. Then $\mr{asdim}(X\times Y)\le \mr{asdim}(X)+\mr{asdim}(Y)$;
    \rm{\item\label{item:FiniteExtension}(\cite{asdimsurvey}*{Corollary 54 (1)})} If $\Gamma$ is a finitely generated group and $\Gamma'\le \Gamma$ is a subgroup of finite index, then $\mr{asdim}(\Gamma')=\mr{asdim}(\Gamma)$;
    \rm{\item\label{item:sum}(\cite{asdimsurvey}*{Theorem 63})} If there is an exact sequence of groups
    \begin{equation*}
        1\rightarrow  F\rightarrow G \rightarrow H\rightarrow 1,
    \end{equation*}
    where $G$ is finitely generated, then $\mr{asdim}(G)\le \mr{asdim}(F)+\mr{asdim}(H)$
    \rm{\item\label{item:hyperbolic}(\cite{asdimsurvey}*{Theorem 87}}) Every finitely generated hyperbolic group has finite asymptotic dimension. 
    
\end{enumerate}
\end{proposition}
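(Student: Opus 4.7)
The plan is to verify each of the five items individually. All are standard facts drawn from the asymptotic-dimension literature cited in the statement, so the proposal is mainly to sketch the key construction for the items one can handle by hand and to cite deeper results for the rest. Throughout I will use the characterization in Definition \ref{def:asdim} directly rather than any of the equivalent reformulations.

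For item \eqref{item:Z}, I would exhibit explicit uniformly bounded covers realizing multiplicity $2$. Fix any uniformly bounded open cover $\mc V$ and set $N$ larger than twice the supremum of its diameters. Then the intervals $U_k = (kN - 1, kN + N + 1)$, $k \in \mb Z$, form a uniformly bounded open cover of $\mb Z$ of multiplicity $2$, and $\mc V$ refines it. This gives $\mr{asdim}(\mb Z) \le 1$; the lower bound $\ge 1$ is immediate since $\mb Z$ is unbounded and hence not of asymptotic dimension $0$. For item \eqref{item:product}, given uniformly bounded open covers $\mc U_X$ of $X$ and $\mc U_Y$ of $Y$ with multiplicities $\mr{asdim}(X)+1$ and $\mr{asdim}(Y)+1$ respectively, the naive product cover has multiplicity up to the product of these numbers. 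I would improve this with the standard coloring trick: partition each factor's cover into color classes of pairwise disjoint elements (at $\mr{asdim}(X)+1$ and $\mr{asdim}(Y)+1$ colors), so the induced cover of $X \times Y$ can be organized into $\mr{asdim}(X) + \mr{asdim}(Y) + 1$ disjoint classes after a slight perturbation of scales, yielding the desired bound.

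For item \eqref{item:FiniteExtension}, the point is that asymptotic dimension is a quasi-isometry invariant: a finite-index subgroup $\Gamma' \le \Gamma$ with word metric is quasi-isometric to $\Gamma$ with its word metric, since coset representatives can be chosen boundedly. Hence $\mr{asdim}(\Gamma') = \mr{asdim}(\Gamma)$. For item \eqref{item:sum}, I would appeal to the Bell--Dranishnikov theorem: for a short exact sequence of finitely generated groups, one lifts a cover of $H$ to a cover of $G$ by preimages and refines each preimage by controlled covers of $F$, producing a cover of $G$ whose multiplicity is bounded in terms of the two asymptotic dimensions. For item \eqref{item:hyperbolic}, I would cite Roe's theorem; the argument builds a sequence of uniformly bounded covers from the hyperbolic geometry using the boundary at infinity, which is beyond the scope of a sketch here.

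The main obstacle, if one wanted to reprove these from scratch, would be item \eqref{item:sum}, which requires the fiberwise refinement argument tailored to group extensions, and item \eqref{item:hyperbolic}, whose proof genuinely uses $\delta$-hyperbolicity. However, for the present use of Proposition \ref{prop:asdim} in the rigidity part of Theorem \ref{thm:betti}, one only needs these as black boxes, so a proposal that defers to the references is appropriate and sufficient.
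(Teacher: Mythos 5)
Your treatment matches the paper's: Proposition \ref{prop:asdim} is stated there purely as a list of quoted facts with citations to the Bell--Dranishnikov survey, and no proof is given, so deferring to the references (with optional sketches) is exactly the intended route. Your sketches are essentially correct; the only slip is in the explicit cover of $\mb Z$, where the overlap of the intervals $U_k=(kN-1,kN+N+1)$ has fixed length $2$, so a set $V\in\mc V$ of large diameter sitting across a junction need not lie in a single $U_k$ --- the overlap must be taken comparable to $\sup_{V\in\mc V}\mr{diam}(V)$ (e.g.\ $U_k=(kN-D-1,kN+N+D+1)$ with $N>2D+2$), after which the multiplicity-$2$ and refinement claims go through and the argument is fine.
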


\section{Fibration for large first Betti number}\label{sec:Betti}
We discuss some topological properties of Ricci limit (or $\RCD$) spaces. The aim is to prove Theorem \ref{thm:fibration}. It is motivated by the following question studied and disproved (for $b_1<\mr{dimension}$) by Anderson \cite{AndersonFibration}.
\begin{question}
    Given a closed Riemannian $n$-manifold $(M,g)$, is there an $\eps>0$ so that if $\Ric_g\ge -\eps$ and $\mr{diam}(M,\dist_g)\le 1$, then $M$ is a fiber bundle over $\mb T^{b_1(M)}$?
\end{question}

This fibration-over-torus problem was also studied earlier in \cites{YamaguchiAlmostRic,YamaguchiCollapsingPinching,BingWangBetti}, under various curvature bounds and extra assumptions. Anderson's counterexample \cite{AndersonFibration} shows that even when $|\Ric_g|\le \eps$, a fibration over $\mb T^{b_1(M)}$ for any $b_1(M)<n$ may not exist if the volume is collapsing, nevertheless Theorem \ref{thm:fibration} confirms that if we add the volume noncollpased assumption and a Ricci upper bound, for large $b_1$, we still retain the fibration over $\mb T^{b_1}$.

We recall that the universal cover of an $\RCD$ space, hence a Ricci limit space, is defined (by satisfying the universal property of covering spaces) and studied in \cite{MondinoWeiUni}. It is now known that the definition of the universal cover for $\RCD$ spaces coincides with the one for manifolds, proved by Wang.

\begin{theorem}[\cites{Wang_RicciSemilocal,wangRCDsemilocal}]\label{thm:Deck=fund}
Let $(X,\dist,\meas)$ be an $\RCD(K,N)$ space. Then the deck transform group of the universal cover $(\tilde X,\tilde\dist,\tilde \meas)$ is isomorphic to $\pi_1(X)$. Moreover, let $(X_i,\dist_i,\meas_i)$ be a sequence of compact $\RCD(K,N)$ spaces with $\sup_i\mr{diam}_{\dist_i}(X_i)<D<\infty$. If $(X_i,\dist_i,\meas_i)$ mGH converges to $(X,\dist,\meas)$, then there is a surjective homomorphism $\pi_1(X_i)\to \pi_1(X)$.
\end{theorem}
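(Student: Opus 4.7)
My plan is to establish the isomorphism $\operatorname{Deck}(\widetilde X/X) \cong \pi_1(X)$ by showing that every $\RCD(K,N)$ space is semi-locally simply connected (SLSC), so that the abstract universal cover constructed in \cite{MondinoWeiUni} (satisfying the universal property among covers) coincides with the classical universal cover and standard covering space theory delivers the identification. For the convergence statement I would adapt the Sormani--Wei $\delta$-cover technique to the RCD setting and produce the homomorphism $\pi_1(X_i) \to \pi_1(X)$ via controlled approximation of loops through the mGH approximations.

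For SLSC, fix $x \in X$ and consider the $\delta$-cover $\widetilde X^\delta \to X$, namely the regular cover whose deck group is the normal closure in $\pi_1(X)$ of loops contained in some metric ball of radius $\delta$. The family $\{\widetilde X^\delta\}_{\delta > 0}$ forms an inverse system whose limit is the abstract universal cover, and SLSC is equivalent to its stabilization as $\delta \to 0^+$. I would establish this stabilization by invoking the local topology of $\RCD(K,N)$ spaces: near a regular point, the $\delta$-splitting maps from the almost splitting theorem provide bi-H\"older charts onto Euclidean balls, so small balls about regular points are simply connected. Since $\mc R(X)$ is dense and $\RCD$ geodesics pass through regular interior points, any sufficiently small loop can be broken into a chain of arcs, each pushed into such a regular ball and contracted there, which shows triviality in the fundamental group of a slightly larger ball. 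Once SLSC holds, $\widetilde X$ is the classical universal cover and $\operatorname{Deck}(\widetilde X/X) \cong \pi_1(X)$ follows.

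For the surjective homomorphism, fix $\epsilon_i$-mGH approximations $\phi_i : X_i \to X$ with $\epsilon_i \to 0$. Given $[\alpha_i] \in \pi_1(X_i)$, represent $\alpha_i$ by a broken geodesic with consecutive vertices at distance $\le \epsilon_i$, map the vertices via $\phi_i$, and reconnect them by short geodesics in $X$ to obtain a loop $\alpha$; this is the candidate assignment $[\alpha_i] \mapsto [\alpha]$. A uniform SLSC for the family $\{X_i\}$, namely a uniform stabilization radius $\delta_0 = \delta_0(K,N,D)$, would make this assignment well-defined on homotopy classes and a homomorphism for $i$ large; surjectivity then follows by pulling back a representative of any $[\beta] \in \pi_1(X)$ to $X_i$ through the reverse GH approximation, the same uniform SLSC ensuring the result is a legitimate preimage up to the unavoidable discretization error. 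The main obstacle is precisely this \emph{uniform} SLSC across the converging sequence: pointwise SLSC follows from the local structure theory above, but extracting a stabilization scale $\delta_0$ that works simultaneously for all $X_i$ requires a quantitative compactness argument built on the uniform almost splitting and volume estimates available under the $\RCD(K,N)$ hypothesis with a diameter bound, together with a contradiction/limiting argument that transfers the local contractibility from the limit back to the approximating sequence.
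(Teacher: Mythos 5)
This theorem is not proved in the paper at all: it is quoted verbatim from Wang's work \cites{Wang_RicciSemilocal,wangRCDsemilocal}, so what you are proposing is a re-derivation of those results, and as it stands the sketch has genuine gaps exactly where Wang's papers do the hard work. First, your reduction ``SLSC is equivalent to stabilization of the $\delta$-covers'' is not correct: stabilization of $\wti X^\delta$ only yields \emph{existence} of a universal cover (this is already Sormani--Wei/Mondino--Wei \cite{MondinoWeiUni}), whose deck group is $\pi_1(X)/H$ with $H$ the subgroup generated by arbitrarily small loops; the statement $\mr{Deck}(\wti X)\cong\pi_1(X)$ requires $H$ to be \emph{trivial}, i.e.\ genuine semi-local simple connectedness, which is strictly stronger. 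Second, the step you use to get triviality of small loops fails in the collapsed case: bi-H\"older charts onto Euclidean balls at regular points come from the intrinsic Reifenberg theorem and require GH-closeness to a ball in $\R^N$ of \emph{top} dimension (noncollapsed setting). In a general $\RCD(K,N)$ space a $k$-regular point with $k=\mr{essdim}(X)<N$ only has small balls GH-close to balls in $\R^k$, which carries no topological information; local simple connectedness of $\RCD$ (or even Ricci limit) spaces is not known, which is precisely why Wang's proof is a delicate contradiction/rescaling argument using equivariant GH convergence of (relative) $\delta$-covers rather than local charts. Also, contracting each arc of a chained decomposition inside a regular ball does not assemble into a null-homotopy of the loop without a good-cover/nerve argument that you do not have.

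On the second assertion, your diagnosis of the difficulty is misplaced: the surjection $\pi_1(X_i)\to\pi_1(X)$ does not require any uniform SLSC along the sequence. Once the compact limit $X$ is known to be semi-locally simply connected, compactness gives a single scale $\delta_0>0$ at which all loops in $\delta_0$-balls of $X$ are null-homotopic in $X$, and the classical discrete-loop argument (Tuschmann, Sormani--Wei) then makes your pushed-forward assignment well defined, a homomorphism, and surjective for $i$ large; only the limit space's scale enters. So the entire content of the theorem sits in the first part, and your proposal does not yet contain a proof of it; citing \cites{Wang_RicciSemilocal,wangRCDsemilocal}, as the paper does, or reproducing Wang's rescaling argument, is what is actually needed.
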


\begin{remark}\label{rem:lowerBetti}
     A homomorphism takes commutators to commutators, so the sujective homomorphism $\pi_1(X_i)\to \pi_1(X)$ induces also a sujective homomorphism $H_1(X_i)\to H_1(X)$, meaning the first Betti number is lower semicontinuous under mGH convergence and uniform upper diameter bound.
\end{remark}

 On the other hand, \cite{RodriguezFundamental}*{Theorem 5} shows that for noncollapsed spaces converging to a noncollapsed space, the lower bound on the first Betti number does not decrease.
 
 \begin{theorem}[\cite{RodriguezFundamental}*{Theorem 5}]\label{thm:upperBetti}
     Let $(X_i,\dist_i,\meas_i)$ be a sequence of compact $\RCD(K,N)$ spaces with $\mr{essdim}(X_i)=n\le N$, $\sup_i\mr{diam}_{\dist_i}(X_i)<D<\infty$ and $b_1(X_i)\ge r$. If $(X_i,\dist_i,\meas_i)$ mGH converges to $(X,\dist,\meas)$ with $\mr{essdim}(X)=m\le n$, then $b_1(X)\ge r+ m-n$.
 \end{theorem}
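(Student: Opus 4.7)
The plan is to reduce the claim to a dimension count on equivariant pmGH limits of abelian covers, the heuristic being that a collapse of dimension $n-m$ can absorb at most $n-m$ of the $r$ independent deck translations of a $\mathbb{Z}^r$-cover of $X_i$, leaving at least $r+m-n$ discrete translations to descend to first homology classes of $X$.

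For each $i$, use $b_1(X_i)\ge r$ to produce a surjection $\pi_1(X_i)\twoheadrightarrow \mathbb{Z}^r$ through the free part of the abelianization, and let $\hat X_i \to X_i$ be the associated regular cover with deck group $\Lambda_i \cong \mathbb{Z}^r$. These covers are $\RCD(K,N)$ of essential dimension $n$ with uniformly bounded fundamental-domain diameter. After choosing lifts of the basepoints and passing to a subsequence, extract an equivariant pmGH limit $(\hat X_i, \Lambda_i, \hat x_i) \to (\hat X, G, \hat x)$, with $G$ a closed abelian subgroup of $\mathrm{Isom}(\hat X)$ and $\hat X/G = X$. Since isometry groups of $\RCD$ spaces are Lie groups, $G^0$ is a torus $T^s$; after passing to its effective quotient we may assume that the $T^s$-action has finite generic stabilizers. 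By Theorem \ref{thm:lsc} one has $\mathrm{essdim}(\hat X)\le n$, while $\hat X/G^0 \to X$ is a discrete quotient with $\mathrm{essdim}(\hat X/G^0)=m$, so the orbit-dimension formula gives $s=\mathrm{essdim}(\hat X)-m\le n-m$.

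It remains to bound from below the rank of the image of $\Lambda \cong \mathbb{Z}^r$ in the discrete abelian quotient $G/G^0$. Set $K \defeq \ker(\Lambda \to G/G^0)$: the elements of $K$ are precisely the deck translations whose actions collapse into the torus direction $G^0$. Rescaling $\hat X_i$ so that the shortest nontrivial element of $K$ has unit translation length and applying the splitting theorem to the rescaled limit (in the spirit of Cheeger--Fukaya--Gromov nilpotent collapse), one obtains a limit that splits off $\mathbb{R}^s$ on which $K$ acts as a discrete translation group, forcing $\mathrm{rank}(K)\le s\le n-m$. Hence the image of $\Lambda$ in $G/G^0$ has rank at least $r-s\ge r+m-n$; this discrete abelian quotient factors through $\pi_1(X)\twoheadrightarrow G/G^0$ by Theorem \ref{thm:Deck=fund}, and abelianizing yields $b_1(X)\ge r+m-n$. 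The main obstacle is the rank bound $\mathrm{rank}(K)\le s$: a priori $K$ could map to a dense subgroup of $T^s$ of arbitrarily large $\mathbb{Z}$-rank, and ruling this out requires the rescaling/splitting argument to be carried out carefully in the $\RCD$ setting, leveraging equivariant pmGH compactness together with the Lie-group structure of $\mathrm{Isom}(\hat X)$.
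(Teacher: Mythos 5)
First, a point of comparison: the paper does not prove this statement at all --- it is quoted from \cite{RodriguezFundamental}*{Theorem 5} --- so there is no internal proof to measure your argument against; it has to stand on its own. Your outline does follow the strategy one would expect (equivariant pmGH limits of the $\mathbb{Z}^r$-covers, a dimension count for the identity component $G^0$ of the limit group, and a rank bound for the ``collapsed'' part of the deck group), but as written it has genuine gaps, the most serious of which you flag yourself and do not close: the bound $\mathrm{rank}(K)\le s\le n-m$. This is the heart of the theorem, and the sentence ``rescaling so that the shortest nontrivial element of $K$ has unit translation length \dots one obtains a limit that splits off $\mathbb{R}^s$ on which $K$ acts as a discrete translation group'' is an assertion, not an argument: there is no reason, without proof, that the rescaled limit splits $\mathbb{R}^s$ or that limits of short deck transformations act as translations along split lines; establishing this requires an inductive short-basis/splitting argument in the spirit of Fukaya--Yamaguchi, or a Kapovitch--Wilking type rescaling as in Lemma \ref{lem:splitLarge} and Theorem \ref{thm:rescaling}, carried out along the sequence --- where your group $K$ does not even live.

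That points to the second gap: $K=\ker(\Lambda\to G/G^0)$ is not well defined, because equivariant GH convergence does not furnish a homomorphism $\Lambda_i\to G$; an element $\gamma_i\in\Lambda_i$ has a limit in $G$ only if its displacement at the basepoint stays bounded, and generators of $\Lambda_i\cong\mathbb{Z}^r$ may have displacement tending to infinity. The correct object is the subgroup of $\Lambda_i$ generated by elements of small displacement, selected via a Gromov short basis along the sequence, and the rank bound must be proved for that subgroup. Two further inaccuracies: $G^0$ is a connected abelian Lie group, hence of the form $\mathbb{R}^a\times \mathbb{T}^b$, not a torus in general (collapse of a flat two-torus to a circle already gives $G^0=\mathbb{R}$) --- harmless for the dimension count $s\le n-m$, but the ``torus, finite generic stabilizer'' reasoning should be reformulated; and the final surjection $\pi_1(X)\twoheadrightarrow G/G^0$ does not follow from Theorem \ref{thm:Deck=fund}, which identifies the deck group of the universal cover of a fixed $\RCD$ space with its fundamental group. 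What is needed is the Fukaya--Yamaguchi covering-space argument for $\hat X/G^0\to X$, together with a treatment of the (finite) isotropy of the $G/G^0$-action; since $G$ is abelian the isotropy only contributes torsion and so does not lower the free rank, but this step has to be made explicit before abelianizing to conclude $b_1(X)\ge r+m-n$.
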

 
 Combining Remark \ref{rem:lowerBetti} and Theorem \ref{thm:upperBetti} results in the following. 
 
 \begin{proposition}\label{prop:stableBetti}
     Let $(X_i,\dist_i,\haus^N)$ be $\ncRCD(K,N)$ with $\sup_i\mr{diam}_{\dist_i}(X_i)<D<\infty$ and $\inf_i \haus^N(X_i)>v>0$. If $(X_i,\dist_i,\haus^N)$ GH converge to $(X,\dist,\haus^N)$, then $ \liminf_{i\to \infty}b_1(X_i)=b_1(X)$.
 \end{proposition}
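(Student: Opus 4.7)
The plan is to combine Remark \ref{rem:lowerBetti} and Theorem \ref{thm:upperBetti} after first upgrading the convergence to the non\-collapsed setting. First, I would observe that the uniform diameter bound $D$ together with the total volume lower bound $v$ forces, for every $x_i \in X_i$, the equality $B_D(x_i) = X_i$ and hence $\haus^N(B_D(x_i)) \geq v$. Picking basepoints $x_i \in X_i$ (which, up to subsequence, converge by compactness of the limit), the sequence is uniformly volume noncollapsed in the sense of Theorem \ref{thm:ncRCD}, so the second alternative there is excluded. Therefore the GH convergence automatically promotes to pmGH convergence, the limit $(X,\dist,\haus^N)$ is $\ncRCD(K,N)$, and in particular $\mr{essdim}(X_i)=\mr{essdim}(X)=N$.

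For the inequality $\liminf_{i\to\infty} b_1(X_i) \geq b_1(X)$, I would invoke Remark \ref{rem:lowerBetti}: for $i$ large, the surjective homomorphism $\pi_1(X_i) \twoheadrightarrow \pi_1(X)$ descends to a surjective homomorphism $H_1(X_i) \twoheadrightarrow H_1(X)$ of abelian groups, and surjections of finitely generated abelian groups do not increase rank, so $b_1(X_i) \geq b_1(X)$ eventually. Taking $\liminf$ gives the desired bound.

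For the reverse inequality $b_1(X) \geq \liminf_{i\to\infty} b_1(X_i)$, note that $b_1(X_i)$ takes values in $\N$, so I can pass to a subsequence (still denoted $X_i$) along which $b_1(X_i)$ is constantly equal to $r \defeq \liminf_{i\to\infty}b_1(X_i)$. Then Theorem \ref{thm:upperBetti}, applied with the matched essential dimensions $n=m=N$ established in the first step, yields
\[
b_1(X) \;\geq\; r + m - n \;=\; r \;=\; \liminf_{i\to\infty}b_1(X_i).
\]
Combining the two inequalities produces the equality $\liminf_{i\to\infty}b_1(X_i) = b_1(X)$.

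I do not expect a substantial obstacle: the content of the statement is essentially the bookkeeping that the semicontinuity from Remark \ref{rem:lowerBetti} and the semicontinuity from Theorem \ref{thm:upperBetti} point in opposite directions and meet exactly when the essential dimensions of the sequence and the limit coincide. The only point requiring attention is verifying that the noncollapsing hypothesis guarantees this matching of essential dimensions, which is what the initial application of Theorem \ref{thm:ncRCD} secures.
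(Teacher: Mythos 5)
Your proof is correct and follows essentially the same route as the paper: extract a subsequence along which $b_1(X_i)$ stabilizes at $r=\liminf_i b_1(X_i)$, then combine the surjectivity of $H_1(X_i)\to H_1(X)$ from Remark \ref{rem:lowerBetti} with Theorem \ref{thm:upperBetti} (with $m=n=N$) to squeeze $b_1(X)=r$. Your explicit preliminary step checking via Theorem \ref{thm:ncRCD} that the convergence is noncollapsed, hence that the essential dimensions match, is left implicit in the paper but is a sound and welcome clarification.
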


 \begin{proof}
     Let $r=\liminf_i b_1(X_i)$. There exists a subseqence of $(X_i,\dist_i,\haus^N)$ (which converges to the same limit) we do not relabel, so that $b_1(X_i)=r$ for large $i$, because a convergent sequence of positive finite integers must stablize. It follows from Remark \ref{rem:lowerBetti} that $ b_1(X)\le r$ and Theorem \ref{thm:upperBetti} that $ b_1(X)\ge r$. We have completed the proof.
 \end{proof}
 
 %\begin{remark}
 	%In presence of two-sided bound of Ricci curvature and volume noncollapsing assumptions, Honda in \cite{Hondaspec}*{Theorem 1.5} showed the upper semicontinuity of the first Betti number. Combined with the above proposition \ref{prop:stableBetti}, it can be deduced immediately that the first betti number is stable in this case, hence the uniform spectrum gap claimed in \cite{Hondaspec}*{Theorem 1.5} indeed holds. 
% \end{remark}

\begin{theorem}\label{thm:fibration}
    Let $(M,g)$ be a closed Riemannian $n$-manifold with $\mr{diam}_g(M)<D<\infty$, $\vol_g(M)>v>0$ and $\Ric_g\le K<\infty$ for some $K\ge 0$. If the first Betti number $b_1(M)=n-j$, $j=1,2,3$, then there exists an $\eps\defeq \eps(D,v,K)>0$ so that when $\Ric_g\ge -\eps$, $M$ is a topological fiber bundle over $\mb T^{b_1}$.
\end{theorem}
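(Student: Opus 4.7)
The plan is to argue by contradiction via a rigidity-in-the-limit scheme. Suppose no such $\eps$ exists; then for some $j \in \{1,2,3\}$ there exist closed Riemannian $n$-manifolds $(M_i, g_i)$ with $\mr{diam}_{g_i}(M_i) < D$, $\vol_{g_i}(M_i) > v$, $-\eps_i \le \Ric_{g_i} \le K$, $\eps_i \to 0^+$, and $b_1(M_i) = n-j$, none of which is a fiber bundle over $\mb T^{n-j}$. Gromov precompactness together with the non-collapsing bound yields, after extracting a subsequence, an mGH limit $(X, \dist, \haus^n)$ that is $\ncRCD(0, n)$ and of diameter at most $D$, and Proposition \ref{prop:stableBetti} transfers the Betti value to $b_1(X) = n-j$.

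Next I would extract a product splitting of an appropriate abelian cover. Let $N \triangleleft \pi_1(X)$ be the kernel of $\pi_1(X) \to H_1(X; \mb Z)/\mr{torsion} \cong \mb Z^{n-j}$ and set $\hat X := \tilde X / N$; then $\hat X \to X$ is a Galois cover with group $\Gamma \cong \mb Z^{n-j}$ acting cocompactly by isometries. Combining Theorem \ref{thm:Deck=fund} with the equivariant pGH convergence of universal covers from \cite{PanWang_universal} realizes $\hat X$ as the equivariant pGH limit of the corresponding $\mb Z^{n-j}$-covers $\hat M_i$ of $M_i$, which inherit the curvature bounds. A Cheeger--Gromoll type Busemann/minimal-displacement argument applied to the cocompact $\Gamma$-action produces $n-j$ mutually orthogonal lines in $\hat X$, so iterating the RCD splitting theorem \cite{Gigli_splitting} yields an isometric splitting $\hat X = \R^{n-j} \times Y$ with $Y$ a compact $\ncRCD(0, j)$ space containing no lines, $j \le 3$.

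Smoothness of the limit now follows from the two-sided Ricci hypothesis. The cover $\hat X$ is a noncollapsed Ricci limit of manifolds with $\Ric \in [-\eps_i, K]$, so the Cheeger--Naber codimension-four theorem \cite{CN15_codim4} gives that the singular set $\mc S(\hat X)$ has Hausdorff codimension at least $4$. The product structure forces $\mc S(\hat X) = \R^{n-j} \times \mc S(Y)$, so $\mc S(Y)$ has codimension at least $4$ in $Y$; since $\dim Y = j \le 3$, this forces $\mc S(Y) = \emptyset$. Hence $Y$, $\hat X$, and $X = \hat X / \Gamma$ are all smooth Riemannian manifolds. Under two-sided Ricci bounds, non-collapsing, and a smooth limit, Anderson's $C^{1,\alpha}$ regularity theorem upgrades the mGH convergence to $C^{1,\alpha}$ convergence and provides diffeomorphisms $M_i \cong X$ for all sufficiently large $i$.

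The final step, which I expect to be the main technical obstacle, is to recognize $X$ itself as a fiber bundle over $\mb T^{n-j}$ with fiber $Y$. Because $Y$ is compact and contains no lines, every isometry of $\R^{n-j} \times Y$ decomposes as a product isometry, giving a homomorphism $\rho: \Gamma \to \mr{Isom}(\R^{n-j})$. Its image is a discrete, abelian, cocompact subgroup of $\mr{Isom}(\R^{n-j})$; Bieberbach combined with abelianness forces the holonomy to commute with a full-rank translation lattice and so to be trivial, giving $\rho(\Gamma) = \mb Z^{n-j}$ acting purely by translations. The kernel of $\rho$ is trivial too, since any infinite-order isometry of a compact manifold cannot act freely and properly discontinuously. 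Writing each $\gamma \in \Gamma$ as $(v_\gamma, \phi_\gamma) \in \R^{n-j} \times \mr{Isom}(Y)$, the projection onto the first factor descends to a locally trivial fiber bundle $X \to \R^{n-j}/\rho(\Gamma) = \mb T^{n-j}$ with fiber $Y$, and transporting this structure along the diffeomorphism $M_i \cong X$ contradicts the standing assumption. The delicate point requiring careful justification is the RCD-level production of exactly $n-j$ mutually orthogonal lines in $\hat X$ --- sharp enough to match the full Betti value rather than yielding only a partial splitting --- which should combine the equivariant noncollapsed convergence of $\hat M_i \to \hat X$ with minimal-displacement arguments for the $\mb Z^{n-j}$-action.
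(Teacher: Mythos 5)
Your overall scheme is the same as the paper's: contradiction, a noncollapsed $\ncRCD(0,n)$ limit $X$ with $b_1(X)=n-j$ via Proposition \ref{prop:stableBetti}, a splitting of an abelian cover of $X$ with a low-dimensional compact factor, the codimension-four theorem \cite{CN15_codim4} to kill singular points in that factor, a purely-translational torus action giving the bundle structure, and closeness of $M_i$ to $X$ to transfer it. Working with the intrinsic covers of $X$ (legitimate by Theorem \ref{thm:Deck=fund}) and with the torsion-free abelianization cover is in fact a tidier bookkeeping than the paper's, which works with the equivariant limit of $(\tilde M_i,\pi_1(M_i))$ and must separately show the isotropy group is trivial and handle the torsion part of $H_1(X)$. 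However, there are two genuine gaps. First, the step you yourself flag --- that the cocompact $\mb Z^{n-j}$ deck action on $\hat X$ produces $n-j$ independent lines, hence $\hat X\cong\R^{n-j}\times Y$ with $Y$ \emph{compact} --- is exactly the nontrivial input, and it is not supplied: in the $\RCD$ setting a ``Busemann/minimal-displacement'' argument is not routine (displacement need not be realized along lines, lines coming from different generators need not give independent splittings, and compactness of the cross-section is a separate claim). The paper does not reprove this; it imports it from \cite{MondinoWeiUni}*{Theorem 1.3} (the universal cover splits at least $b_1$ many $\R$-factors) together with the argument of \cite{MMP_Torus}*{Section 4} for the splitting of the $H_1$-cover, plus the classification of the simply connected $2$- or $3$-dimensional factor. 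Without citing or proving such a structure result your chain breaks at its first real step.

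Second, your application of \cite{CN15_codim4} to $\hat X$ assumes that $\hat X$ is itself a noncollapsed Ricci limit of the covers $\hat M_i$, i.e. that the cover of the limit is the limit of the covers. That identification is precisely the delicate point in this subject (it is why the paper routes through \cite{PanWang_universal} and an isotropy argument), and you cannot simply assert it. The gap is repairable: apply the codimension-four theorem to $X$ itself, which \emph{is} a noncollapsed limit of the $M_i$ with two-sided Ricci bounds, and lift the conclusion $\mc S(X)=\mc S_{n-4}(X)$ to $\hat X$ through the covering map, since the statement is local --- this is exactly what the paper does --- but as written your justification does not stand. Two smaller points: your claim that the kernel of $\rho$ is trivial is better argued from proper discontinuity (an infinite subgroup acting trivially on the $\R^{n-j}$-factor would have orbits in compact slices $\{x\}\times Y$), not from properties of a single isometry; and the final upgrade to diffeomorphisms via Anderson's $C^{1,\alpha}$ theory needs a uniform harmonic-radius bound on the smooth compact limit --- available, but the paper's use of the intrinsic Reifenberg theorem \cite{Cheeger-Colding97I} already gives the homeomorphism that the statement (``topological fiber bundle'') requires.
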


\begin{proof}
    We argue by contradiction. Assume there exists $\eps_i\to 0^+$ and $(M_i,g_i)$ with $\mr{diam}_{g_i}(M)<D<\infty$, $\vol_{g_1}(M)>v>0$ and $-\eps_i\le\Ric_{g_i}\le K<\infty$, so that $b_1(M_i)=n-j$ but $M$ is not a fiber bundle over $\mb T^{n-j}$. Then we get a GH limit space $(X,\dist,\haus^n)$ of $(M_i,g_i)$ which is $\ncRCD(0,n)$. It follows from Proposition \ref{prop:stableBetti} that $b_1(X)=n-j$. By \cite{MondinoWeiUni}*{Theorem 1.3}, the number of $\R$ factors splits in universal cover of $X$ is at least the first Betti number of $X$ given the identification between the group of deck transform (also called the revised fundamental group in \cite{MondinoWeiUni}) and the fundamental group $\pi_1(X)$, Theorem \ref{thm:Deck=fund}. 
    
    Along with this convergence, there is also an associated equivarient Gromov-Hausdorf (eqGH in short) convergence of the Riemannian universal cover $(\tilde M_i,\tilde g_i,\pi_1(M_i))$ where $\pi_1(M_i)$ acts isometrically. More precisely, fix $p_i\in M_i$ and a lifting $\tilde p_i\in \tilde M_i$, there exists $(Y, \dist_Y, y, G)$, $G\le \mr{Isom}(Y)$, so that $(\tilde M_i,\tilde g_i, \tilde p_i,\pi_1(M_i))$ eqGH converges to $(Y, \dist_Y, y, G)$ with $X= Y/G$ isometrically as an orbit space. Meanwhile, let $H$ be the group generated by all isotropy subgroups of $G$, then $Y/H$ is the universal cover of $X$, by \cite{PanWang_universal}*{Theorem 1.4, Corollary 6.3}. The identity component subgroup $G_0$ in there is trivial for noncollapsed spaces since $G$ ,thus $H$, is a discrete group. To proceed, we consider $3$ cases.
    \begin{itemize}
        \item $j=1$. The universal cover of $X$ is the Euclidean $\R^n$,  then it follows that $Y=\R^n$ and $G=\pi_1(X)$, which in turn implies $X=\R^n/ \pi_1(X)$, where $\pi_1(X)\le \mr{Isom}(\R^n)$ acts freely and proper discontinuously, so $X$ is a flat manifold, hence smooth. we can then appeal to \cite{BingWangBetti}*{Theorem 2.1} to conclude that $X$ is a fiber bundle over $\mb T^{n-1}$. The intrinsic Riefenberg theorem \cite{Cheeger-Colding97I}*{Appendix A} yields that for large $i$, $M_i$ and $X$ are homeomorphic, providing also a fiber bundle structure for $M_i$, a contradiction.  
        \item $j=2$. The universal cover of $X$ splits off an $\R^{n-2}$-factor and it is simply connected, so it is either $\R^n$ or $\R^{n-2}\times \mb S^2$. In the former case the proof is exactly the same as in the previous item. In the latter case, for some metric $\dist_{\mb S^2}$ the triple $(\mb S^2,\dist_{\mb S^2},\haus^2)$ carries a $\ncRCD(0,2)$ structure. Notice that by \cite{CN15_codim4}, $\mc{S}(X)=\mc{S}_{n-4}(X)$, and this property is purely local hence is lifted to the universal cover, which means $\mc R(\mb S^2)=\mb S^2$. Similarly, we claim that $Y=\mb S^2\times \R^{n-2}$ or $H$ is trivial. Indeed, notice that taking quotient by nontrivial isotropy group strictly decreases the diameter of the space of direction hence decrease the volume density. If there is a nontrivial isotropy subgroup at some $y\in Y$, then in the quotient $Y/H$ the projection of $y$ must be singular by \eqref{eq:density}, but there is no singular point in $\mb S^2\times \R^{n-2}$, a contradiction. It then follows that $X=\mb S^2\times \R^{n-2}/\pi_1(X)$ also has no singular point since the action is free and properly discontinuous, so $X$ is a $C^{1,\alpha}$ manifold. We will show that it is a fiber bundle over $\mb T^{n-2}$. Consider the covering space corresponding to $H_1(X)$, which is $\bar X\defeq (\R^{n-2}\times \mb S^2)/[\pi_1(X),\pi_1(X)]$, it follows from the induction step of the proof of \cite{MMP_Torus}*{Section 4} (especially (24) therein) that $\bar X$ splits off $\R^{n-2}$, then $\bar X=\R^{n-2}\times N^2$ for some compact $\ncRCD(0,2)$ space $N^2$. The Abelian group $H_1(X)=\mb Z^{n-2}\oplus T$ acts isometrically on $\bar X$. Here, $T$ denotes the torsion part of $H$ which is a finite sum of finite Abelian groups. Note that $\mr{Isom}(\R^{n-2}\times N^2)=\mr{Isom}(\R^{n-2})\times \mr{Isom} (N^2)$, let $P_1$ be the projection $\mr{Isom}(\R^{n-2})\times \mr{Isom} (N^2)\to\mr{Isom}(\R^{n-2})$, and $P_2$ be the projection $\mr{Isom}(\R^{n-2})\times \mr{Isom} (N^2)\to\mr{Isom}(N^2)$. We see that $P_1(H_1(X))$ acts by translation, i.e., $P_1(T)$ acts trivially on $\R^{n-2}$. Indeed, first the translation generated $\mb Z^{n-2}$ spans $\R^{n-2}$, and the action of $P_1(T)$ commutes with all translation as they are Abelian subgroups of $\mr{Isom}(\R^{n-2})$. Then write an arbitrary $P_1(T)$ action as a $(n-2)\times (n-2)$ matrix $A$, and let the linearly independent translations be $v_1,\ldots, v_{n-2}$. The commutativity reads $A(x+v_i)=Ax+v_i$, i.e.\ , $A v_i=v_i$, for $i=1,\ldots ,n-2$. So $A$ is similar to the identity and has finite order, which shows $A$ is the identity. It then follows that $X=\bar X/H_1(X)=(\mb T^{n-2}\times N^2/\mb P_2(Z^{n-2}))/T$. It is a fiber bundle over $\mb T^{n-2}$. %Observe that $\mr{Isom}(\mb S^2\times \R^{n-2})=\mr{Isom}(\mb S^2)\times \mr{Isom}(\R^{n-2})$, and consider the subgroup $\Gamma=\{(\mr{id},g):g\in \mr{Isom}(\R^{n-2})\}$. $\Gamma$ acts freely on $\R^{n-2}$ and cocompactly since $\mb S^2$ is compact. By Bieberbch theorem there is a $\Gamma_0\defeq\mb Z^{n-2} \trianglelefteq \Gamma$ with finite index, then $X=(\mb S^2\times \mb T^{n-2})/(\pi_1(X)/\Gamma_0)$, which shows it is a fiber bundle over $\mb T^{n-2}$, this is done by Yau in \cite{Yauflat}*{Theorem 3}. 
        Again by the intrinsic Reifenberg theorem, $X$ is homeomorphic to $M_i$ for large $i$, a contradiction. 
        
        \item $j=3$.  The only new case is when the universal cover of $X$ is $\R^{n-3}\times N^3$, where $N^3$ is compact simply connected $C^{1,\alpha}$ manifold for some $\alpha\in (0,1)$. It also carries a $\ncRCD(0,3)$ structure. Again we only need to show that the isotropy is trivial. Suppose there is a point with nontrivial isotropy subgroup, we can blow up at the point and see that the tangent space is $\R^3$ quotient a nontrivial isometric action, then it is not isometric to $\R^3$, which contradicts $\mc{S}(X)=\mc{S}_{n-4}(X)$. We consider again the covering space corresponding to $H_1(X)$, denoted by $\bar X$. It splits off $\R^{n-3}$ and the polynomial growth order of $H_1(X)$ is exactly $n-3$, so $X$ splits as $\R^{n-3}\times Y^3$ for some compact $\RCD(0,3)$ space $Y$ which is also a $C^{1,\alpha}$ manifold. The same proof applies to show that $H_1(X)$ acts on the $\R^{n-3}$ factor by translation. The fiber bundle structure comes from the fact that the quotient map is a locally trivial fibration and the base is $\mb T^{n-3}$ since $H_1(X)$ acts on the $\R^{n-3}$ factor by translation.
        \end{itemize}
\end{proof}

\begin{remark}
	It is worth pointing out that in the previous theorem, for the case $b_1(M)=n-1$, a Ricci curvature upper bound is not needed, as flatness automatically implies smoothness. Furthermore, if $b_1(M)=n-1$ then from the proof we see that a finite cover of $M$ is $\mb T^n$, hence $M$ is diffeomorphic to a infranilmanifold. This is a different version of \cite{YamaguchiAlmostRic}*{Theorem 1}, where we replaced the upper sectional curvature bound by the lower volume bound.
\end{remark}

\section{Line splitting at infinity %for sectional curvature lower bounds
}\label{sec:line}
In this section, we discuss some elementary yet less explored aspects of nonnegative sectional curvature. The goal is to prove Theorem \ref{thm:splitting}.

 The new ingredient for nonnegative sectional curvature is a line splitting theorem that links the asymptotic cone at a point and the limit space at infinity. This could be interesting in its own right.

\begin{theorem}\label{thm:splitting}
     Let $(M,g)$ be an open $n$-manifold of nonnegative sectional curvature. If the asymptotic cone at $x\in A$ has dimension $m\in \N\cap [1,n]$, then there exists a sequence of points $p_i\to \infty$, so that the pointed Gromov-Hausdorff (pGH in short) limit of $(M,g,p_i)$ splits $m$-lines.
\end{theorem}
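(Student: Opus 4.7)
My plan is to locate an $m$-regular interior point of the asymptotic cone $C$ at $x$ and realize its unrescaled blow-up as a limit space at infinity of $M$ via a diagonal argument; the limit turns out to be $\R^m$, which trivially splits $m$ lines. The enabling tool from the excerpt is the monotonicity of comparison angles: it makes $C$ a well-defined Euclidean cone over the ideal boundary at $x$, independent of the rescaling sequence, so that the convergence $(M, r^{-1}\dist_g, x) \to (C, \dist_C, x_\infty)$ holds along every $r \to \infty$.

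First I would produce the reference point $p^\ast \in C$. Since $C$ is an Alexandrov space of curvature $\ge 0$ of Hausdorff dimension $m$, the Burago--Gromov--Perelman structure theorem guarantees that the set of $m$-regular points (points whose tangent cone is isometric to $\R^m$) is dense. Fix any such $p^\ast$ at positive distance $t^\ast$ from the cone vertex $x_\infty$. From the pGH convergence $(M, r^{-1}\dist_g, x) \to (C, \dist_C, x_\infty)$ with error $\epsilon_r \to 0$, for each large $r$ select $p_r \in M$ whose image under an $\epsilon_r$-approximation lies within $\epsilon_r$ of $p^\ast$; then $\dist_g(x, p_r) \approx r\, t^\ast \to \infty$, so $p_r \to \infty$.

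The main step is a double rescaling. The outer convergence $(M, r^{-1}\dist_g, x) \to C$ uses $r \to \infty$, while the inner tangent-cone convergence $(C, \delta^{-1}\dist_C, p^\ast) \to (\R^m, 0)$ uses $\delta \to 0$. Composing them, for any fixed $R, \epsilon > 0$ there exist $r_0, \delta_0$ such that whenever $r \ge r_0$ and $\delta \le \delta_0$, the ball of radius $R$ around $p_r$ in $(M, (r\delta)^{-1}\dist_g)$ is $\epsilon$-pGH-close to the ball $B_R(0)\subset\R^m$. Setting $\delta = 1/r$ pins $r\delta = 1$, so the target metric is the original $\dist_g$, unrescaled. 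A standard diagonal extraction with $R_i \to \infty$, $\epsilon_i \to 0$, and $r_i$ chosen to grow sufficiently fast produces $p_i \defeq p_{r_i} \to \infty$ with $(M, \dist_g, p_i) \to (\R^m, 0)$ in pGH; since $\R^m$ contains $m$ coordinate lines through the origin, the limit splits $m$ lines.

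The main technical obstacle is the compatibility of the two rescalings under the coupling $r\delta = 1$: one must ensure that both the outer error (how close $(M, r^{-1}\dist_g)$ is to $C$ on the small ball of size $R\delta = R/r$ around $p^\ast$) and the inner error (how close $(C, \delta^{-1}\dist_C, p^\ast)$ is to $\R^m$ on the ball of size $R$) remain small. Because the two errors depend on independent parameters and both vanish in their respective limits, a diagonal selection succeeds. Nonnegative sectional curvature enters only through angle monotonicity, which simultaneously supplies the existence and Euclidean-cone structure of $C$ and ensures that the limit being extracted is genuinely an Alexandrov blow-up at $p^\ast$.
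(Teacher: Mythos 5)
There is a genuine gap at the ``double rescaling'' step, and the statement you end up proving --- that the \emph{unrescaled} limit $(M,g,p_i)$ converges to $\R^m$ --- is actually false. The problem is the coupling $\delta=1/r$. The outer convergence $(M,r^{-1}\dist_g,x)\to (C,\dist_C)$ comes with a pGH error $\epsilon_r$ measured at unit scale of the rescaled metric; when you further blow up by $\delta^{-1}$ to see the tangent cone at $p^\ast$, that error is magnified to $\epsilon_r/\delta$. So the composed approximation is only good when $\delta\gg\epsilon_r$, and there is no control forcing $r\epsilon_r\to 0$; you cannot simply set $\delta=1/r$. A correct diagonal argument therefore only yields scales $t_i\to\infty$ (with $t_i=r_i\delta_i\to\infty$, since $\delta_i$ must shrink slower than $\epsilon_{r_i}$) such that $(M,t_i^{-2}g,p_i)\to(\R^m,0)$; it says nothing about the unit scale. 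That the unit-scale claim is false is seen already on the flat cylinder $M=\R\times\mb S^1$: its asymptotic cone is $\R$ (so $m=1$) and your $p_r$ sit near a regular point of that cone, yet every pGH limit of $(M,g,p_r)$ is the cylinder itself, not $\R$. (The paper's own elliptic paraboloid example shows the opposite failure too: the limit at infinity can be $\R^n$ while $m=1$.) So the limit at infinity is in general neither Euclidean nor $m$-dimensional; the theorem only asserts it \emph{splits} $m$ lines.

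This is exactly where the paper needs its extra ingredient, which your proposal omits. After obtaining $(M,t_i^{-2}g,p_i)\to(\R^m,0)$ (the tangent cone at a regular point of $C$), the paper applies Lemma \ref{lem:splittingatinfinity}: if $p_i$ converges under a blow-down to a point lying in the interior of a geodesic, then monotonicity of comparison angles forces the two limit rays in the \emph{unrescaled} limit $(M,g,p_i)$ to meet at angle $\pi$, producing a line. Applying this to each of the $m$ coordinate segments through $0\in\R^m$, and using angle monotonicity once more to check the resulting $m$ lines are mutually perpendicular (angles at infinity $\pi/2$ pinch the angles to exactly $\pi/2$), one gets $m$ independent $\R$-factors via the splitting theorem. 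Your use of angle monotonicity only to identify $C$ as a Euclidean cone does not substitute for this transfer-of-geodesics step, which is the heart of the proof.
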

The inverse of the above theorem is not true, as illustrated by the following example.
\begin{example}
    The number of lines split from the limit space at infinity can be arbitrarily lager than the dimension of the asymptotic cone, as can be seen from the elliptic paraboloid $\{(x_1,x_2,\ldots,x_{n+1})\in\R^{n+1}: x_{n+1}=\sum_{i=1}^n x_i^2\}$, where $n\ge 2$. It has the half line as its asymptotic cone at $0$ but for some sequence of points diverging to infinity, the limit space is $\R^n$. 
\end{example}

We first generalize a well-known fact about splitting for a manifold of nonnegative sectional curvature. It is originally stated as follows.

\begin{proposition}\label{prop:asympsplitting}
 Let $(M,g)$ be an open $n$-manifold of nonnegative sectional curvature. If the asymptotic cone of $M$ splits a line then $M$ also splits a line. 
\end{proposition}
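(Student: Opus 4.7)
The plan is to translate the splitting of the asymptotic cone into the existence of two opposite rays in $M$ with angle $\pi$ at infinity, then to use the monotonicity of comparison angles to promote these rays to a geodesic line in $M$, and finally to invoke the Cheeger--Gromoll splitting theorem.

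Fix any base point $x\in M$. By the discussion of asymptotic cones of Alexandrov spaces in Section \ref{subsec:AsymCone}, the asymptotic cone $C$ at $x$ is unique and is the metric cone $C(\partial_\infty M)$ over the ideal boundary $\partial_\infty M$ at $x$, with apex $o$ corresponding to $x$; by definition, distances in $\partial_\infty M$ are given by $\angle_\infty$. If $C$ splits isometrically as $\R\times C'$, then writing the apex as $o=(s_0,z_0)$, the set $\R\times\{z_0\}$ is a line in $C$ passing through $o$, so decomposes into two opposite rays from $o$ making apex-angle $\pi$. Rays from the apex of $C(\partial_\infty M)$ are precisely the curves $r\mapsto(r,\xi)$ for $\xi\in\partial_\infty M$, and the law of cosines for metric cones identifies the apex-angle between $(r,\xi_+)$ and $(r,\xi_-)$ with $d_{\partial_\infty}(\xi_+,\xi_-)$ (capped at $\pi$). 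Hence the two opposite rays correspond to two points $\xi_+,\xi_-\in\partial_\infty M$ with $\angle_\infty$-distance equal to $\pi$, which by the very definition of $\partial_\infty M$ arise from two rays $\sigma_+,\sigma_-\colon[0,\infty)\to M$ emanating from $x$ with $\angle_\infty(\sigma_+,\sigma_-)=\pi$.

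By the monotonicity of comparison angles in nonnegative sectional curvature (Section 2.2), the function
\[
\phi(t)\defeq \angle^0\, x_{\sigma_+(t)}^{\sigma_-(t)}\le \pi
\]
is nonincreasing in $t$ and satisfies $\lim_{t\to\infty}\phi(t)=\pi$. Hence $\phi\equiv\pi$ on $(0,\infty)$, which unravels to $\dist_g(\sigma_+(t),\sigma_-(t))=2t$ for all $t\ge 0$. Consequently, the concatenation of the reversal of $\sigma_-$ with $\sigma_+$ is a unit-speed geodesic line $\gamma\colon\R\to M$ through $x$. Since $(M,g)$ has nonnegative sectional curvature (in particular nonnegative Ricci curvature) and contains a line, the Cheeger--Gromoll splitting theorem yields an isometric splitting $M=\R\times M'$.

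The main conceptual step is the second one: extracting from an arbitrary splitting of the asymptotic cone the existence of a line through the apex and then tying the cone-splitting directions to concrete rays in $M$ with $\angle_\infty=\pi$. I expect this to be the only mildly technical point; once it is in place, the passage to a line in $M$ is a one-line use of angle monotonicity, and the final conclusion is immediate from Cheeger--Gromoll.
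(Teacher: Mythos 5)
Your argument is correct, and it is the classical ``static'' proof of this fact: you use the identification of the asymptotic cone with the metric cone over the ideal boundary at $x$ (stated in Section \ref{subsec:AsymCone}), observe that a splitting forces the apex to lie on a line whose two radial rays correspond to points of $\partial_\infty M$ at $\angle_\infty$-distance $\pi$, i.e.\ to two rays $\sigma_\pm$ from $x$ in $M$ with $\angle_\infty(\sigma_+,\sigma_-)=\pi$, and then the monotonicity of comparison angles pins $\angle^0 x_{\sigma_+(t)}^{\sigma_-(t)}\equiv\pi$, so the concatenation is a line and Cheeger--Gromoll applies. The paper itself does not spell out a proof of Proposition \ref{prop:asympsplitting}; it treats it as known and instead proves the generalization, Lemma \ref{lem:splittingatinfinity}, by a dynamic argument: pull back endpoints of a geodesic of the cone through a point $p\neq o$ via Gromov--Hausdorff approximations in the blow-down, use angle monotonicity along the approximating geodesics $\sigma_i^\pm$, and produce a line in the \emph{limit space} of $(M,g,p_i)$. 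The two routes share the same engine (comparison-angle monotonicity plus the splitting theorem), but they differ in mechanism and in what they deliver: your route is cleaner and gives a line in $M$ itself through the fixed basepoint, exactly what the Proposition asserts, while it relies on the cone-over-ideal-boundary structure at the tip and on every cross-section point being realized by a genuine ray from $x$; it therefore does not extend to the situation of Lemma \ref{lem:splittingatinfinity}, where the relevant point of the cone is not the tip and one only obtains lines in limit spaces at infinity --- which is precisely why the paper adopts the approximation argument. Within the paper's stated preliminaries (uniqueness of the asymptotic cone and its cone structure over the ideal boundary for nonnegative sectional curvature), your proof is complete.
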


Let us view it from a slightly different perspective. Since the asymptotic cone is a metric cone, the fact that it splits a line is equivalent to the fact that the cone tip is in the interior of a geodesic. With this point of view, we find that the cone tip can be replaced by any other points in the asymptotic cone and the splitting then holds at infinity. 

\begin{lemma}\label{lem:splittingatinfinity}
Let $(M,g)$ be an open $n$-manifold of nonnegative sectional curvature, and $(C,\dist_C,o)$ be the asymptotic cone of $M$ at some point with cone tip $o$ and $p\in C$ be any point other than $o$. For any sequence $(M,r_i^{-2}g, p_i)$ that pGH converges to $(C,\dist_C, p)$, where $\{r_i\}_{i\in \N}$ is a sequence of positive numbers so that $r_i\to \infty$, every possible pGH limit space of a subsequence of $(M,g,p_i)$ splits a line. 
\end{lemma}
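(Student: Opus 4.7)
The plan is to transfer the radial geodesic structure at $p$ inside $C$ back to long almost-straight bi-geodesics through $p_i$ in the unrescaled manifold $(M,g)$, via the scale-invariance of comparison angles together with the monotonicity in nonnegative curvature recalled in Section~\ref{sec:prelim}, and then apply the splitting theorem.

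First, inside $C$ the radial curve through $p$ provides the key geometric datum. Since $C$ is a metric cone with tip $o$, the ray from $o$ through $p$ extends to a minimizing geodesic ray $\alpha:[0,\infty)\to C$ with $\alpha(0)=o$ and $\alpha(|op|)=p$. Writing $\sigma\defeq\alpha|_{[|op|,\infty)}$ for the outgoing half starting at $p$, we have $\dist_C(o,\sigma(T))=|op|+T$ for every $T>0$, so the comparison angle at $p$ with endpoints $o$ and $\sigma(T)$ is exactly $\pi$.

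Next I would transfer this to $(M,r_i^{-2}g,p_i)$: fix $T>0$ and by the pGH convergence choose $a_i,b_i\in M$ with
\[
\dist_{r_i^{-2}g}(p_i,a_i)\to|op|,\quad \dist_{r_i^{-2}g}(p_i,b_i)\to T,\quad \dist_{r_i^{-2}g}(a_i,b_i)\to |op|+T,
\]
so that the comparison angle at $p_i$ with endpoints $a_i,b_i$ tends to $\pi$ in $(M,r_i^{-2}g)$. Comparison angles depend only on ratios of pairwise distances, hence they are scale-invariant, and the same limit therefore holds when the angles are computed in $(M,g)$, where now $\dist_g(p_i,a_i)\sim r_i|op|\to\infty$ and $\dist_g(p_i,b_i)\sim r_iT\to\infty$. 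Taking minimizing unit-speed segments $\gamma_i^g,\sigma_i^g$ in $(M,g)$ from $p_i$ to $a_i,b_i$ respectively, the monotonicity of comparison angles in nonnegative sectional curvature yields, for any fixed $R>0$ and all $i$ large enough that $r_i|op|>R$ and $r_iT>R$,
\[
\angle^0(p_i;\gamma_i^g(R),\sigma_i^g(R)) \;\ge\; \angle^0(p_i;a_i,b_i)\;\longrightarrow\;\pi,
\]
where $\angle^0(v;x,y)$ denotes the comparison angle at $v$ with endpoints $x,y$; hence at radius $R$ the comparison angle at $p_i$ is at least $\pi-\eps_i$ with $\eps_i\to 0$.

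Finally, given any subsequential pGH limit $(Y,\dist_Y,y)$ of $(M,g,p_i)$, an Arzel\`a--Ascoli diagonal extraction produces unit-speed rays $\gamma_Y,\sigma_Y:[0,\infty)\to Y$ emanating from $y$ with $\dist_Y(\gamma_Y(R),\sigma_Y(R))=2R$ for every $R>0$; their concatenation is a line through $y$, and the Toponogov (equivalently Cheeger--Gromoll) splitting theorem applied to the Alexandrov limit $Y$ gives $Y\cong\R\times Y'$ isometrically. The main obstacle is the bookkeeping between the two scales: the angle control is most natural at the distance pair $(r_i|op|,r_iT)$ prescribed by the cone limit, while the limit of $(M,g,p_i)$ demands control at every fixed radius $R$; this transfer is exactly what the monotonicity supplies once $r_i|op|$ and $r_iT$ both exceed $R$, and it is the only place where nonnegative sectional curvature (as opposed to only nonnegative Ricci) is essential, which is why the Ricci analogue of this lemma fails in the form stated.
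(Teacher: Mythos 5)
Your proof is correct and follows essentially the same route as the paper's: you exploit that $p$ is interior to a segment of definite length in the cone (here from $o$ to a point beyond $p$ along the radial ray, the paper uses $\sigma(1),\sigma(3)$ around $p=\sigma(2)$), use scale-invariance plus monotonicity of comparison angles to push the near-$\pi$ angle at $p_i$ from scale $r_i$ down to any fixed radius $R$, extract limit rays whose concatenation is a line, and invoke the splitting theorem; the paper merely organizes the last step by passing to the limit rays first and verifying $\angle_\infty=\pi$ via pulled-back points. The only blemish is notational: with $\sigma\defeq\alpha|_{[|op|,\infty)}$ you should write $\sigma(T)=\alpha(|op|+T)$ (i.e.\ reparametrize so $\sigma(0)=p$) for the identity $\dist_C(o,\sigma(T))=|op|+T$ to read as intended.
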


The proof is essentially the same as that of Proposition \ref{prop:asympsplitting}, which can be found for examples in recent \cite{andoni2022ancient}*{Lemma 2.3} or \cite{Isoregion}*{Theorem 4.6}. The proof here is in the spirit of \cite{andoni2022ancient}*{Lemma 2.3}

\begin{proof}
Since $(C,\dist_C,o)$ is a metric cone and $p$ is not $o$, there is a ray $\sigma:[0,\infty)\to C$ emanating from $o$ passing through $p$. We can assume that $p=\sigma(2)$. There exists a sequence of points $q^-_i$ converging to $\sigma(1)\defeq q^-$ and a sequence $p^+_i$ converging to $\sigma(3)\defeq q^+$. Clearly, we have 
\begin{equation}\label{eq:asympdistance}
    r_i^{-1}\dist_g(p_i, q^-_i)\to 1,\quad r_i^{-1}\dist_g(p_i, q^+_i)\to 1, \quad r_i^{-1}\dist_g(q^-_i, q^+_i)\to 2,\quad \text{as $i\to \infty$}.
\end{equation}
Let $\sigma^-_i$ (resp. $,\sigma^+_i$) be a geodesic joining $p_i$ and $q^-_i$(resp. $q^+_i$), which has length $O(r_i)$. Observe that by the monotonicity of comparison angles, it follows from a direct computation based on \eqref{eq:asympdistance} that
\begin{equation}\label{eq:angleineq}
    \angle(\sigma^-_i, \sigma^+_i)\ge \arccos{\frac{\dist_g^2(p_i,q^-_i)+\dist_g^2(p_i,q^+_i)-\dist_g^2(q^-_i,q^+_i)}{2\dist_g(p_i,q^-_i)\dist_g(p_i,q^+_i)}}\to \pi \quad \text{as $i\to\infty$}.
\end{equation}
By Gromov's precompactness theorem $(M,g,p_i)$ pGH (sub)converges to an Alexandrov space $(A,\dist_A, p_\infty)$. It can be seen from the non-branching property that $\sigma^-_i$ (resp. $\sigma^+_i$) converges pointwise to a ray emanating from $p_\infty$ denoted by $\sigma^-$ (resp. $\sigma^+$). We claim that the concatenation of $\sigma^-$ and $\sigma^+$ is a (minimizing) geodesic. To show this, it suffices to show that  $\angle_{\infty}(\sigma^-,\sigma^+)=\pi$. 

To this end, first fix a $j\in \N$. We choose points $\tilde q^-_j$ on $\sigma^-$ so that $\dist_A(p_\infty, \tilde q^-_j)=\dist_g(p_j,q^-_j)$, they are uniquely determined by their distances to $p_\infty$. For each sufficiently large $i>j$ depending on $j$, $\tilde q^-_j$ can be pulled back via the GH approximation to a point on $\sigma^-_i$ since we have assumed that $\sigma^-_i$ converges pointwise to $\sigma^-$. We denote this point by $q^-_{j,i}$. The same procedure can be done when replacing $-$ with $+$, we denote the resulting point by $q^+_{j,i}$. Note that $\dist_g(p_i,q^-_i)\ge \dist_g (p_i, q^-_{j,i})$, along with \eqref{eq:angleineq}, the monotonicity of comparison angles applied to angles formed by $q^-_{j,i},p_i,q^+_{j,i}$ and $q^-_{i},p_i,q^+_{i}$ along $\sigma^-_i$ and $\sigma^+_i$ yields that 
\begin{align*}
    \frac{\dist_g^2(p_i,q^-_{j,i})+\dist_g^2(p_i,q^+_{j,i})-\dist_g^2(q^-_{j,i},q^+_{j,i})}{2\dist_g(p_i,q^-_{j,i})\dist_g(p_i,q^+_{j,i}) }&\le \frac{\dist_g^2(p_i,q^-_i)+\dist_g^2(p_i,q^+_i)-\dist_g^2(q^-_i,q^+_i)}{2\dist_g(p_i,q^-_i)\dist_g(p_i,q^+_i)} \notag\\ 
    &= \cos(\pi-\eps_i)\le-1+\frac12 \eps_i^2,
    %\quad \text{as $i\to\infty$ }.
\end{align*}
for some $\eps_i \to 0^+$ as $i\to\infty$. This in turn implies that
\begin{align*}
    \dist_g^2(q^-_{j,i},q^+_{j,i})\ge (\dist_g(q^-_{j,i},p_i)+\dist_g(p_i,q^+_{j,i}))^2- \eps_i^2 \dist_g(q^-_{j,i},p_i)\dist_g(q^+_{j,i},p_i).
\end{align*}

Note that both $\dist_g(q^-_{j,i},p_i),\dist_g(q^+_{j,i},p_i)$ are bounded independent of $i$. Letting $i\to \infty$, together with triangle inequality, it holds that $\dist_A(\tilde q^-_{j},\tilde q^+_{j})= \dist_A(\tilde q^-_{j},p_\infty)+\dist_A(p_\infty,\tilde q^+_{j})$. Since this holds for every $j$, we see that $$
\angle_{\infty}(\sigma^-,\sigma^+)=\lim_{j\to\infty}{\arccos{\frac{\dist_A^2(p_\infty,\tilde q^-_j)+\dist_A^2(p_\infty,\tilde q^+_j)-\dist_A^2(\tilde q^-_j,\tilde q^+_j)}{2\dist_A(p_\infty, \tilde q^-_j)\dist_A(p_\infty,\tilde q^+_j)}}}=\pi,
$$ 
 as desired. Now that there is a line in $A$, the splitting  theorem then asserts that $A$ splits a line.
\end{proof} 

 As an application we give an alternative proof of the fact that any pGH converging sequence $(M,g,p_i)$ with $p_i$ diverging to infinity splits a line in its limit space, obtained in \cite{antonelli2023isoperimetric}*{Lemma 2.29}. Our method also reveals the relation between this splitting and the fact that any asymptotic cone of $M$ is a metric cone hence for every point other than the tip there is a ray passing through it.

\begin{corollary}\label{cor:splittingatinfinity}
Let $(M,g)$ be an open $n$-manifold of nonnegative sectional curvature and $p_i\to\infty$. If $(M,g,p_i)$ pGH converges to an Alexandrov space $(A,\dist_A, x)$, then $A$ splits a line.
\end{corollary}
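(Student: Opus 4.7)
The strategy is to reduce Corollary \ref{cor:splittingatinfinity} to a direct application of Lemma \ref{lem:splittingatinfinity}. The key observation is that, although the hypothesis only provides the sequence $\{p_i\}$ diverging to infinity in the original metric $g$, we may manufacture an auxiliary rescaling $r_i \to \infty$ so that in the scale $r_i^{-2}g$ the sequence $\{p_i\}$ has \emph{bounded distance} from a fixed basepoint, and thus accumulates at a distinguished non-tip point of an asymptotic cone. Once this setup is in place, the conclusion of Lemma \ref{lem:splittingatinfinity} applies verbatim.

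Concretely, I would proceed as follows. Fix any $q \in M$ and put $r_i \defeq \dist_g(q,p_i)$; by the hypothesis $p_i \to \infty$ we have $r_i \to \infty$. By Gromov's precompactness theorem, after passing to a subsequence, $(M, r_i^{-2}g, q)$ pGH converges to an asymptotic cone $(C,\dist_C,o)$ of $M$ at $q$, where $o$ is the cone tip. Because $\dist_{r_i^{-2}g}(q,p_i) = 1$ for every $i$, after a further subsequence extraction the points $p_i$ converge, under the associated GH approximations, to a point $p \in C$ with $\dist_C(o,p) = 1$. In particular $p \neq o$, and the very same sequence, rebased at $p_i$, satisfies
\begin{equation*}
    (M, r_i^{-2}g, p_i) \xrightarrow{\,\mathrm{pGH}\,} (C,\dist_C, p).
\end{equation*}

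With this data, Lemma \ref{lem:splittingatinfinity} applies: since $p$ is a point of the asymptotic cone $(C,\dist_C,o)$ distinct from the tip, every pGH subsequential limit of $(M,g,p_i)$ splits a line. By hypothesis $(M,g,p_i)$ pGH converges to $(A,\dist_A,x)$, so $(A,\dist_A,x)$ itself must split a line, as desired.

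There is essentially no hard step once the lemma is in hand; the only point that needs a little care is the compatibility of the two subsequence extractions: first the one producing the asymptotic cone at $q$, and then the one ensuring $p_i$ accumulates at a single point $p$ of $C$. Both are justified by precompactness of bounded sequences in the pGH topology (applied in the rescaled metrics, where $\dist_{r_i^{-2}g}(q,p_i) = 1$ uniformly). Note that the choice of $r_i = \dist_g(q,p_i)$ is what guarantees $p$ sits at unit distance from $o$, hence is not the tip; any other choice of scales tending to infinity would work as long as this non-tip property is preserved.
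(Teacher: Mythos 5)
Your proposal is correct and follows essentially the same route as the paper: fix a basepoint $q$, rescale by $r_i$ comparable to $\dist_g(q,p_i)$ so that $p_i$ converges to a non-tip point of the asymptotic cone at $q$, and then invoke Lemma \ref{lem:splittingatinfinity} (the paper re-runs the lemma's geodesic argument mainly to note in addition that the limit $x$ of $p_i$ lies on the resulting line, which is not needed for the stated conclusion). The subsequence bookkeeping you flag is handled correctly, and since asymptotic cones are unique metric cones under nonnegative sectional curvature, no issue arises from the extractions.
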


\begin{proof}
    Take $y\in M$, and let $\dist_g(y, p_i)=2r_i$. By assumption $r_i\to \infty$. $(M,r_i^{-2}g,y)$ pGH converges to a metric cone $(C,\dist_C, o)$ and $p_i$ converges to a point $p_\infty$ with $\dist_C(o,p_\infty)=2$. Let $\sigma$ be the ray emanating from $o$ passing through $p_\infty$. We can find a geodesic $\sigma_i$ contained in the ball $(B_{3/2}(p_i), r_i^{-2}g)$ converging with the rescaling to $\sigma|_{(1/2,7/2)}$, then the proof follows exactly the same as Lemma \ref{lem:splittingatinfinity}. We conclude that $\sigma_i$ converges without rescaling to a line and $x$, the pGH limit of $p_i$, is on this line.
\end{proof}

\begin{remark}
In the proof of corollary \ref{cor:splittingatinfinity}, the geodesic joining the cone tip $o$ and $p_\infty$ is unique thanks to the structure of metric cones, so we can easily find a sequence of geodesics converging to it. However, the uniqueness of geodesic or the metric cone structure is not crucial. In fact, the non-branching property of Alexandrov spaces implies that any geodesic in a smoothable Alexandrov space is a limit geodesic, i.e. it can be realized as a pointwise limit of a sequence of geodesics in the approximating sequence of manifolds.
\end{remark}

We are in position to prove Theorem \ref{thm:splitting}.

\begin{proof}[Proof of Theorem \ref{thm:splitting}] 
Let $(C,\dist_C, o)$ be the asymptotic cone at $x$. It is an Alexandrov space of Hausdorff dimension $m\ge 1$, in particular, it has a regular point $p$ other than the cone tip $o$. To simplify the notation we assume that $\dist_C(o,p)=2$. Then there exists a sequence of real numbers $r_i\to \infty$ so that $(B_s(p_i),r_i^{-2}g, p_i)$ pGH converges to $(B_s(p),\dist_C, p)$ for any $s\in[0,1]$. By a diagonal argument, there exists a sequence of real numbers $t_i\to \infty$ as $i\to\infty$ so that $(B_1(p_i),t_i^{-2}g,p_i)$ pGH converges to $(B_1(0), |\cdot|, 0)\subset \R^n$ which is the tangent cone at $p$. Now that $0$ is in the interior of $m$ perpendicular geodesic segments, by repeatedly applying Lemma \ref{lem:splittingatinfinity}, we get $m$ lines $\sigma_j$, $j=1,2,\ldots,m$, in the pGH limit space $(A,\dist_A, x)$ of the sequence $(M,g,p_i)$, where $\sigma_j:\R\to A$ corresponds to the line segment on $i$-th coordinate passing through $0$ in the rescaled limit $B_1(0)$. Let $\sigma_j^- \text{ (resp.$\sigma_j^+$)} :[0,\infty]\to A$ be such that $\sigma_j^-(t)=\sigma(-t)$ (resp. $\sigma_j^+(t)=\sigma(t)$). It follows from the monotonicity that 

$$
\angle(\sigma^-_j, \sigma^+_k)\ge \angle_\infty(\sigma^-_j, \sigma^+_k)=\pi/2,
$$
 and also 
$$
 \pi-\angle(\sigma^-_j, \sigma^+_k)=\angle(\sigma^+_j ,\sigma^+_k)\ge\angle_\infty(\sigma^+_j, \sigma^+_k)\ge\pi/2,
$$
  hence $\angle(\sigma^-_j, \sigma^+_k)=\pi/2$ for any $j,k\in\{1,2,\ldots, m\}$, $j\neq k$. Which means every line $\sigma_j$ is perpendicular to each other, so each of them splits a different $\R$ factor, we have complete the proof.
\end{proof}

\begin{remark}\label{rem:ric}
    We proved that $2$ perpendicular geodesics segments in an asymptotic cone induce $2$ different splitting 
 $\R$ factors for manifolds of nonnegative sectional curvature. This heavily relies on the monotonicity of comparison angles. We do not know if Theorem \ref{thm:splitting} holds for nonnegative Ricci curvature, except for $m=1,n$. The following example may help to illustrate one of the difficulties we face.

 It is observed by Pan--Wei \cite{pan2022examplesbusemann}*{section 1.2} that a surface of revolution $(\R\times \mb S^1,g\defeq\mr{d}r^2+h^2(r)\mr{d}\theta^2)$ can be isometric to a totally geodesic and geodesically complete submanifold in a manifold of positive Ricci curvature, where $h$ can be chosen as $\frac{1}{\ln(2+r^2)}$. Consider a sequence of points $p_k=(k, \theta_0)$, for some fixed $\theta_0\in\mb S^1$ and $k\in \N$. Take a pair of antipodal points $\{ x,-x\}$ in $\mb S^1$, let $\sigma_k^+$ (resp. $\sigma_k^-$) be the geodesic joining $p_k$ and $(0,x)$ (resp. $(0,-x)$). The pGH limit $(\R\times \mb S^1,g,p_k)$ is a line, so $\sigma_k^+$ and $\sigma_k^-$ merge to the same ray in the limit space at infinity, even if the distance between the end points $\dist_g((0,x),(0,-x))=\frac1{\ln 2}>0$. 
\end{remark}

%However, with a stronger condition it is possible to split $\R^{n-1}$, this relies on the fact that if a noncollpsed $\RCD(0,n)$ space locally splits as $\R^{n-1}\times Z$ then the metric space $Z$ must be a one dimensonal manifold, see \cite{BNS20}*{Theorem 3.5}. The condition in the proposition below is satisfied if there are $n-1$ linearly independent linear growth harmonic functions on $M$ and every point on $M$ has asymptotic volume ratio $0$, so this proposition is also a kind of rigidity in this case.

%\begin{proposition}
  %  Let $(M,g)$ be an open Riemannian $n$-manifold satisfying $\Ric_g\ge 0$ and $\vol_g(B_1(x))>v>0$ for all $x\in M$. If any asymptotic cone is isometric to $\R^{n-1}$ then for any sequence $p_i$, then $(M,g,p_i)$ pGH subconverges to a limit space that splits $\R^{n-1}$.
%\end{proposition}

%\begin{proof}
%Fix a sequence $\eps_i\to 0^+$. We can find $R_i\to\infty$, so that at $p_i$ there exists an almost splitting map $(b^i_1,\ldots b^i_{n-1}): B_{6nR_i}(p_i)\to \R^{n-1}$. Then by \cite{BNS20}*{Theorem 3.8 (ii), (iii)}, there exists a metric space $(Z_i,\dist_{Z_i},z_i)$ isometric to a ball in a $1$-manifold, so that 
%$$
%\dist_{\rm{GH}}\left(B_{R_i}(p_i),B_{R_i}^{\R^{n-1}\times Z_i}((0,z_i)\right)<\eps_i.
%$$
%Now if $\limsup_{i}\mr{diam}(Z_i)<\infty$, then $(Z_i,\dist_{Z_i},z_i)$ GH subconverges to $S^1$, otherwise $(Z_i,\dist_{Z_i},z_i)$ subconverges to $\R$. In both case we have $B_{R_i}(p_i)$ subconverges to a limit space that splits $\R^{n-1}$.
%\end{proof}

\section{The two-dimension vanishing theorems}\label{sec:collapse}
This section is devoted to the proof of Theorem \ref{thm:codim2collapseRic} and of Theorem \ref{thm:betti}. %All other theorems are consequences of it. 
Since in this section we deal with many scalings, we will denote a geodesic ball of radius $r>0$ w.r.t. Riemmanian metric $g$ (resp. distance $\dist$) as $B_r^g$ (resp. $B_r^{\dist}$). 

First, let us prove Theorem \ref{thm:codim2collapseRic}. We first recall the rescaling theorem of Kapovitch-Wilking \cite{KapovitchWilking}*{Theorem 5.1}. Here we simplify the setting and only take the first part of it for our purpose. This part can be proved verbatim in $\RCD$ setting because of Lemma \ref{lem:splitLarge} and the propagation of splitting, see for example \cite{deltasplitingRCD}*{Proposition 1.6}. We do not pursue the complete proof. %see also \cite{BNS20}*{Theorem 3.5}. 
\begin{theorem}[\cite{KapovitchWilking}*{Theorem 5.1}]\label{thm:rescaling}
    Let $(X_i,\dist_i,\meas_i, x_i)$ be a sequence of pointed $\RCD(0,N)$ spaces. %with $\sup_i\sup_{y_i\in X_i}\meas_i(B_1(y_i))<\infty$. 
    Assume that $(X_i,\dist_i,\meas_i, x_i)$ pmGH converges to $(\R^k,|\cdot|,\leb^k)$ for $0\le k<\inf_i\mr{essdim}(X_i)$, $k\in \N^+$, then there exists a compact $\RCD(0,N-k)$ space $(D,\dist_D, \meas_D,d)\neq\{\mr{pt}\}$, a sequence of scales $\lambda_i\to \infty$, and a sequence of Borel sets $G_1(x_i)$ with $\meas_i(G_1(x_i))\ge (1-i^{-1})\meas_i(B_1(x_i))$, so that for any $y_i\in G_1(x_i)$, $(X_i,\lambda_i\dist_i,({\meas(B_1^{\lambda_i\dist_i}(y_i))})^{-1}\meas_i, y_i)$ pmGH subconverges to an $\RCD(0,N)$ of isometric type $\R^{k}\times D$. 
\end{theorem}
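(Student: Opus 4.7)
The plan is to adapt the Kapovitch--Wilking strategy \cite{KapovitchWilking}*{Theorem 5.1} to the $\RCD$ setting, using Lemma \ref{lem:splitLarge} in place of their smooth harmonic machinery and the $\RCD$ propagation of $\delta$-splittings \cite{deltasplitingRCD}*{Proposition 1.6} in place of their gradient estimates. The objective is to identify, for each large $i$, a critical rescaling at which an $\R^k$-factor still splits off while a new non-trivial compact factor $D$ emerges.

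\emph{Baseline splitting maps.} Since $(X_i,\dist_i,\meas_i,x_i)$ pmGH-converges to $(\R^k,|\cdot|,\leb^k,0)$, a Mondino--Naber type approximation yields, for each $i$, near-harmonic maps $(b^i_1,\ldots,b^i_k)\colon B_{R_i}(x_i)\to\R^k$ with $R_i\to\infty$ satisfying \eqref{eq:intzero} with error $\eta_i\to 0$. A maximal-function argument then shows that for any scale $\lambda\ge 1$ the good set $G^\lambda_1(x_i)$ of base points $y$ at which the rescaled bound at scale $1$ around $y$ holds with error at most $i^{-1}$ has $\meas_i$-measure at least $(1-i^{-1})\meas_i(B_1(x_i))$.

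\emph{Critical scale and extraction.} For each $i$ I would let $\lambda_i\in[1,\infty]$ be the supremum of those $\lambda$ for which the near-splitting bound above persists after rescaling by $\lambda$. The essential-dimension characterization in \cite{kitabeppu2017sufficient}, namely that $\mr{essdim}(X_i)$ equals the maximal $m$ for which $\R^m$ splits in some tangent cone, combined with $k<\mr{essdim}(X_i)$, forces $\lambda_i<\infty$: otherwise Lemma \ref{lem:splitLarge} applied at arbitrarily large rescalings would make $\R^k$ a factor of $\meas_i$-almost every tangent cone, incompatible with the presence of an extra regular direction. A further diagonal extraction arranges $\lambda_i\to\infty$. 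Setting $G_1(x_i)\defeq G^{\lambda_i}_1(x_i)$ and picking $y_i\in G_1(x_i)$, $\RCD$ compactness makes the rescaled pointed sequence pmGH-subconverge to an $\RCD(0,N)$ space $Y$, and Lemma \ref{lem:splitLarge} applied to the rescaled maps delivers $Y\cong\R^k\times D$ for some pointed $\RCD(0,N-k)$ space $(D,\dist_D,\meas_D,d)$. Non-triviality of $D$ is enforced by the maximality of $\lambda_i$: were $D$ a single point, propagation of $\delta$-splittings would let \eqref{eq:intzero} survive on a slightly larger ball, contradicting the supremum.

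\emph{Main obstacle: compactness of $D$.} This is the step where I expect most of the technical effort to sit. The intended mechanism is that a ray $\gamma\subset D$ would, together with the $\R^k$ already present in $Y$ and the propagation of splitting, produce an additional near-line one scale up after pullback to $X_i$; a second application of Lemma \ref{lem:splitLarge} at a rescaling slightly below $\lambda_i$ would then yield a near $(k+1)$-splitting on $X_i$, contradicting the extremal choice of $\lambda_i$. Making this rigorous requires that the angle between the new line and the $\R^k$ factor stay uniformly bounded away from $0$ and $\pi$ along the approximation, and that the good-set property survive under the second rescaling so that the propagation step runs at $\meas_i$-almost every tangent cone. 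This is essentially the content of the original Kapovitch--Wilking argument, recast in the language of $\delta$-splittings.
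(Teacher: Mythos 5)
Your overall plan is the same as the paper's: the paper in fact writes out no proof at all, asserting only that Kapovitch--Wilking's argument for their Theorem 5.1 carries over verbatim to the $\RCD$ setting once one has Lemma \ref{lem:splitLarge} and the propagation of $\delta$-splittings. The trouble is that in the two places where you substitute your own mechanisms for KW's, the mechanisms as stated fail. The critical scale in KW is selected by a Gromov--Hausdorff criterion --- essentially the smallest radius above which the rescaled balls around a good point remain $\eps$-close to the corresponding ball of $\R^k$ --- not by the persistence of the splitting-map estimates. At good points the estimates \eqref{eq:intzero} persist at \emph{all} scales below $1$ (that is exactly what the maximal-function/propagation step buys), so the supremum you define is typically infinite; and your finiteness argument is incorrect: if $\mr{essdim}(X_i)=m>k$, then $\meas_i$-a.e.\ tangent cone is $\R^m$, which \emph{does} split off $\R^k$, so an ``extra regular direction'' is perfectly compatible with $\R^k$ being a factor of almost every tangent cone. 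What degenerates at small scales is closeness to $\R^k$ alone (the complementary factor becomes visible), not the $k$-splitting. Relatedly, non-triviality of $D$ should come from the failure of $\eps$-closeness to the $\R^k$-ball at the critical scale; with your definition, the observation that the estimates survive a larger rescaling when $D$ is a point only shows your supremum is not attained, and $\lambda_i\to\infty$ has to be deduced from the fact that the closeness radius at good points tends to infinity (since $(X_i,\dist_i,x_i)\to\R^k$ on every fixed ball), not from ``a further diagonal extraction''.

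The step you flag as the main obstacle, compactness of $D$, is also argued differently in KW, and your route would not close. If $D$ were unbounded, take $R_j\to\infty$: the ball $B_{R_j}(y_i)$ at the critical scale, rescaled to unit size, converges to a unit ball in $\R^k\times C$ with $C$ a blow-down of $D$ of diameter at least $1$, hence at a definite GH distance from the unit ball of $\R^k$; this contradicts the defining property of the critical scale, which guarantees $\eps$-closeness to $\R^k$-balls at all intermediate scales between $\lambda_i^{-1}$ and $1$. Your proposal --- a ray in $D$ producing an extra near-line and hence a near-$(k+1)$-splitting at a slightly smaller rescaling, ``contradicting the extremal choice of $\lambda_i$'' --- does not work: a ray is not a line and yields no splitting factor, and in any case a near-$(k+1)$-splitting below scale $\lambda_i$ contradicts nothing, since it implies rather than conflicts with the near-$k$-splitting whose persistence defines your $\lambda_i$. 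To repair the sketch, set up the critical scale via the GH-closeness dichotomy as in KW, run both the non-triviality and compactness steps off that dichotomy, and use Lemma \ref{lem:splitLarge} (with the propagation of $\delta$-splittings at good points) only to show that the rescaled limits still split $\R^k$.
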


\begin{proof}[Proof of Theorem \ref{thm:codim2collapseRic}]
    We argue by contradiction. First notice that the essential dimension cannot be $n$, if it was, then by Theorem \ref{thm:essdimgap} this space is noncollpased. However, recall that we have \eqref{eq:AVR0}, i.e., the asymptotic volume ratio is zero, so it must be collapsed by Theorem \ref{thm:ncRCD}, a contradiction. We assume that there exists a point $q\in M$ and a sequence of scales $r_i\to \infty$, so that $(M,g_i\defeq r_i^{-2}g, (\vol_{g_i}(B^{g_i}_1(q)))^{-1}\vol_{g_i},q)$ pmGH converges to a 
asymptotic cone $(X,\dist, \meas,x)$ of essential dimension $n-1$. Take a regular point $y\in \mc R_{n-1}(X)$, then there exists a sequence of points $\{p_i\}_{i\in\N}$ in $M$ diverging to infinity so that $B_s^{g_i}(p_i)$ pGH converges to $B^{\dist}_s(y)$ for any $s\in [0,1]$. By a diagonal argument there exists a sequence of scales $s_i\to \infty$ so that $(M,s_i^{-2} g, p_i)$ pGH converges to $(\R^{n-1},|\cdot|,0)$, the tangent cone at $y$. Applying Theorem \ref{thm:rescaling} for $(M,s_i^{-2} g, p_i)$, we find another sequence of scales $\lambda_i\to \infty$, and a sequence of points $\{q_i\}_{i\in \N}$ in $M$ so that $(M,\bar g_i\defeq ({\lambda_i}{s_i}^{-1})^{2} g, q_i)$ pGH subconverges (without relabeling) to a $\RCD(0,n)$ space $\R^{n-1}\times D$ with product distance and some limit measure. The splitting theorem \cite{Gigli_splitting} implies that $D$ is a $\RCD(0,1)$ space. Combine with the fact $D\neq \{\mr{pt}\}$, the classification theorem \cite{KL15} in turn implies that $D$ has essential dimension $1$, then $\R^{n-1}\times D$ is noncollapsed, in particular the measure supporting the $\RCD(0,n)$ structure can be taken as the Hausdroff measure $\haus^n$ induced by the product distance. By taking again a subsequence we can assume $\lim_{i\to\infty }\frac{\lambda_i}{s_i}$ exists in $[0,\infty]$. We claim that 
\begin{equation}\label{eq:limitfinite}
    \lim_{i\to\infty }\frac{\lambda_i}{s_i}<\infty.
\end{equation}

 If $\lim_{i\to \infty}\frac{\lambda_i}{s_i}=\infty$. Notice that the limit space is noncollapsed, we have from the first item of Theorem \ref{thm:ncRCD} and Bishop-Gromov inequality that
\begin{equation}
    0<\inf_i \vol_{\bar g_i}g(B^{\bar g_i}_1(q_i))\le \sup_i \vol_{\bar g_i}(B^{\bar g_i}_1(q_i))\le \omega_n,
\end{equation}
where $\omega_n$ is the volume of the unit ball in $\R^n$. Denote by $(0,d)\in \R^{n-1}\times D$ the limit of $q_i$, the volume convergence for noncollapsed spaces then implies that 
\begin{equation}
   \lim_{i\to\infty} \vol_{\bar g_i}(B^{\bar g_i}_R(q_i))=\haus^n(B_R((0,d)))\quad \forall R>0.
\end{equation}
However, it follows from Bishop-Gromov inequality that for $R>1$,
\begin{equation}
    \lim_{i\to\infty} \vol_{\bar g_i}(B_R^{\bar g_i}(q_i))=\lim_{i\to\infty} \frac{\vol_{g}(B_{s_i\lambda_i^{-1}R}^{ g}(q_i))}{(s_i\lambda_i^{-1})^n}\ge \inf_i B^{ g}_1(q_i)R^n= O(R^n),
\end{equation}
while $\haus^n(B_R((0,d)))$ has order $O(R^{n-1})$ as $R\to \infty$, a contradiction.

 Now that we have \eqref{eq:limitfinite}, it follows that $\Sc_{\bar g_i}\ge ({s_i}{\lambda_i}^{-1})^2K$, which remains uniformly positive. Moreover, the limit space is noncollapsed, so $(M,\bar g_i, q_i)$ is a sequence that satisfies the assumptions of Theorem \ref{thm:n-2splitting}, then the limit space of this sequence cannot split $\R^{n-1}$, a contradiction.   
\end{proof}

Now we turn to the first Betti number.

\begin{proof}[Proof of Theorem \ref{thm:betti}]
    %Since $M$ is compact, by splitting theorem \cite{CG71}, every generator of $H_1(M)$ splits a line in the Riemannian universal cover $\tilde M$. It is clear that if $\tilde M$ splits $\R^{n-1}$ it is flat hence cannot have positive scalar curvature.
First we prove the upper bound. Assume the contrary, then there exists a sequence $(M_i,g_i)$ and $\eps_i\to 0^+$ so that $(M_i, g_i)$ satisfies that $\Ric_{g_i}\ge-\eps_i$, $\Sc_g\ge K>0$, $\mr{diam}(M_i,\dist_{g_i})< D$, and $\vol_{g_i}(M_i)>v$ but $b_1(M_i)\ge n-1$. By Theorem \ref{thm:upperBetti}, $(M_i, g_i)$ pGH subcongverges to a compact $\ncRCD(0,n)$ space with $b_1(X)\ge n-1$. Then the proof proceeds exactly the same as that of Theorem \ref{thm:fibration}. %that of \cite{WZZZ_PSC_RLS}*{Corollary 1.4}. 
Essentially we get that the Riemannian universal cover of $M_i$ GH subconverges to a noncollapsed Ricci limit space that is $\R^{n}$, contradicting Theorem \ref{thm:n-2splitting}. 

If in addition $b_1(M)=n-2$ and the Ricci curvature is bounded from above, then $M$ is homeomorphic to a fiber bundle over $\mb T^{b_1(M)}=\mb T^{n-2}$ by Theorem \ref{thm:fibration}. %by the splitting theorem \cite{CG71}, see also \cite{Yauflat}*{Theorem 3} and \cite{BingWangBetti}*{Theorem 2.1}. 

We show that $F$ cannot be aspherical hence must be $\mb S^2$ or $\R P^2$. Recall that $\pi_1(M)$ is finitely generated. %The following argument is due to Igor Belegradek.

If $F$ was aspherical, then from the exact sequence of homotopy groups, $M$ is also aspherical. We show that $\pi_1(M)$ has finite asymptotic dimension, so $M$ does not admit any metric of positive curvature thanks to Theorem \ref{thm:asdimpsc}, a contradiction. Indeed, one has that
\begin{equation}
    \pi_2(\mb T^{n-2})\cong 1\rightarrow \pi_1(F)\rightarrow \pi_1(M)\rightarrow \pi_1(\mb T^{n-2})\cong \mb Z^{n-2}\rightarrow 1.
\end{equation}
It follows from Proposition \ref{prop:asdim} \eqref{item:sum} that $\mr{asdim}(\pi_1(M))\le \mr{asdim}(\mb Z^{n-2})+\mr{asdim}(\pi_1(F))$. Being a closed $2$-surface, there are only $2$ cases for $F$, either $F$ is hyperbolic, then $\pi_1(F)$ is a hyperbolic group, or $F$ is flat. In the former case $\mr{asdim}(\pi_1(F))$ is finite due to Proposition \ref{prop:asdim} \eqref{item:hyperbolic}. In the latter case, %$F=\mb T^2$ or $\mb K$ the Klein bottle. 
in fact, Bieberbach theorem says for any compact flat $n$-manifold $F$, there is always a finite indexed subgroup $\mb Z^n=\pi_1(F)\cap (\{\mr{Id}\}\times \R^n)\le \pi_1(F)\cap \mr{Isom}(\R^n)$. Now that $\mr{asdim} (\mb Z)=1$ (Proposition \ref{prop:asdim} \eqref{item:Z}), recall Proposition \ref{prop:asdim} \eqref{item:product} and \eqref{item:FiniteExtension}, it follows that $\mr{asdim} (\pi_1(F))<\infty$ in either case. %$\mr{asdim}(\pi_1(\mb Z^{n-2}))$. 

Now, we assume in addition that that $3\le n\le 7$, $M$ is orientable and $F=\mb S^2$. We proceed to find an embedded $\mb S^2$ with the desired area upper bound. Consider the projection map $f:M\to \mb T^{n-2}$, Gromov \cite{gromov2020NoPcs5D}*{Theorem 1} claims that there exists a closed $2$-dimensional submanifold $Y\subset X$ that is homologous to the fiber $\mb S^2=f^{-1}(t)$, for all $t\in \mb T^{n-2}$. In dimension $2$ this in turn implies $Y$ is diffeomorphic to $\mb S^2$. Furthermore, one can produce a torical symmetrization of $Y$ by a descent method. We briefly recall the argument. We start by finding a stable minimal surface $\Sigma_1$ in the homology class of $f^{-1}(\mb T^{n-3})$ for some $T^{n-3}\hookrightarrow T^{n-2}$. The restriction $f:\Sigma_1\hookrightarrow \mb T^{n-2}\to \mb T^{n-3}$ is a continuous map. Furthermore, we can equip $\Sigma_1\times \mb S^1$ with a warped product metric $g_{\Sigma_1}+\phi_1^2\mr{d}u_1^2$ for some $\phi_1$ so that its scalar curvature lower bound is the same as $M$. Inductively implementing this procedure, we get a manifold $(X,g_X)=(Y\times \mb T^{n-2}, g_Y+\sum_{i=1}^{n-2}\phi_i^2\mr{d}u_i^2)$ so that $\Sc_{g_X}\ge\Sc_{g}\ge K>0$, where $\phi_i$'s are warping functions coming from the first eigenfunctions of the Jacobi operators and $u_i$'s are coordinates on $\mb S^1$. Meanwhile we have a chain of embedding maps $S^2\cong Y \defeq\Sigma_{n-2}\hookrightarrow \Sigma_{n-1}\hookrightarrow\cdots\hookrightarrow \Sigma_1\hookrightarrow M $. %see also the proof of \cite{Gromov_Zhu_area}*{Theorem 1.1}. 
Once we have $(X,g_X)=(Y\times \mb T^{n-2}, g_Y+\sum_{i=1}^{n-2}\phi_i^2\mr{d}u_i^2)$ with $\Sc_{g_X}\ge K>0$, the area upper bound of $Y$ follows from \cite{ZhuJT_area}*{Lemma 2.3} and the area upper bound of $Y$ is an upper bound for the minimal embedded sphere in $M$. Moreover, in the case of equality, once we in addition have the chain of embedding maps, the rigidity follows from the proof of \cite{ZhuJT_area}*{Theorem 1.2} in Chapter 3. We have complete the proof.
%Then we show the fibration. Given a sequence of complete manifold $(M_i,g_i)$ with uniform Ricci lower bound and uniform diameter upper bound, so that $(M_i,g_i)$ GH converges to $(X,\dist)$. It is pointed out by Jikang Wang that the surjective homomorphism $\Phi_i:\pi_1(M_i)\to\pi_1(X)$ constructed in \cite{Wang_RicciSemilocal}*{Theorem 1.2} takes commutators to commutators, so it descends to a surjective homomorphism $H_1(M_i)\to H_1(X)$, in particular the betti number is lower semicontinuous w.r.t. GH convergence. Combine this with \cite{RodriguezFundamental}*{Theorem 5}, the first betti number is stable under noncollapsed GH convergence. If in addition there is a Ricci upper bound, combine again with honda, we get that the harmonic forms are also stable noncollapsed GH convergence. It now follows from Yamaguchi's argument that 

\end{proof}

 %\begin{remark}
 %Instead of $\Ric_g\ge 0$, there are also $2$ different assumptions that guarantee the fibration. First, we can replace $\Ric_g\ge -\eps$ with $\sec_g\ge -\eps$ for a possibly smaller $\eps>0$. %or strengthen it to $\Ric_g\ge 0$. Both condition 
%This assumption yields a fibration of a finite cover of $M$ over $\mb T^{n-2}$ using \cite{YamaguchiCollapsingPinching}. The other assumption is an additional two-sided sectional curvature bound, then a fiber bundle structure of $M$ over $\mb T^{b_1}$ follows from \cite{YamaguchiAlmostRic}. Whenever a fibration (or a fiber bundle structure) is possible, the same argument characterizes the fiber. 
 %\end{remark}

 %for the fibration in the former case we need \cite{YamaguchiCollapsingPinching}%, for the latter case we use the splitting theorem \cite{CG71}. 

Finally, we prove Theorem \ref{thm:dim_n-1}. The ideas of the proof are due to Jiayin Pan. We record here the new ingredients from Pan--Ye \cite{panyesplitting}.

\begin{proposition}[\cite{panyesplitting}*{Corollary 3.2}]\label{prop:blowdown}
    Let $(M,g)$ be an open $n$-manifold with $\Ric_g\ge 0$ and $p\in M$. Suppose $\mr{Isom}(X)$ contains a closed subgroup $\Gamma\defeq \mb Z^b$ for some $b\in \N^+$. For any $r_i\to\infty$, consider the equivariant convergence
        \[
        (r_i^{-1}M,p,\Gamma)\to (Y,y,G).
        \]
        Then the limit group $G$ contains a closed subgroup $\R^b$. In particular the orbit $Gy$ has topological dimension at least $b$.
    
\end{proposition}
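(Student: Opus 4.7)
The plan is to build a continuous, injective homomorphism $\Phi:\R^b\to G$ whose image provides the desired closed $\R^b$-subgroup of $G$, with injectivity being the main difficulty. Fix generators $\gamma_1,\ldots,\gamma_b$ of $\Gamma\cong\mb Z^b$ and set $L_j^i:=r_i^{-1}\dist_g(p,\gamma_j p)$, which tends to $0$ because $\dist_g(p,\gamma_j p)$ is a fixed positive number and $r_i\to\infty$. For each $t\in\R$ define $k_j^i(t):=\lfloor t/L_j^i\rfloor$; then in the rescaled metric $r_i^{-1}\dist_g$ the iterate $\gamma_j^{k_j^i(t)}$ displaces $p$ by $|t|+O(L_j^i)$, so $\{\gamma_j^{k_j^i(t)}\}_i$ is equicontinuous on every fixed rescaled ball about $p$. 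By the equivariant Arzel\`a--Ascoli theorem built into the equivariant GH convergence, together with a diagonal extraction over a countable dense set of $t$'s, after passing to a subsequence I obtain isometries $\bar\gamma_j(t)\in G$ for all $t\in\R$.

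Next I would promote each $\bar\gamma_j$ to a continuous one-parameter subgroup. The elementary estimate $|k_j^i(s)+k_j^i(t)-k_j^i(s+t)|\le 1$ shows that $\gamma_j^{k_j^i(s)}\gamma_j^{k_j^i(t)}$ and $\gamma_j^{k_j^i(s+t)}$ differ on any fixed rescaled ball by displacement at most $L_j^i\to 0$. Passing to the limit gives $\bar\gamma_j(s+t)=\bar\gamma_j(s)\bar\gamma_j(t)$; continuity at $0$ is immediate from $\dist_Y(y,\bar\gamma_j(t)y)\le |t|$, and commutativity of distinct $\bar\gamma_j$'s is inherited from the abelianness of $\Gamma$. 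Thus I obtain a continuous homomorphism
\[
\Phi:\R^b\to G,\qquad \Phi(t_1,\ldots,t_b):=\bar\gamma_1(t_1)\cdots\bar\gamma_b(t_b).
\]

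The main obstacle is the injectivity of $\Phi$. Suppose for contradiction that $\Phi(t^*)=\mr{id}_Y$ for some nonzero $t^*=(t^*_1,\ldots,t^*_b)\in\R^b$. Then the lattice element $\eta_i:=\gamma_1^{k_1^i(t_1^*)}\cdots\gamma_b^{k_b^i(t_b^*)}\in\Gamma$ satisfies $r_i^{-1}\dist_g(p,\eta_i p)\to 0$, i.e., $\dist_g(p,\eta_i p)=o(r_i)$, while its multi-index has $\ell^1$-norm of order $r_i$. To derive the contradiction I would combine two ingredients. First, closedness of $\Gamma$ in $\mr{Isom}(M)$ forces the orbit $\Gamma p$ to be discrete, so for each $\gamma\in\Gamma$ the subadditive sequence $k\mapsto\dist_g(p,\gamma^k p)$ produces a Fekete limit $\tau(\gamma):=\lim_k k^{-1}\dist_g(p,\gamma^k p)\ge 0$. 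Second, because $\Gamma\cong\mb Z^b$ is discrete and $\Ric_g\ge 0$, Cheeger--Colding almost-splitting applied in the asymptotic cones of $M$ (the heart of Pan--Ye's argument) shows that $\tau$ extends to a genuine norm on $\R^b\supseteq\mb Z^b$, so $\tau(t^*)>0$. Since $r_i^{-1}\dist_g(p,\eta_i p)\to\tau(t^*)$ along the chosen sequence of multi-indices, this contradicts the vanishing of the limit. Once $\Phi$ is injective, $\Phi(\R^b)$ is a closed subgroup of $G$ isomorphic to $\R^b$; moreover the displacement formula $\dist_Y(y,\Phi(t)y)=\tau(t)$ shows the stabilizer of $y$ in $\Phi(\R^b)$ is trivial, so $\Phi(\R^b)y\subseteq Gy$ is homeomorphic to $\R^b$ and therefore $Gy$ has topological dimension at least $b$.
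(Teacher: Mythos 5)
Your proposal has two genuine gaps, and they sit exactly at the two places where the real difficulty of this statement lives (the paper itself gives no proof --- it imports the result from Pan--Ye --- so the proposal has to stand on its own). First, the construction of the one-parameter subgroups conflates ``small displacement at the basepoint'' with ``close to the identity in the equivariant topology''. Convergence of isometries under equivariant pGH convergence is tested on larger and larger balls, i.e.\ on points at $\dist_g$-distance up to $Rr_i$ from $p$, and for such points the displacement of the \emph{fixed} isometry $\gamma_j$ is only bounded by $2Rr_i+\dist_g(p,\gamma_j p)$, not by $\dist_g(p,\gamma_j p)$. Concretely, for a screw motion $\gamma$ of $\R^3$ (irrational rotation about an axis composed with a unit translation along it, a closed $\mb Z$-subgroup), the blow-down limit of the constant sequence $\gamma$ is a nontrivial rotation, not the identity; consequently $\bar\gamma_j(s+t)$ and $\bar\gamma_j(s)\bar\gamma_j(t)$, which differ by a limit of $\gamma_j^{\pm1}$, need not coincide, and $t\mapsto\bar\gamma_j(t)$ need not be continuous at $0$ (the upper bound $\dist_Y(y,\bar\gamma_j(t)y)\le|t|$ only controls the orbit of $y$, not the isometry). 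Relatedly, your claim that $\gamma_j^{k_j^i(t)}$ displaces $p$ by $|t|+O(L_j^i)$ is only an inequality $\le$: the sequence $k\mapsto\dist_g(p,\gamma_j^kp)$ is subadditive, not additive, so no matching lower bound is available.

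Second, and more seriously, the injectivity step rests on the assertion that the stable translation length $\tau(\gamma)=\lim_k k^{-1}\dist_g(p,\gamma^kp)$ extends to a \emph{norm} on $\R^b$, which you justify only by invoking ``Cheeger--Colding almost-splitting \dots the heart of Pan--Ye's argument''. Under $\Ric_g\ge 0$ this is simply false: Nabonnand-type metrics of positive Ricci curvature on manifolds with $\pi_1\cong\mb Z$ give a closed $\mb Z$-action on the universal cover (a warped product with collapsing circle direction) whose orbit growth is sublinear, so $\tau$ vanishes on a generator even though the generator has infinite order and the conclusion of the proposition still holds. Thus the route ``positive translation lengths $\Rightarrow$ injective homomorphism $\Phi:\R^b\to G$ with $\dist_Y(y,\Phi(t)y)=\tau(t)$'' cannot work as stated; handling precisely the case $\tau=0$ (where the rescaled orbit fills in a continuum despite sublinear displacement growth) is the actual content of Pan--Ye's Corollary 3.2, and your argument defers it rather than proving it. Also note that the convergence $r_i^{-1}\dist_g(p,\eta_ip)\to\tau(t^*)$ you use for the contradiction is itself unproved (it needs a homogenization argument even when $\tau$ is a norm). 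As it stands, the proposal does not establish the proposition.
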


\begin{proposition}[\cite{panyesplitting}*{corollary 3.3}]\label{prop:blowup}
    Let $(X,\dist)$ be a Ricci limit space and $p\in X$. Suppose $\mr{Isom}(X)$ contains a closed subgroup $G\defeq \R^b$ for some $b\in \N^+$. For any $r_i\to\infty$, consider the equivariant convergence
        \[
        (r_i X,p,G)\to (Y,y,H).
        \]
        Then the limit group $H$ contains a closed subgroup $\R^b$. In particular the orbit $Hy$ has topological dimension at least $b$. 
\end{proposition}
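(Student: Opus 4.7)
The plan is to build a continuous homomorphism $\R^b \to H$ realizing $\R^b$ as a closed subgroup with $b$-dimensional orbit at $y$, using a displacement-seminorm analysis of the $G$-action on $(X,p)$ combined with equivariant Gromov–Hausdorff convergence.

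First, for each $v \in \R^b$ I would set $D_v(p) \defeq \dist_X(p, \phi_v(p))$, where $\phi_v$ is the isometry of $X$ corresponding to $v \in G$. Since $G$ is abelian, $\phi_{v+w} = \phi_v\circ\phi_w$ and the triangle inequality yield $D_{v+w}(p) \le D_v(p) + D_w(p)$. A standard Fekete-type argument then produces
\[
\tau(v) \defeq \lim_{r \to \infty} r\cdot D_{v/r}(p) = \inf_{r>0} r\cdot D_{v/r}(p) \in [0,\infty),
\]
and $\tau$ is positively homogeneous, subadditive, and symmetric on $\R^b$, hence a seminorm.

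Next, for fixed $s \in \R^b$, the isometry $\phi_{s/r_i}$ of $r_iX$ displaces $p$ by $r_i\cdot D_{s/r_i}(p) \to \tau(s)$; equicontinuity on bounded subsets of $r_iX$ follows from subadditivity together with $G$-equivariance of displacements. Diagonally extracting along a countable dense subset of $\R^b$ and invoking the equivariant GH convergence $(r_iX,p,G)\to(Y,y,H)$, one obtains limit isometries $\tilde\phi_s \in H$, and the relation $\phi_{s/r_i}\circ\phi_{s'/r_i}=\phi_{(s+s')/r_i}$ passes to the limit. Thus $s \mapsto \tilde\phi_s$ is a continuous homomorphism $\R^b \to H$ whose image's orbit at $y$ is controlled by $\tau$.

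The hard part is to rule out degeneracy in the kernel of this homomorphism and realize a full $\R^b$ — not just $\R^b/N$ — inside $H$. The kernel $N \defeq \{v:\tau(v)=0\}$ contains the Lie algebra of the stabilizer $G^0_p$, so when $a\defeq \dim N > 0$ the above construction only yields $\R^{b-a}\subset H$. To recover the missing $a$ directions, one exploits the fact that stabilizer elements — though fixing $p$ in $X$ — act non-trivially on any fixed-size neighborhood of $p$ as seen in the rescaled spaces $r_iX$; a secondary rescaling (with scales different from $r_i^{-1}$) extracts $a$ additional independent one-parameter subgroups in $H$. Commutativity of the resulting $b$ one-parameter subgroups follows from the abelian structure of $G$, and the seminorm $\tau$ descending to a norm on $\R^b/N$ controls the translational part, so combining the two families yields the required closed $\R^b\subset H$ with $\dim Hy\ge b$. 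Making this stabilizer-blow-up analysis rigorous on a general Ricci limit space — rather than on a smooth manifold where infinitesimal Killing-field data would suffice — is the technical content of \cite{panyesplitting}*{Section 3}.
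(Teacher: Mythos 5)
First, note that the paper does not prove this proposition at all: it is quoted verbatim from Pan--Ye \cite{panyesplitting}*{Corollary 3.3}, so there is no internal argument to compare with and your attempt has to stand on its own. As written, it does not.

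The first problem is that your Fekete step points the wrong way. Writing $f(t)\defeq D_{tv}(p)$, subadditivity gives $f(t)\le n\,f(t/n)$, so $r\,D_{v/r}(p)=f(1/r)\big/(1/r)$ is (essentially) \emph{increasing} as $r\to\infty$, and
\[
\lim_{r\to\infty} r\,D_{v/r}(p)=\sup_{t>0}\frac{D_{tv}(p)}{t}\in[0,+\infty],
\]
not $\inf_{r>0} r\,D_{v/r}(p)$; the infimum you wrote is the asymptotic translation norm $\lim_{t\to\infty}D_{tv}(p)/t$, which is the quantity relevant to the blow-\emph{down} statement (Proposition \ref{prop:blowdown}), not to a blow-up along $r_i\to\infty$. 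Consequently the finiteness $\tau(v)<\infty$ that your whole construction needs (otherwise $\phi_{s/r_i}$ has unbounded displacement in $r_iX$ and produces no element of $H$) is precisely the nontrivial point: it amounts to the orbit map $v\mapsto \phi_v(p)$ being Lipschitz at small scales, and subadditivity alone cannot give this (translation-invariant metrics on $\R^b$ with, say, square-root behaviour at small scales are perfectly subadditive). Ruling such behaviour out requires the structure of $X$ as a Ricci limit space, and you give no argument for it.

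The second problem is that your ``hard part'' is misdiagnosed and then outsourced. Since $X$ is proper, $\mr{Isom}(X)$ acts properly, so the closed subgroup $G\cong\R^b$ acts properly and the stabilizer $G_p$ is a compact subgroup of $\R^b$, hence trivial; there is no ``Lie algebra of the stabilizer $G^0_p$'' to worry about, and with the correct sup-formula one even has $\tau(v)\ge D_v(p)>0$ for $v\neq 0$, so the kernel $N$ you try to repair is trivial. The genuine degeneracy risk is the opposite one, $\tau(v)=+\infty$ in some directions, which your linear reparametrization $s\mapsto \phi_{s/r_i}$ cannot handle; an anisotropic rescaling could in principle fix this, but you do not carry it out and instead declare it to be ``the technical content of \cite{panyesplitting}*{Section 3}'' --- i.e.\ you defer the decisive step to the very result being proved. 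For completeness: if one \emph{does} know $\tau$ is finite, the remainder of your outline is sound --- $\tau$ is then a norm, the limits $\tilde\phi_s$ exist with $\dist_Y(y,\tilde\phi_s y)=\tau(s)$, the group law passes to the limit, and injectivity plus properness of the displacement give a closed copy of $\R^b$ in $H$ whose orbit at $y$ is homeomorphic to $\R^b$, hence of topological dimension $b$. The gap is exactly the finiteness/degeneracy analysis, which is where the actual work in \cite{panyesplitting} lies.
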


\begin{proof}[Proof of Theorem \ref{thm:dim_n-1}]
    Let $\Gamma=\pi_1(M)/[\pi_1(M),\pi_1(M)]$, $b\defeq b_1(M)$, $h\defeq h_1(M)$, and $\tilde M$ be the universal cover of $M$. Consider the covering space correspond to $\Gamma$, $\overline M\defeq \tilde M/[\pi_1(M),\pi_1(M)]$ and $\bar p\in \overline{ M}$ the lift of $p$. Note that $\mr{Isom}(\overline{M})$ contains a closed subgroup $\mb Z^b\le \Gamma$. Using Theorem \ref{thm:AsymSplitting}, we have the following equivariant GH convergence of blow-down. 
     \[
     \begin{tikzcd}
(r_i^{-1}\overline M, \bar p, \Gamma) \arrow[r, "\mr{GH}"] \arrow[d,"\pi"]
& (\R^{h-1}\times \overline Y, (0,\bar y) ,G) \arrow[d, "\pi"] \\
(r_i^{-1} M, p)\arrow[r, "\mr{GH}"]
& |[]| (\R^{h-1}\times Y,(0,y)),
\end{tikzcd}
     \]
  By Proposition \ref{prop:blowdown}, $G$ contains a closed subgroup $\R^b$ and clearly $G$ acts only on $\overline Y$. Meanwhile by Theorem \ref{thm:codim2collapseRic}, $\R^{h-1}\times \overline Y$ has essential dimension at most $n-2$ since $\overline{M}$ satisfies all the assumptions there when $M$ does. Now find a regular point $q$ in $\R^{h-1}\times \overline Y$, take the tangent of $(\R^{h-1}\times \overline Y,q, G)$, we get $(\R^{h-1}\times\R^k,0,H)$, where $k\in \N^+$. By Proposition \ref{prop:blowup}, $H$ has a closed subgroup $\R^b$ and the orbit $H\cdot 0$ sits inside $\R^k$ and homeomorphic to $\R^n$, which implies $k\ge b$. Then by the lower semiconinuity of the essential dimension, $k+h-1\le n-2$, which gives $h+b\le n-1$ as desired. The proof of $b_1(M)\le n-3$ goes exactly the same way. Assume on the contrary $b_1(M)\ge n-2$. Using the same notations we have the following equivariant GH convergence.
 \[
     \begin{tikzcd}
(r_i^{-1}\overline M, \bar p, \Gamma) \arrow[r, "\mr{GH}"] \arrow[d,"\pi"]
& ( \overline Y, \bar y ,G) \arrow[d, "\pi"] \\
(r_i^{-1} M, p)\arrow[r, "\mr{GH}"]
& |[]| (Y,y),
\end{tikzcd}
\]
and $G$ has a closed $\R^{n-2}$ subgroup thanks to Proposition \ref{prop:blowdown}. Take a regular point $q\in Y$ and its lift $\bar q\in \overline Y$, we get an equivariant GH convergence by taking the tangent cone at $q$ and $\bar q$ for some sequence scaling $s_i\to\infty$ as follows 
 \[
     \begin{tikzcd}
(s_i\overline Y, \bar q, G) \arrow[r, "\mr{GH}"] \arrow[d,"\pi"]
& ( \R^k\times Z, (0,z) ,H) \arrow[d, "\pi"] \\
(s_i Y, q)\arrow[r, "\mr{GH}"]
& |[]| (\R^k,0),
\end{tikzcd}
\]
where $k$ is the essential dimension of $Y$. Since $Y$ is not a point we infer that $k\ge 1$. Thanks to Proposition \ref{prop:blowup} $H$ also contains a closed $\R^{n-2}$ subgroup, and the orbit $H\cdot z$ sits inside $Z$. By taking a regular point in $\R^k\times Z$ and taking tangent cone again, we see that the tangent cone has essential dimension $k+n-2\ge n-1$, this also contradicts Theorem \ref{thm:codim2collapseRic} by the lower semicontinuity of essential dimension because $\overline Y$ has essential dimension $n-2$.

There is also an alternative proof of $b_1(M)\le n-3$. We use Anderson's argument in \cite{andersontopology}. Let $(\tilde M,\tilde g)$ be the Riemannian universal cover of $(M,g)$. To show $b_1\le n-3$ it suffices to show that the polynomial growth rate of any finitely generated subgroup of $\pi_1(M)$ is at most $n-3$. More precisely, fix a base point $p\in M$ and let $\tilde p$ be its lift. Let $\Gamma$ be a finitely generated subgroup of $\pi_1(M,p)$ with a symmetric generating set $S$, and associated word metric $\dist_S$. For any $R>0$, let $\Gamma(R)\defeq\{g\in \Gamma| \dist_S(g,e)\le R\}$. It suffices to show $|\Gamma(R)|\le C R^{n-3}$ for some constant $C$ and large $R$. By the argument in the proof of \cite{andersontopology}*{Theorem 1.1}, there exists a constant $c\defeq c(n,M)>0$ so that 
\begin{equation}\label{eq:poly}
    |\Gamma(R)|\le \frac{\vol_{\tilde g}(B_{cR}(\tilde p))}{\vol_g(B_R(p))}.
\end{equation}
Recall the proof that manifolds with nonnegative Ricci curvature having at least linear growth, it shows $\vol_g(B_r(p))\ge c(n)\vol_g(B_1(p)) r$. Our assumption $\vol_g(B_1(p))\ge v>0$ implies that the lower bound of 
$\vol_g(B_r(p))$ does not depend on $p$. On the other hand, it is shown in \cite{WZZZ_PSC_RLS}*{Theorem 1.6} there exists universal constant $c(n)$ so that for any $R\ge 1$
\[
\inf_{p\in M} \vol_g(B_R(p))\le cR^{n-2}.
\]
Since $|\Gamma(R)|$ also does not depend on the base point $p\in M$, by taking infimum in \ref{eq:poly} we get
\[
|\Gamma(R)|\le C(n,v)R^{n-3}.
\]
This is enough to conclude $b_1(M)\le n-3$ thanks to Anderson \cite{andersontopology}*{Theorem 1.3}.
\end{proof}

Theorem \ref{thm:codim2collapse} follows directly from Theorem \ref{thm:codim2collapseRic}, we can use the more elementary splitting procedure, Theorem \ref{thm:splitting}.

%For noncompact manifolds of nonnegative sectional curvature and uniformly positive scalar curvature we also have a sharp $b_1$ bound.

%\begin{proposition}
   % Let $(M,g)$ be an open Riemannian $n$-manifold of nonnegative sectional curvature, $n\ge 3$. If $\Sc_g\ge K>0$ then $b_1(M)\le n-3$.
%\end{proposition}

%\begin{proof}
    %  Since $M$ is noncompact, it has at least linear volume growth, as shown by Yau\cite{YauLinear}. It is shown by Petrunin \cite{Petrunin_ScalarIntegral} that the there exists a constant $C\defeq C(n)$ such that for every $p\in M$
  %  \[
  %      \int_{B_1(p)}\Sc_g \mr{d}\vol_g\le C.
   % \]
   % A rescaling along with the lower bound $\Sc_g\ge K>0$, in turn yields that there exists $C\defeq C(n,K)$, such that
  %  \begin{equation}\label{eq:volGrowth_n-2}
  %    \limsup_{r\to \infty} \frac{\vol_g(B_r(p))}{r^{n-2}}\le C.
  %  \end{equation}
  %  Then it immediately follows from Anderson \cite{andersontopology}*{Theorem 1.1, 1.3} that the $b_1(M)\le n-3$.
%\end{proof}

\begin{remark}
 In the setting of Theorem \ref{thm:dim_n-1}, the rigidity when $b_1(M)=n-3$ is an interesting question. However it is not clear in this case if the universal cover will split off $\R^{n-3}$. If we strengthen our assumption to that $M$ has nonnegative sectional curvature, then the universal cover $(\tilde M,\tilde g)$ of $(M,g)$ satisfies exactly one of the following.
\begin{itemize}
\item $(\tilde M,\tilde g)$ is isometric to $\R^{n-2}\times \mb S^2$, where $\R^{n-2}$ is Euclidean and $\mb S^2$ carries a metric of sectional curvature lower bound $K/2$.
\item $(\tilde M,\tilde g)$ is isometric to	$\R^{n-3}\times \R^3$, where $\R^{n-3}$ is Euclidean and $\R^3$ has only one end, carrying a metric of scalar curvature lower bound $K$.
\end{itemize}
We provide a sketch of proof. Notice that the first Betti number carries over to the soul of $M$, hence $b_1(M)= n-3$ implies that $(\tilde M,\tilde g)$ splits an Euclidean $\R^{n-3}$ factor isometrically. Write $\tilde M=\R^{n-3}\times N^3$. It follows that $N^3$ is noncompact. Otherwise $\text{Isom}(\tilde M)=\text{Isom}((\R^{n-3}))\times \text{Isom}(N^3)$, and by the argument in the proof of Theorem \ref{thm:fibration}, we get that $M=(\mb T^{n-3}\times N^3)/(\pi_1(M)/\mb Z^{n-3})$, which implies that $M$ is compact, a contradiction. Next, from the classification of open $3$-manifolds with nonnegative Ricci curvature \cite{Liu_3Dclassification}, $N^3$ either splits as $\R\times \mb S^2$ with the scalar curvature lower bound being the sectional curvature lower bound of $\mb S^2$, or $N^3$ is diffeomorphic to $\R^3$ with only one end. This completes the proof of the above classification. 

For $N^3=\R^3$ with metric $g_N$ of $\text{Sc}_{g_N}>K$, and having only one end, we can see the geometrical consequences of positive scalar curvature by the pGH limit $(\R^3, g_N, p_i)$ for $p_i\to \infty$, which is a kind a limit model of its end. Let $x_0\in N^3$ be its soul, then $\dist(\cdot,x_0)$ has no critical points except for $x_0$ itself. It follows that every geodesic ball centered at $x_0$ is homeomorphic (hence diffeomorphic) to the Euclidean ball, so the end of $N$ is diffeomorphic to $[0,\infty)\times \mb S^2$. %It makes sense since for $i$ large enough, $p_i$ must locate in the end. 

From now on we assume that the pGH limit of $(N^3,g_N, p_i)$ exists. First notice that the limit space must split a line because of Corollary \ref{cor:splittingatinfinity}. If $(N^3,g_N, p_i)$ is noncollapsing, then Theorem \ref{thm:n-2splitting} and \cite{ZhuZhu23}*{Propsition 3.1} (in particular the orientability excludes $\R P^2$) implies that the limit space is $\R\times \mb S^2$ with $\mb S^2$ carrying a metric of curvature lower bound $K/2$ in the sense of Alexandrov, which give rise to a diameter bound of this $\mb S^2$. This can be viewed as a diameter upper bound for the $\mb S^2$ slices in the end by the almost splitting theorem. If $(\R^3,\tilde g, p_i)$ is collapsing, the most important observation was already made in \cite{Chodosh_Chao_volumegrowth}*{Lemma 2.2, claim 1}, that is, if the points $p_i$ are taken along a ray, writing the limit space as $\R\times Y$, then $Y$ has bounded diameter. We observe that from the proof of Lemma \ref{lem:splittingatinfinity}, for arbitrary sequence of $p_i\to \infty$ there exists unit speed minimizing geodesic $\gamma_i:[0, \frac12 \dist(p_i,x_0)]\to N$ so that $(N^3,g_N, p_i)$ and $(N^3,g_N, \gamma_i(\frac14 \dist(p_i,x_0)))$ converge to the same limit space, then for large $i$, the argument of \cite{Chodosh_Chao_volumegrowth}*{Lemma 2.2 claim 1} can still be applied to $\gamma_i$. Thus, the limit of $(N^3,g_N, p_i)$ can only possibly be $\R\times \mb S^1$, $\R\times [0,a]$, $a\in(0,\infty)$ or $\R\times \{\text{pt}\}$, i.e., $\R^2$ and $\R\times [0,\infty)$ are ruled out. We do not expect that $\R\times \mb S^1$ can actually be a limit space. For example this may be seen by the local fibration theorem \cite{ShioyaYamaguchi_collapsed3Dmanifolds}*{Theorem 3.5} for $3$-dimensional collapsed manifolds. however, this is not related to scalar curvature so we do not pursue this direction. It is easy to see that every other candidate can be realized. %it follows from \eqref{eq:volGrowth_n-2} (applying to $\tilde M$) that $N^3$ has linear volume growth, by \cite{antonelli2023isoperimetric}*{Theorem } the limit space is $\R\times K$ for some compact Alexandrov space $K$, hence $K$ is either a point or $\mb S^1$. %For topological reasons, $K$ cannot be $\mb S^1$. 
\end{remark}

\textbf{Disclosure.} There is no conflict of interests in the present paper. There is no data associated to the present paper.

\bibliographystyle{amsalpha}
\bibliography{lineandpsc}

@article {andersontopology,
    AUTHOR = {Anderson, Michael T.},
     TITLE = {On the topology of complete manifolds of nonnegative {R}icci
              curvature},
   JOURNAL = {Topology},
  FJOURNAL = {Topology. An International Journal of Mathematics},
    VOLUME = {29},
      YEAR = {1990},
    NUMBER = {1},
     PAGES = {41--55},
      ISSN = {0040-9383},
   MRCLASS = {53C20 (57M99)},
MRNUMBER = {1046624},
MRREVIEWER = {Maria Helena Noronha},
       DOI = {10.1016/0040-9383(90)90024-E},
       URL = {https://doi.org/10.1016/0040-9383(90)90024-E},
}

@article{MW_geometryofpsc,
  doi = {10.48550/ARXIV.2201.05595},
  
  url = {https://arxiv.org/abs/2201.05595},
  
  author = {Munteanu, Ovidiu and Wang, Jiaping},
  
  keywords = {Differential Geometry (math.DG), Analysis of PDEs (math.AP), FOS: Mathematics, FOS: Mathematics},
  
  title = {Geometry of three-dimensional manifolds with scalar curvature lower bound, preprint, \href{https://arxiv.org/abs/2201.05595}{arXiv:2201.05595}},
  
  publisher = {arXiv},
  
  year = {2022},
  
  copyright = {Creative Commons Attribution 4.0 International}
}

@article{Chodosh_Chao_volumegrowth,
 AUTHOR = {Chodosh, Otis and Li, Chao and Stryker, Douglas},
     TITLE = {Volume growth of 3-manifolds with scalar curvature lower
              bounds},
   JOURNAL = {Proc. Amer. Math. Soc.},
  FJOURNAL = {Proceedings of the American Mathematical Society},
    VOLUME = {151},
      YEAR = {2023},
    NUMBER = {10},
     PAGES = {4501--4511},
      ISSN = {0002-9939,1088-6826},
   MRCLASS = {53C21 (53C42)},
  MRNUMBER = {4643334},
       DOI = {10.1090/proc/16521},
       URL = {https://doi.org/10.1090/proc/16521},
}

@article{Chodosh_Li_Soapbubble,
  AUTHOR = {Chodosh, Otis and Li, Chao},
     TITLE = {Generalized soap bubbles and the topology of manifolds with
              positive scalar curvature},
   JOURNAL = {Ann. of Math. (2)},
  FJOURNAL = {Annals of Mathematics. Second Series},
    VOLUME = {199},
      YEAR = {2024},
    NUMBER = {2},
     PAGES = {707--740},
      ISSN = {0003-486X,1939-8980},
   MRCLASS = {53C21 (53A10)},
  MRNUMBER = {4713021},
       DOI = {10.4007/annals.2024.199.2.3},
       URL = {https://doi.org/10.4007/annals.2024.199.2.3},
}

@article {Shen_Largemanifold,
    AUTHOR = {Shen, Zhongmin},
     TITLE = {{Complete manifolds with nonnegative {R}icci curvature and
              large volume growth}},
   JOURNAL = {Invent. Math.},
  FJOURNAL = {Inventiones Mathematicae},
    VOLUME = {125},
      YEAR = {1996},
    NUMBER = {3},
     PAGES = {393--404},
      ISSN = {0020-9910},
       DOI = {10.1007/s002220050080},
       URL = {https://doi.org/10.1007/s002220050080},
  MRCLASS = {53C21 (53C20 53C23)},
  MRNUMBER = {1400311},	
  MRREVIEWER = {Mingliang Cai},
}

@article {Cai_Largemanifold,
    AUTHOR = {Cai, Mingliang},
     TITLE = {{On {G}romov's large {R}iemannian manifolds}},
   JOURNAL = {Geom. Dedicata},
  FJOURNAL = {Geometriae Dedicata},
    VOLUME = {50},
      YEAR = {1994},
    NUMBER = {1},
     PAGES = {37--45},
      ISSN = {0046-5755},
   MRCLASS = {53C23 (53C20)},
  MRNUMBER = {1280793},
MRREVIEWER = {Zhongmin Shen},
       DOI = {10.1007/BF01263649},
       URL = {https://doi.org/10.1007/BF01263649},
}

@article {Zhu_Geometryofpsc,
    AUTHOR = {Zhu, Bo},
     TITLE = {Geometry of positive scalar curvature on complete manifold},
   JOURNAL = {J. Reine Angew. Math.},
  FJOURNAL = {Journal f\"{u}r die Reine und Angewandte Mathematik. [Crelle's
              Journal]},
    VOLUME = {791},
      YEAR = {2022},
     PAGES = {225--246},
      ISSN = {0075-4102},
   MRCLASS = {Prelim},
   MRNUMBER = {4489630},
       DOI = {10.1515/crelle-2022-0049},
       URL = {https://doi.org/10.1515/crelle-2022-0049},
}

@article {Gromov_metric_inequality,
    AUTHOR = {Gromov, Misha},
     TITLE = {{Metric inequalities with scalar curvature}},
   JOURNAL = {Geom. Funct. Anal.},
  FJOURNAL = {Geometric and Functional Analysis},
    VOLUME = {28},
      YEAR = {2018},
    NUMBER = {3},
     PAGES = {645--726},
      ISSN = {1016-443X},
   MRCLASS = {53C21 (53C20 53C24 58J20)},
     MRNUMBER = {3816521},
MRREVIEWER = {David J. Wraith},
       DOI = {10.1007/s00039-018-0453-z},
       URL = {https://doi.org/10.1007/s00039-018-0453-z},
}

@article{Gromov_four_lectures,
  doi = {10.48550/ARXIV.1908.10612},
  url = {https://arxiv.org/abs/1908.10612},
  author = {Gromov, Misha},
  keywords = {Differential Geometry (math.DG), FOS: Mathematics, FOS: Mathematics},
  title = {Four Lectures on Scalar Curvature},
   eprint={1908.10612},
   archivePrefix={arXiv},
  year = {2019},
  copyright = {arXiv.org perpetual, non-exclusive license}
}

@article{BrenaGigliHondaZhu,
url = {https://doi.org/10.1515/crelle-2022-0071},
title = {Weakly non-collapsed $\mathrm{RCD}$ spaces are strongly non-collapsed},
author = {Camillo Brena and Nicola Gigli and Shouhei Honda and Xingyu Zhu},
pages = {215--252},
volume = {2023},
number = {794},
journal = {Journal für die reine und angewandte Mathematik (Crelles Journal)},
doi = {doi:10.1515/crelle-2022-0071},
year = {2023},
lastchecked = {2023-04-17}
}

@article{kitabeppu2017sufficient,
	author = {Kitabeppu, Yu},
	date-added = {2022-10-05 15:45:43 -0400},
	date-modified = {2022-10-05 15:45:43 -0400},
	doi = {10.1007/s11118-018-9708-4},
	fjournal = {Potential Analysis. An International Journal Devoted to the Interactions between Potential Theory, Probability Theory, Geometry and Functional Analysis},
	issn = {0926-2601},
	journal = {Potential Anal.},
	mrclass = {51F99 (53C23)},
	mrnumber = {3983504},
	number = {2},
	pages = {179--196},
	title = {A sufficient condition to a regular set being of positive measure on {$\rm{RCD}$} spaces},
	url = {https://doi.org/10.1007/s11118-018-9708-4},
	volume = {51},
	year = {2019},
	bdsk-url-1 = {https://doi.org/10.1007/s11118-018-9708-4}
 }

@article{BrueSemola20Constancy,
   author = {Brué, Elia and Semola, Daniele},
   title = {{Constancy of the Dimension for ${\rm RCD}(K, N)$ Spaces via Regularity of Lagrangian Flows}},
   journal = {Communications on Pure and Applied Mathematics},
   volume = {73},
   number = {6},
   pages = {1141-1204},
   ISSN = {0010-3640
1097-0312},
   DOI = {10.1002/cpa.21849},
   year = {2020},
   type = {Journal Article}
}

@book{BBI01,
   author = {Burago, D. and Burago, Y. and Ivanov, S.},
   title = {{A Course in Metric Geometry}},
   publisher = {American Mathematical Society},
   ISBN = {9780821821299},
   url = {https://books.google.ca/books?id=afnlx8sHmQIC},
   year = {2001},
   type = {Book}
}

@article {Cheeger-Colding97I,
    AUTHOR = {Cheeger, Jeff and Colding, Tobias H.},
     TITLE = {On the structure of spaces with {R}icci curvature bounded
              below. {I}},
   JOURNAL = {J. Differential Geom.},
  FJOURNAL = {Journal of Differential Geometry},
    VOLUME = {46},
      YEAR = {1997},
    NUMBER = {3},
     PAGES = {406--480},
      ISSN = {0022-040X},
   MRCLASS = {53C21 (53C20)},
  MRNUMBER = {1484888},
MRREVIEWER = {William P. Minicozzi, II},
       URL = {http://projecteuclid.org/euclid.jdg/1214459974},
}

@article {CN12,
    AUTHOR = {Colding, Tobias Holck and Naber, Aaron},
     TITLE = {Sharp {H}\"{o}lder continuity of tangent cones for spaces with a
              lower {R}icci curvature bound and applications},
   JOURNAL = {Ann. of Math. (2)},
  FJOURNAL = {Annals of Mathematics. Second Series},
    VOLUME = {176},
      YEAR = {2012},
    NUMBER = {2},
     PAGES = {1173--1229},
      ISSN = {0003-486X},
   MRCLASS = {53C21 (53C20)},
  MRNUMBER = {2950772},
MRREVIEWER = {Yu Ding},
       DOI = {10.4007/annals.2012.176.2.10},
       URL = {https://doi.org/10.4007/annals.2012.176.2.10},
}

@article {CN11,
    AUTHOR = {Colding, Tobias H. and Naber, Aaron},
     TITLE = {Characterization of tangent cones of noncollapsed limits with
              lower {R}icci bounds and applications},
   JOURNAL = {Geom. Funct. Anal.},
  FJOURNAL = {Geometric and Functional Analysis},
    VOLUME = {23},
      YEAR = {2013},
    NUMBER = {1},
     PAGES = {134--148},
      ISSN = {1016-443X},
   MRCLASS = {53C20 (53C21)},
  MRNUMBER = {3037899},
MRREVIEWER = {Yu Ding},
       DOI = {10.1007/s00039-012-0202-7},
       URL = {https://doi.org/10.1007/s00039-012-0202-7},
}

@article {DPG17,
    AUTHOR = {De Philippis, Guido and Gigli, Nicola},
     TITLE = {Non-collapsed spaces with {R}icci curvature bounded from
              below},
   JOURNAL = {J. \'{E}c. polytech. Math.},
  FJOURNAL = {Journal de l'\'{E}cole polytechnique. Math\'{e}matiques},
    VOLUME = {5},
      YEAR = {2018},
     PAGES = {613--650},
      ISSN = {2429-7100},
   MRCLASS = {53C23 (53C21)},
  MRNUMBER = {3852263},
MRREVIEWER = {Shouhei Honda},
       DOI = {10.5802/jep.80},
       URL = {https://doi.org/10.5802/jep.80},
}

@article {Gigli_splitting,
    AUTHOR = {Gigli, Nicola},
     TITLE = {An overview of the proof of the splitting theorem in spaces
              with non-negative {R}icci curvature},
   JOURNAL = {Anal. Geom. Metr. Spaces},
  FJOURNAL = {Analysis and Geometry in Metric Spaces},
    VOLUME = {2},
      YEAR = {2014},
    NUMBER = {1},
     PAGES = {169--213},
   MRCLASS = {53C23 (53C21)},
  MRNUMBER = {3210895},
MRREVIEWER = {Yu Ding},
       DOI = {10.2478/agms-2014-0006},
       URL = {https://doi.org/10.2478/agms-2014-0006},
}

@article {KL15,
    AUTHOR = {Kitabeppu, Yu and Lakzian, Sajjad},
     TITLE = {Characterization of low dimensional {$\mathrm{RCD}^*(K,N)$} spaces},
   JOURNAL = {Anal. Geom. Metr. Spaces},
  FJOURNAL = {Analysis and Geometry in Metric Spaces},
    VOLUME = {4},
      YEAR = {2016},
    NUMBER = {1},
     PAGES = {187--215},
   MRCLASS = {53C21 (53C23)},
  MRNUMBER = {3550295},
MRREVIEWER = {Nan Li},
       DOI = {10.1515/agms-2016-0007},
       URL = {https://doi.org/10.1515/agms-2016-0007},
}

@article {GMS13,
    AUTHOR = {Gigli, Nicola and Mondino, Andrea and Savar\'{e}, Giuseppe},
     TITLE = {Convergence of pointed non-compact metric measure spaces and
              stability of {R}icci curvature bounds and heat flows},
   JOURNAL = {Proc. Lond. Math. Soc. (3)},
  FJOURNAL = {Proceedings of the London Mathematical Society. Third Series},
    VOLUME = {111},
      YEAR = {2015},
    NUMBER = {5},
     PAGES = {1071--1129},
      ISSN = {0024-6115},
   MRCLASS = {53C23 (28A33 53C21 58J35)},
  MRNUMBER = {3477230},
       DOI = {10.1112/plms/pdv047},
       URL = {https://doi.org/10.1112/plms/pdv047},
}

@article {ZhuJT_area,
    AUTHOR = {Zhu, Jintian},
     TITLE = {{Rigidity of area-minimizing {$2$}-spheres in {$n$}-manifolds
              with positive scalar curvature}},
   JOURNAL = {Proc. Amer. Math. Soc.},
  FJOURNAL = {Proceedings of the American Mathematical Society},
    VOLUME = {148},
      YEAR = {2020},
    NUMBER = {8},
     PAGES = {3479--3489},
      ISSN = {0002-9939},
   MRCLASS = {53C24 (53C42)},
   MRNUMBER = {4108854},
MRREVIEWER = {Otis Chodosh},
       DOI = {10.1090/proc/15033},
       URL = {https://doi.org/10.1090/proc/15033},
}

@article {CN15_codim4,
    AUTHOR = {Cheeger, Jeff and Naber, Aaron},
     TITLE = {{Regularity of {E}instein manifolds and the codimension 4
              conjecture}},
   JOURNAL = {Ann. of Math. (2)},
  FJOURNAL = {Annals of Mathematics. Second Series},
    VOLUME = {182},
      YEAR = {2015},
    NUMBER = {3},
     PAGES = {1093--1165},
      ISSN = {0003-486X},
   MRCLASS = {53C25 (53C23)},
   MRNUMBER = {3418535},
MRREVIEWER = {Luis Guijarro},
       DOI = {10.4007/annals.2015.182.3.5},
       URL = {https://doi.org/10.4007/annals.2015.182.3.5},
}

@article {PanWang_universal,
    AUTHOR = {Pan, Jiayin and Wang, Jikang},
     TITLE = {Some topological results of {R}icci limit spaces},
   JOURNAL = {Trans. Amer. Math. Soc.},
  FJOURNAL = {Transactions of the American Mathematical Society},
    VOLUME = {375},
      YEAR = {2022},
    NUMBER = {12},
     PAGES = {8445--8464},
      ISSN = {0002-9947},
   MRCLASS = {53C20},
   MRNUMBER = {4504644},
       DOI = {10.1090/tran/8549},
       URL = {https://doi.org/10.1090/tran/8549},
}

@article{Wang_RicciSemilocal,
  doi = {10.48550/ARXIV.2104.02460},
  
  url = {https://arxiv.org/abs/2104.02460},
  
  author = {Wang, Jikang},
  
  keywords = {Differential Geometry (math.DG), FOS: Mathematics, FOS: Mathematics},
  
  title = {Ricci Limit Spaces Are Semi-locally Simply Connected},
  
  publisher = {arXiv},
  
  year = {2021},
   eprint={2104.02460},
      archivePrefix={arXiv},
  
  copyright = {arXiv.org perpetual, non-exclusive license}
}

@article{RodriguezFundamental,
  AUTHOR = {Santos-Rodr\'iguez, Jaime and Zamora-Barrera, Sergio},
     TITLE = {On fundamental groups of {RCD} spaces},
   JOURNAL = {J. Reine Angew. Math.},
  FJOURNAL = {Journal f\"ur die Reine und Angewandte Mathematik. [Crelle's
              Journal]},
    VOLUME = {799},
      YEAR = {2023},
     PAGES = {249--286},
      ISSN = {0075-4102,1435-5345},
   MRCLASS = {53C23},
  MRNUMBER = {4595312},
MRREVIEWER = {Luis\ Guijarro},
       DOI = {10.1515/crelle-2023-0027},
       URL = {https://doi.org/10.1515/crelle-2023-0027},
}

@article{WZZZ_PSC_RLS,
  doi = {10.48550/ARXIV.2212.10416},
  
  url = {https://arxiv.org/abs/2212.10416},
  
  author = {Wang, Jinmin and Xie, Zhizhang and Zhu, Bo and Zhu, Xingyu},
  
  keywords = {Differential Geometry (math.DG), Metric Geometry (math.MG), FOS: Mathematics, FOS: Mathematics, 53C21, 53C23},
  
  title = {Positive Scalar Curvature Meets Ricci Limit Spaces},
  eprint={2212.10416},
      archivePrefix={arXiv},
  year={2022},
  
  copyright = {arXiv.org perpetual, non-exclusive license}
}

@article{antonelli2023isoperimetric,
      title={Isoperimetric problem and structure at infinity on Alexandrov spaces with nonnegative curvature}, 
      author={Gioacchino Antonelli and Marco Pozzetta},
      year={2023},
       eprint={2302.10091},
      archivePrefix={arXiv},
      primaryClass={math.DG}
}

@article{andoni2022ancient,
          AUTHOR = {Lynch, Stephen and Abrego, Andoni Royo},
     TITLE = {Ancient solutions of {R}icci flow with type {I} curvature
              growth},
   JOURNAL = {J. Geom. Anal.},
  FJOURNAL = {Journal of Geometric Analysis},
    VOLUME = {34},
      YEAR = {2024},
    NUMBER = {5},
     PAGES = {Paper No. 119, 17},
      ISSN = {1050-6926,1559-002X},
   MRCLASS = {53E20 (53C20)},
  MRNUMBER = {4715385},
       DOI = {10.1007/s12220-024-01558-0},
       URL = {https://doi.org/10.1007/s12220-024-01558-0},
}

@article{pan2022examplesbusemann,
      title={Examples of open manifolds with positive Ricci curvature and non-proper Busemann functions, }, 
      author={Jiayin Pan and Guofang Wei},
      year={2022},
      eprint={2203.15211},
      archivePrefix={arXiv},
      primaryClass={math.DG}
}

@article{KapovitchWilking,
      title={Structure of fundamental groups of manifolds with Ricci curvature bounded below, }, 
      author={Vitali Kapovitch and Burkhard Wilking},
      year={2011},
      eprint={1105.5955},
      archivePrefix={arXiv},
      primaryClass={math.DG}
}

@article {LiTamLinear,
    AUTHOR = {Li, Peter and Tam, Luen-Fai},
     TITLE = {Linear growth harmonic functions on a complete manifold},
   JOURNAL = {J. Differential Geom.},
  FJOURNAL = {Journal of Differential Geometry},
    VOLUME = {29},
      YEAR = {1989},
    NUMBER = {2},
     PAGES = {421--425},
      ISSN = {0022-040X},
   MRCLASS = {58G99 (53C99)},
   MRNUMBER = {982183},
MRREVIEWER = {Robert Brooks},
       URL = {http://projecteuclid.org/euclid.jdg/1214442883},
}

@article {CheegerColdingMinicozzi,
    AUTHOR = {Cheeger, Jeff and Colding, Tobias H. and Minicozzi, II, William. P.},
     TITLE = {Linear growth harmonic functions on complete manifolds with
              nonnegative {R}icci curvature},
   JOURNAL = {Geom. Funct. Anal.},
  FJOURNAL = {Geometric and Functional Analysis},
    VOLUME = {5},
      YEAR = {1995},
    NUMBER = {6},
     PAGES = {948--954},
      ISSN = {1016-443X},
   MRCLASS = {53C21 (58G30)},
   MRNUMBER = {1361516},
MRREVIEWER = {Man Chun Leung},
       DOI = {10.1007/BF01902216},
       URL = {https://doi.org/10.1007/BF01902216},
}

@article{hua2016harmonic,
      title={Harmonic functions on metric measure spaces}, 
      author={Bobo Hua and Martin Kell and Chao Xia},
      year={2016},
      eprint={1308.3607},
      archivePrefix={arXiv},
      primaryClass={math.AP}
}

@article{honda2021sobolev,
    AUTHOR = {Honda, Shouhei and Sire, Yannick},
     TITLE = {Sobolev mappings between {RCD} spaces and applications to
              harmonic maps: a heat kernel approach},
   JOURNAL = {J. Geom. Anal.},
  FJOURNAL = {Journal of Geometric Analysis},
    VOLUME = {33},
      YEAR = {2023},
    NUMBER = {9},
     PAGES = {Paper No. 272, 87},
      ISSN = {1050-6926,1559-002X},
   MRCLASS = {53C21 (51K10 53C23)},
  MRNUMBER = {4603300},
MRREVIEWER = {Daniele\ Semola},
       DOI = {10.1007/s12220-023-01334-6},
       URL = {https://doi.org/10.1007/s12220-023-01334-6},
}

@article {AHlocal,
    AUTHOR = {Ambrosio, Luigi and Honda, Shouhei},
     TITLE = {Local spectral convergence in {${\rm RCD}^*(K,N)$} spaces},
   JOURNAL = {Nonlinear Anal.},
  FJOURNAL = {Nonlinear Analysis. Theory, Methods \& Applications. An
              International Multidisciplinary Journal},
    VOLUME = {177},
      YEAR = {2018},
    NUMBER = {part A},
     PAGES = {1--23},
      ISSN = {0362-546X},
   MRCLASS = {53C23 (53C21)},
  MRNUMBER = {3865185},
MRREVIEWER = {Fernando Galaz-Garc\'{\i}a},
       DOI = {10.1016/j.na.2017.04.003},
       URL = {https://doi.org/10.1016/j.na.2017.04.003},
}

@article {MenguyNonpolar,
    AUTHOR = {Menguy, X.},
     TITLE = {Examples of nonpolar limit spaces},
   JOURNAL = {Amer. J. Math.},
  FJOURNAL = {American Journal of Mathematics},
    VOLUME = {122},
      YEAR = {2000},
    NUMBER = {5},
     PAGES = {927--937},
      ISSN = {0002-9327},
   MRCLASS = {53C23},
   MRNUMBER = {1781925},
MRREVIEWER = {Andrea Sambusetti},
       URL ={http://muse.jhu.edu/journals/american_journal_of_mathematics/v122/122.5menguy.pdf},
}

@article {PanWeiHaus,
    AUTHOR = {Pan, Jiayin and Wei, Guofang},
     TITLE = {Examples of {R}icci limit spaces with non-integer {H}ausdorff
              dimension},
   JOURNAL = {Geom. Funct. Anal.},
  FJOURNAL = {Geometric and Functional Analysis},
    VOLUME = {32},
      YEAR = {2022},
    NUMBER = {3},
     PAGES = {676--685},
      ISSN = {1016-443X},
   MRCLASS = {53C23 (53C21)},
   MRNUMBER = {4431126},
MRREVIEWER = {Lina Chen},
       DOI = {10.1007/s00039-022-00598-4},
       URL = {https://doi.org/10.1007/s00039-022-00598-4},
}

@article {PetruninCD,
    AUTHOR = {Petrunin, Anton},
     TITLE = {Alexandrov meets {L}ott-{V}illani-{S}turm},
   JOURNAL = {M\"{u}nster J. Math.},
  FJOURNAL = {M\"{u}nster Journal of Mathematics},
    VOLUME = {4},
      YEAR = {2011},
     PAGES = {53--64},
      ISSN = {1867-5778},
   MRCLASS = {53C23},
   MRNUMBER = {2869253},
MRREVIEWER = {Asuka Takatsu},
}

@article {Isoregion,
    AUTHOR = {Antonelli, Gioacchino and Bru\`e, Elia and Fogagnolo, Mattia and
              Pozzetta, Marco},
     TITLE = {On the existence of isoperimetric regions in manifolds with
              nonnegative {R}icci curvature and {E}uclidean volume growth},
   JOURNAL = {Calc. Var. Partial Differential Equations},
  FJOURNAL = {Calculus of Variations and Partial Differential Equations},
    VOLUME = {61},
      YEAR = {2022},
    NUMBER = {2},
     PAGES = {Paper No. 77, 40},
      ISSN = {0944-2669},
   MRCLASS = {49Q20 (49J45 53C21 53C23)},
MRNUMBER = {4393128},
MRREVIEWER = {Fei-Tsen Liang},
       DOI = {10.1007/s00526-022-02193-9},
       URL = {https://doi.org/10.1007/s00526-022-02193-9},
}

@article{wangRCDsemilocal,
 AUTHOR = {Wang, Jikang},
     TITLE = {{${\rm RCD}^*(K,N)$} spaces are semi-locally simply connected},
   JOURNAL = {J. Reine Angew. Math.},
  FJOURNAL = {Journal f\"ur die Reine und Angewandte Mathematik. [Crelle's
              Journal]},
    VOLUME = {806},
      YEAR = {2024},
     PAGES = {1--7},
      ISSN = {0075-4102,1435-5345},
   MRCLASS = {53C23 (53C21)},
  MRNUMBER = {4685082},
       DOI = {10.1515/crelle-2023-0058},
       URL = {https://doi.org/10.1515/crelle-2023-0058},
}

@article {asdimsurvey,
    AUTHOR = {Bell, G. and Dranishnikov, A.},
     TITLE = {Asymptotic dimension},
   JOURNAL = {Topology Appl.},
  FJOURNAL = {Topology and its Applications},
    VOLUME = {155},
      YEAR = {2008},
    NUMBER = {12},
     PAGES = {1265--1296},
      ISSN = {0166-8641},
   MRCLASS = {55M10 (20F69)},
  MRNUMBER = {2423966},
MRREVIEWER = {Piotr W. Nowak},
       DOI = {10.1016/j.topol.2008.02.011},
       URL = {https://doi.org/10.1016/j.topol.2008.02.011},
}

@article {hypersphericityDra,
    AUTHOR = {Dranishnikov, A. N.},
     TITLE = {On hypersphericity of manifolds with finite asymptotic
              dimension},
   JOURNAL = {Trans. Amer. Math. Soc.},
  FJOURNAL = {Transactions of the American Mathematical Society},
    VOLUME = {355},
      YEAR = {2003},
    NUMBER = {1},
     PAGES = {155--167},
      ISSN = {0002-9947},
   MRCLASS = {53C23 (53C21)},
  MRNUMBER = {1928082},
MRREVIEWER = {Conrad Plaut},
       DOI = {10.1090/S0002-9947-02-03115-X},
       URL = {https://doi.org/10.1090/S0002-9947-02-03115-X},
}

@article {YamaguchiAlmostRic,
    AUTHOR = {Yamaguchi, Takao},
     TITLE = {Manifolds of almost nonnegative {R}icci curvature},
   JOURNAL = {J. Differential Geom.},
  FJOURNAL = {Journal of Differential Geometry},
    VOLUME = {28},
      YEAR = {1988},
    NUMBER = {1},
     PAGES = {157--167},
      ISSN = {0022-040X},
   MRCLASS = {53C20},
  MRNUMBER = {950560},
MRREVIEWER = {Viktor Schroeder},
       URL = {http://projecteuclid.org/euclid.jdg/1214442165},
}

@article {YamaguchiCollapsingPinching,
    AUTHOR = {Yamaguchi, Takao},
     TITLE = {Collapsing and pinching under a lower curvature bound},
   JOURNAL = {Ann. of Math. (2)},
  FJOURNAL = {Annals of Mathematics. Second Series},
    VOLUME = {133},
      YEAR = {1991},
    NUMBER = {2},
     PAGES = {317--357},
      ISSN = {0003-486X},
   MRCLASS = {53C23},
   MRNUMBER = {1097241},
MRREVIEWER = {Viktor Schroeder},
       DOI = {10.2307/2944340},
       URL = {https://doi.org/10.2307/2944340},
}

@article{deltasplitingRCD,
    AUTHOR = {Bru\`e, Elia and Pasqualetto, Enrico and Semola, Daniele},
     TITLE = {Rectifiability of {$\mathrm {RCD}(K,N)$} spaces via
              {$\delta$}-splitting maps},
   JOURNAL = {Ann. Fenn. Math.},
  FJOURNAL = {Annales Fennici Mathematici},
    VOLUME = {46},
      YEAR = {2021},
    NUMBER = {1},
     PAGES = {465--482},
      ISSN = {2737-0690},
   MRCLASS = {53C23 (31E05)},
  MRNUMBER = {4277822},
       DOI = {10.5186/aasfm.2021.4627},
       URL = {https://doi.org/10.5186/aasfm.2021.4627},
}

@article{gromov2020NoPcs5D,
      title={No metrics with Positive Scalar Curvatures on Aspherical 5-Manifolds}, 
      author={Misha Gromov},
      year={2020},
      eprint={2009.05332},
      archivePrefix={arXiv},
      primaryClass={math.DG}
}

@article{BingWangBetti,
      title={Rigidity of the first Betti number via Ricci flow smoothing}, 
      author={Shaosai Huang and Bing Wang},
      year={2021},
      eprint={2004.09762},
      archivePrefix={arXiv},
      primaryClass={math.DG}
}

@article {MondinoWeiUni,
    AUTHOR = {Mondino, Andrea and Wei, Guofang},
     TITLE = {On the universal cover and the fundamental group of an {${\rm RCD}^*(K,N)$}-space},
   JOURNAL = {J. Reine Angew. Math.},
  FJOURNAL = {Journal f\"{u}r die Reine und Angewandte Mathematik. [Crelle's
              Journal]},
    VOLUME = {753},
      YEAR = {2019},
     PAGES = {211--237},
      ISSN = {0075-4102},
   MRCLASS = {53C23 (57M10)},
  MRNUMBER = {3987869},
MRREVIEWER = {Luis Guijarro},
       DOI = {10.1515/crelle-2016-0068},
       URL = {https://doi.org/10.1515/crelle-2016-0068},
}

@article {MMP_Torus,
    AUTHOR = {Mondello, Ilaria and Mondino, Andrea and Perales, Raquel},
     TITLE = {An upper bound on the revised first {B}etti number and a torus
              stability result for {{RCD}} spaces},
   JOURNAL = {Comment. Math. Helv.},
  FJOURNAL = {Commentarii Mathematici Helvetici. A Journal of the Swiss
              Mathematical Society},
    VOLUME = {97},
      YEAR = {2022},
    NUMBER = {3},
     PAGES = {555--609},
      ISSN = {0010-2571},
   MRCLASS = {53C23 (53C21)},
  MRNUMBER = {4468994},
MRREVIEWER = {Wei Zhao},
       DOI = {10.4171/cmh/540},
       URL = {https://doi.org/10.4171/cmh/540},
}

@article {AndersonFibration,
    AUTHOR = {Anderson, Michael T.},
     TITLE = {Hausdorff perturbations of {R}icci-flat manifolds and the
              splitting theorem},
   JOURNAL = {Duke Math. J.},
  FJOURNAL = {Duke Mathematical Journal},
    VOLUME = {68},
      YEAR = {1992},
    NUMBER = {1},
     PAGES = {67--82},
      ISSN = {0012-7094},
   MRCLASS = {53C21 (53C20)},
  MRNUMBER = {1185818},
       DOI = {10.1215/S0012-7094-92-06803-7},
       URL = {https://doi.org/10.1215/S0012-7094-92-06803-7},
}

@article{ZhuZhu23,
     AUTHOR = {Zhu, Bo and Zhu, Xingyu},
     TITLE = {Optimal diameter estimate of three-dimensional {R}icci limit
              spaces},
   JOURNAL = {Proc. Amer. Math. Soc.},
  FJOURNAL = {Proceedings of the American Mathematical Society},
    VOLUME = {152},
      YEAR = {2024},
    NUMBER = {2},
     PAGES = {815--821},
      ISSN = {0002-9939,1088-6826},
   MRCLASS = {53C21},
  MRNUMBER = {4683860},
       DOI = {10.1090/proc/16529},
       URL = {https://doi.org/10.1090/proc/16529},
}

@article {Liu_3Dclassification,
    AUTHOR = {Liu, Gang},
     TITLE = {3-manifolds with nonnegative {R}icci curvature},
   JOURNAL = {Invent. Math.},
  FJOURNAL = {Inventiones Mathematicae},
    VOLUME = {193},
      YEAR = {2013},
    NUMBER = {2},
     PAGES = {367--375},
      ISSN = {0020-9910},
   MRCLASS = {53C20 (53A10 53C21)},
  MRNUMBER = {3090181},
MRREVIEWER = {David J. Wraith},
       DOI = {10.1007/s00222-012-0428-x},
       URL = {https://doi.org/10.1007/s00222-012-0428-x},
}

@article {ShioyaYamaguchi_collapsed3Dmanifolds,
    AUTHOR = {Shioya, Takashi and Yamaguchi, Takao},
     TITLE = {Volume collapsed three-manifolds with a lower curvature bound},
   JOURNAL = {Math. Ann.},
  FJOURNAL = {Mathematische Annalen},
    VOLUME = {333},
      YEAR = {2005},
    NUMBER = {1},
     PAGES = {131--155},
      ISSN = {0025-5831,1432-1807},
   MRCLASS = {53C21 (53C20 53C23 57N10)},
  MRNUMBER = {2169831},
MRREVIEWER = {David\ A.\ Glickenstein},
       DOI = {10.1007/s00208-005-0667-x},
       URL = {https://doi.org/10.1007/s00208-005-0667-x},
}

@article{panyesplitting,
      title={Nonnegative Ricci curvature, splitting at infinity, and first Betti number rigidity}, 
      author={Jiayin Pan and Zhu Ye},
      year={2024},
      eprint={2404.10145},
      archivePrefix={arXiv},
      primaryClass={math.DG},
      url={https://arxiv.org/abs/2404.10145}, 
}
\end{document}